\numberwithin{equation}{section}
\theoremstyle{plain}
\newtheorem{theorem}{Theorem}[section]
\newtheorem{lemma}[theorem]{Lemma}
\newtheorem{proposition}[theorem]{Proposition}
\theoremstyle{definition}
\newtheorem{definition}[theorem]{Definition}
\newtheorem{remark}[theorem]{Remark}
\def\beqn{\begin{equation}}
\def\beqn*{$$}
\def\eeqn{\end{equation}}
\def\P{\mathbb{P}}
\def\E{\mathbb{E}}
\newcommand{\reals}{{\mathbb R}}
\newcommand{\R}{\reals}
\newcommand{\bbn}{{\mathbb N}}
\newcommand{\vep}{\varepsilon}
\newcommand{\one}{{\mathbbm 1}}
\newcommand{\remove}[1]{}
\newcommand{\A}{\mathcal A}
\newcommand{\bp}{{\bf p}}
\newcommand{\al}{\alpha}
\newcommand{\U}{\mathcal U}
\newcommand{\taual}{{\tau_{m+1} + \alpha_{m+1}}}
\newcommand{\mH}{\mathcal H}
\newcommand{\mL}{\mathcal L}
\newcommand{\mT}{\mathcal T}
\newcommand{\ER}{Erd\H{o}s-R\'enyi }
\begin{document}

\bibliographystyle{abbrv}

\renewcommand{\baselinestretch}{1.05}

\title[Limit theorems for Betti numbers]
{Limit theorems for high-dimensional Betti numbers in the multiparameter random simplicial complexes}

\author{Takashi Owada}
\address{Department of Statistics\\
Purdue University \\
IN, 47907, USA}
\email{owada@purdue.edu}

\author{Gennady Samorodnitsky}
\address{School of Operations Research and Information Engineering\\
Cornell University \\
NY, 14853, USA}
\email{gs18@cornell.edu}

\thanks{Owada's research was partially supported by the AFOSR grant
  FA9550-22-1-0238 at Purdue University. Samorodnitsky's research was partially supported by the AFOSR grant
  FA9550-22-1-0091 at Cornell University.}

\subjclass[2010]{Primary 60F05, 60F10, 60F15.  Secondary 55U05, 60C05.}
\keywords{strong law of large numbers, central limit theorem, large deviation, Betti number, multiparameter random simplicial complex. \vspace{.5ex}}

\begin{abstract}
We consider the multiparameter random simplicial complex on a vertex
set $\{ 1,\dots,n \}$, which is parameterized by multiple connectivity
probabilities. Our key results concern the topology of this complex of
dimensions higher than the critical dimension. We show that the
higher-dimensional Betti numbers satisfy strong laws of large numbers
and central limit theorems. Moreover, lower tail large deviations for
these Betti numbers are also discussed. Some of our results indicate
an occurrence of phase transitions in terms of the scaling constants
of the central limit theorem, and  the exponentially decaying rate of
convergence of lower tail large  deviation probabilities. 
\end{abstract}

\maketitle

\section{Introduction}

The main theme of this paper is topological features of large
high-dimensional complex systems. A  large complex system of interest
to us is a random simplicial complex, in which minor changes in a
low-dimensional and local  structure may significantly affect
high-dimensional and global structure of  the entire system. The
random simplicial complex considered in this paper is a
higher-dimensional analog of the \ER random graph on $[n]:=\{
1,\dots,n \}$, where edges are added with probability $p$
independently of other potential edges. The literature contains
several higher-dimensional extensions of the \ER random graph. One is
the \emph{random clique  complex}, in which a set of vertices in $[n]$
forms a face (or a simplex) if and only if these vertices form a
clique in the \ER random graph (\cite{kahle:2009,
  thoppe:yogeshwaran:adler:2016}). The \emph{Linial--Meshulam complex} \cite{linial:meshulam:2006}
is a $2$-dimensional random simplicial complex, which starts with the
full $1$-dimensional skeleton of the complete simplicial complex and
adds each $2$-dimensional face, independently with probability
$p$. This complex was  extended to its $d$-dimensional version by
Meshulam and Wallach (\cite{meshulam:wallach:2009}). In this paper we
study the \emph{multiparameter random simplicial complex} introduced
by Costa and Farber (\cite{costa:farber:2016, costa:farber:2017}) as a
generalization of all the models described above. This is an abstract
simplicial complex on $[n]$, which is parameterized by a sequence of
connectivity probabilities $\bp = (p_1,p_2,\dots,p_{n-1}) \in
[0,1]^{n-1}$. One can construct this complex  recursively. First we
take $[n]$ as the $0$-dimensional skeleton and add edges between pairs
of vertices independently with probability $p_1$, just like in the
\ER random graph. For each $i=2,\dots,n-1$, an $i$-dimensional face
(henceforth we simply say ``$i$-face") is added with probability
$p_i$, independently of all other potential faces, conditioned on the
presence of all the $(i-1)$-faces  that constitute a boundary of that
$i$-face. The complex constructed  this way is denoted as $X([n],\bp)$. 

A number of studies of the multiparameter random simplicial complexes
have appeared recently, many of which focus on the asymptotic, as
$n\to\infty$, behavior of  topological invariants of the complex, such
as the \emph{Betti numbers} and the  \emph{Euler characteristic}. The
model typically contains a particular dimension in which the Betti
number  dominates the Betti numbers in other dimensions. This
dimension is  called the \emph{critical dimension}, and much of the
effort has been devoted to  the analysis of Betti numbers at the
critical dimension \cite{kahle:meckes:2013, thoppe:yogeshwaran:adler:2016,
  fowler:2019, owada:samorodnitsky:thoppe:2021, kanazawa:2022}. There
are also results on the Betti numbers in dimensions lower than the
critical dimension. For example, by  employing the cohomology
vanishing theorem \cite{ballmann:swiatkowski:1997} and the spectral
gap theorem \cite{hoffman:kahle:paquette:2021}, Fowler  proved that
the cohomology groups in dimensions lower than the critical dimension
vanish with high probability  (\cite{fowler:2019}). However, very
little has been done about the asymptotics, as $n\to\infty$,  of Betti
numbers in dimension higher  than the critical dimension, except for
certain special cases in the context of random clique  complexes; see
\cite{kahle:meckes:2013,  thoppe:yogeshwaran:adler:2016}. 

In this paper we derive various limit theorems for the Betti numbers of
dimension greater than the critical dimension.   We start by
establishing the strong law of large numbers (SLLN) and the central
limit theorem (CLT) for these high-dimensional  Betti numbers. The
obtained limit theorems characterize the  ``average" behavior and the
``likely deviations" from the average behavior  of these Betti
numbers.  We also derive a lower bound for the lower tail large deviation
probabilities for the Betti numbers of
dimension greater than the critical dimension.   

This paper is structured as follows. Section
\ref{sec:setup.main.results} provides a precise setup of our
multiparameter random simplicial complex. In this section we also
present the  SLLN, the  CLT, and large deviations results on Betti
numbers in dimensions greater than the critical dimension. Some of
our results indicate presence of phase  transitions  in terms of the
scaling constants in the CLT and the exponentially decaying rate of
convergence for the lower tail large deviation probabilities. The
proofs are in Section \ref{sec:proofs}, with some auxiliary results
deferred to the Appendix. 

We conclude this Introduction with several comments on our setup and
our results. We treat only the ``static" version of the multiparameter
simplicial complex, where the complex is sampled once and for
all. The questions we are pursuing in this paper make sense also for
the dynamic multiparameter simplicial  complexes considered in our previous work \cite{owada:samorodnitsky:thoppe:2021}  and in
\cite{thoppe:yogeshwaran:adler:2016, skraba:thoppe:yogeshwaran:2020,
  fraiman:mukherjee:thoppe:2023}. In particular, the models in \cite{thoppe:yogeshwaran:adler:2016, owada:samorodnitsky:thoppe:2021} involve on/off
processes to describe the state of various ingredients, so that the complex,
though distributionally stationary, may change with time. Since even
the ``static'' case is technically involved enough, we have decided
not to pursuit this direction in the present work. Further, in our
large  deviation results, we present only  ``lower  bound" for the lower
tail probabilities for the Betti numbers. We conjecture that the
logarithmic order of magnitude of our lower bounds in  Theorem
\ref{t:lower.LD} is the tightest possible, but we do not yet have
corresponding upper bounds to justify that conjecture. 
There are,  clearly, a number of other 
follow-up questions on large deviations. These questions are postponed to future work. 

Throughout the paper, we will use the following notation. The
cardinality of a set $A$ is denoted by $|A|$.  
 Given a sequence $(a_i)_{i=1}^\infty$ of real numbers, we denote
 $\bigvee_{i=1}^\infty a_i:=\sup_{i\ge1} a_i$ and
 $\bigwedge_{i=1}^\infty a_i := \inf_{i\ge1}a_i$.  Moreover,
 $\Rightarrow$  denotes weak convergence in $\R$, $\mathcal
 N(a,b^2)$ is a normal distribution with mean $a$ and variance $b^2$, 
and $\one \{  \cdot\}$ represents an indicator function. Finally, we
will use the capital letter $C$ and its versions (such as $C_j, \,
C^*$, etc) to denote finite positive constants whose exact values are 
not important and  can change from one appearance to the next. 

\section{Setup and main results} \label{sec:setup.main.results}

We start with a precise setup for the multiparameter random simplicial
complex $X([n],\bp)$ described in the previous section. 
As in the previous studies  \cite{costa:farber:2016,
  costa:farber:2017, fowler:2019, owada:samorodnitsky:thoppe:2021},
we choose the connectivity probabilities $\bp =
(p_1,p_2,\dots,p_{n-1})$  to be the following  functions of $n$: 
$$
p_i=n^{-\al_i}, \ \ i=1,2,\dots,n-1, 
$$
where each $\al_i\in [0,\infty]$ is a parameter. Given a sequence of parameters $(\al_1,\al_2,\dots,\al_{n-1})$, let 
\begin{equation}  \label{e:def.psi}
\psi_j := \sum_{i=1}^j \binom{j}{i} \al_i, \ \ j\ge1. 
\end{equation}
Define $q:= \min\{ i\ge1: \al_i>0 \}$, the smallest dimension $i$ such that $p_i$ goes to $0$ as $n\to\infty$. 
Notice that \eqref{e:def.psi} is increasing in the sense
of $\psi_i <  \psi_j$ for all $q\leq i< j$. We assume, as is common  in
the literature (\cite{costa:farber:2016, costa:farber:2017,
  fowler:2019, owada:samorodnitsky:thoppe:2021}), that  the system has
a \emph{critical}  dimension $k$, which is a dimension satisfying 
$$
0 < \psi_q < \dots <\psi_k < 1 < \psi_{k+1}< \dots, \ \ \text{with } \ k \ge q. 
$$
Next,  define 
$$
\tau_j := j+1 -\sum_{i=1}^j \psi_i = j+1 -\sum_{i=1}^j \binom{j+1}{i+1} \al_i, \ \ 0 \le j \le n-1
$$
(by convention we set $\sum_{i=1}^0\equiv 0$). Then, the monotonicity of $(\psi_j)$ ensures that 
\begin{equation}  \label{e:tau.maximized.at.crit}
q = \tau_{q-1} < \tau_q < \cdots < \tau_k > \tau_{k+1} > \cdots,
\end{equation}
 which also implies that  $(\tau_j)$ is maximized at the critical dimension $k$. Furthermore, $\tau_j$ can be negative for  large $j$.  

The main theme of this paper is the asymptotic behavior of the Betti numbers of dimension higher than the critical dimension, i.e.,  $\beta_{m,n}:= \beta_m \big( X([n], \bp) \big)$ for $m\ge k+1$. 
We obtain the SLLN and the CLT for $(\beta_{m,n})_{n\ge1}$. Moreover,
a lower bound for the lower tail large deviations for $(\beta_{m,n})_{n\ge1}$ will be derived as well. 
To state our main results,  fix $m\ge k+1$. We prove the the SLLN and
the CLT under the assumption 
\begin{equation}  \label{e:pos.assump}
\taual >0,  \ \  \text{ and } \  \ \al_{m+1}   \in (0,\infty). 
\end{equation}
Our first result  is the SLLN for $(\beta_{m,n})_{n\ge1}$.

\begin{theorem}     \label{t:SLLN}
Under the assumption \eqref{e:pos.assump}, we have, as $n\to\infty$, 
$$
\frac{\beta_{m,n}}{n^{\tau_{m+1}+\al_{m+1}}} \to \frac{1}{(m+2)!},  \ \ \text{a.s.}
$$
\end{theorem}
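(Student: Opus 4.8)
The plan is to bound $\beta_{m,n}$ between two quantities whose growth rate we can control, and whose gap is of lower order than $n^{\tau_{m+1}+\al_{m+1}}$. Recall the elementary inequality $\beta_m = \dim H_m \le \dim Z_m \le \dim C_m$, where $C_m$ is the $m$-th chain group, i.e.\ the number $f_m$ of $m$-faces in $X([n],\bp)$; and also the Morse-type bound $\beta_m \ge f_m - f_{m-1} - f_{m+1}$, which follows from $\beta_m = f_m - \operatorname{rank}\partial_m - \operatorname{rank}\partial_{m+1} \ge f_m - f_{m-1} - f_{m+1}$ since $\operatorname{rank}\partial_m \le f_{m-1}$ and $\operatorname{rank}\partial_{m+1} \le f_{m+1}$. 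So the whole problem reduces to understanding $f_j = f_j\big(X([n],\bp)\big)$, the number of $j$-faces, for $j = m-1, m, m+1$.

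The main computation is then the asymptotics of $\E[f_j]$ and the concentration of $f_j$. A $j$-face on a given $(j+1)$-subset of $[n]$ is present iff all of its lower-dimensional sub-faces are present and an independent coin of probability $p_j$ comes up heads; iterating the recursive construction, the probability that a fixed $(j+1)$-set spans a $j$-face is $\prod_{i=1}^{j} p_i^{\binom{j+1}{i+1}} = \prod_{i=1}^j n^{-\binom{j+1}{i+1}\al_i} = n^{-(j+1-\tau_j)}$ by the definition of $\tau_j$. Hence $\E[f_j] = \binom{n}{j+1} n^{\tau_j - (j+1)} \sim n^{\tau_j}/(j+1)!$. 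In particular $\E[f_m] \sim n^{\tau_m}/(m+1)!$. To match the claimed normalization I would then observe that under \eqref{e:pos.assump} one has $\tau_m = \tau_{m+1}+\psi_{m+1} = \tau_{m+1}+\sum_{i=1}^{m+1}\binom{m+1}{i}\al_i$, which is strictly larger than $\tau_{m+1}+\al_{m+1}$; wait — this says $f_m$ is of \emph{larger} order than the target, so the cancellation in $f_m - f_{m-1} - f_{m+1}$ must be genuine, and the true exponent $\tau_{m+1}+\al_{m+1}$ comes not from $f_m$ alone but from the discrepancy $f_m - \operatorname{rank}\partial_m - \operatorname{rank}\partial_{m+1}$. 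So the argument must instead be: $\operatorname{rank}\partial_{m+1}$ is, with high probability, equal to $f_{m+1} - (\text{number of } (m+1)\text{-cycles})$ and a careful accounting — essentially the one already developed in the critical-dimension literature \cite{fowler:2019, owada:samorodnitsky:thoppe:2021} — shows $\beta_{m,n} = f_{m,n} - f_{m-1,n} + \beta_{m-1,n} - (\text{correction})$, and the dominant surviving term is a count of "isolated" $m$-dimensional homology generators, each supported on an $(m+2)$-clique-like configuration, giving the combinatorial factor $1/(m+2)!$ and exponent $\tau_{m+1}+\al_{m+1}$. The precise identification of which local configurations contribute is the heart of the matter.

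Granting the correct first-order term $\E[\beta_{m,n}] \sim n^{\tau_{m+1}+\al_{m+1}}/(m+2)!$, the SLLN itself follows from a second-moment plus Borel--Cantelli argument: $\beta_{m,n}$ is a functional of finitely many independent face-indicators, and changing the status of one potential $i$-face changes $\beta_{m,n}$ by a bounded amount (bounded in terms of the faces it touches), so an Efron--Stein / bounded-differences estimate gives $\operatorname{Var}(\beta_{m,n}) = o\big(n^{2(\tau_{m+1}+\al_{m+1})}\big)$, in fact of strictly smaller polynomial order. Chebyshev along the subsequence $n_\ell = \lfloor \ell^{a}\rfloor$ for suitable $a$, combined with monotonicity / sandwiching to fill the gaps between consecutive $n_\ell$, upgrades convergence in probability to almost-sure convergence.

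The step I expect to be the main obstacle is the \emph{exact} evaluation of the leading asymptotics of $\E[\beta_{m,n}]$ — i.e.\ showing that after the massive cancellations in $f_m - \operatorname{rank}\partial_m - \operatorname{rank}\partial_{m+1}$ the surviving contribution is asymptotic to $n^{\tau_{m+1}+\al_{m+1}}/(m+2)!$ and not to something of the same order with a different constant or a lower-order correction that spoils the limit. This requires a delicate inclusion-exclusion over small subcomplexes, controlling the probability that a given small vertex set carries a nontrivial $m$-cycle that is not a boundary, and ruling out that larger configurations contribute at the same polynomial scale; the assumption $\taual>0$ is presumably exactly what guarantees the $(m+2)$-vertex configurations dominate, while $\al_{m+1}\in(0,\infty)$ ensures $p_{m+1}$ is neither $0$ nor bounded away from $0$ so these configurations are "just barely" present in the right number. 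The variance bound, by contrast, I expect to be routine given the bounded-difference structure.
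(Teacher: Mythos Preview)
Your proposal has a genuine gap at exactly the point you identify as ``the main obstacle'': you never actually produce a mechanism to extract the leading term $n^{\taual}/(m+2)!$ from $\beta_{m,n}$. You correctly recognize that the Morse bound $f_m-f_{m-1}-f_{m+1}\le\beta_{m,n}\le f_m$ is useless here, since $\E[f_m]\asymp n^{\tau_m}$ is of strictly larger order than $n^{\taual}$ (indeed $\tau_m-(\taual)=-1+\sum_{i=q}^m\binom{m+1}{i}\al_i>-1+\psi_m>0$). But your fallback --- computing $\operatorname{rank}\partial_m$ and $\operatorname{rank}\partial_{m+1}$ to enough precision that the difference $f_m-\operatorname{rank}\partial_m-\operatorname{rank}\partial_{m+1}$ survives with the correct constant --- is only a restatement of the problem, not a plan. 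Each of those three terms is of order $n^{\tau_m}$, and you would need to control all of them to within $o(n^{\taual})$; you give no indication of how to do that inclusion-exclusion, and the vague equation $\beta_{m,n}=f_{m,n}-f_{m-1,n}+\beta_{m-1,n}-(\text{correction})$ is not a usable identity.

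What the paper does instead is bypass face counts entirely and use an explicit representation (its Proposition~\ref{p:representation.Betti}): $\beta_{m,n}=\sum_{j\ge m+2}\sum_{r\ge1} r\,T_{j,r}$, where $T_{j,r}$ counts \emph{maximal} $m$-strongly connected subcomplexes on $j$ vertices with $m$th Betti number equal to $r$. The point is that this decomposition has no cancellation: the dominant term is $T_{m+2}$, counting empty $(m+1)$-simplex boundaries that are maximal, and one shows directly $T_{m+2}/n^{\taual}\to 1/(m+2)!$ a.s.\ while every $T_{j,r}$ with $j\ge m+3$ is $o(n^{\taual})$ a.s. The leading constant $1/(m+2)!$ then just comes from $\binom{n}{m+2}$, with no delicate subtraction.

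A secondary issue: your Efron--Stein/bounded-differences sketch is shaky in this model. Flipping the indicator of a single low-dimensional face (say a $q$-face) is \emph{not} a bounded perturbation of the complex, because in the hierarchical construction removing that face kills every higher-dimensional face containing it --- potentially polynomially many $m$-faces --- so the Lipschitz constant for $\beta_{m,n}$ with respect to that coordinate is not $O(1)$. The paper instead proves variance bounds directly for the pieces $T_{m+2}$, $S_j$, etc., by second-moment computations tailored to overlapping configurations, and then runs the subsequence/Borel--Cantelli argument (which you do sketch correctly) on those pieces via stochastic domination.
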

\begin{remark} \label{rk:SLLN.rk}
For comparison, the  SLLN for the Betti numbers at the critical
dimension $k$ was established in
\cite{owada:samorodnitsky:thoppe:2021}:  as $n\to\infty$,  
\begin{equation}  \label{e:SLLN}
\frac{\beta_{k,n}}{n^{\tau_k}} \to \frac{1}{(k+1)!}, \ \ \text{a.s.}
\end{equation}
The proof of \eqref{e:SLLN} heavily depends on the Morse inequality, 
$$
f_{k,n}-f_{k+1,n}-f_{k-1,n} \le \beta_{k,n} \le f_{k,n},  
$$
where $f_{j,n}$ denotes the simplex count of dimension $j$ in $X([n],
\bp)$. It was shown in \cite{owada:samorodnitsky:thoppe:2021}  that
$f_{k,n}$ dominates $f_{k\pm 1, n}$ as $n\to\infty$, at least in the
sense of expectations, and the SLLN for $(\beta_{k,n})_{n\ge1}$ was
obtained as a direct result of  the SLLN for $(f_{k,n})_{n\ge1}$. A
much more delicate argument is required in 
the current study,  and it is based on  an explicit representation of
the higher-dimensional  Betti numbers.  
\end{remark}
\medskip

Next, we state the CLT for $(\beta_{m,n})_{n\ge1}$ with $m\ge k+1$. It
requires a condition in addition to \eqref{e:pos.assump}. 
For $n\geq m+2$ and $A_1, A_2, A_3\subset [n]$ with $|A_i|=m+2$, set 
\begin{equation}  \label{e:def.lij.l123}
\ell_{ij} = |A_i \cap A_j|, \  \ 1\le i < j \le 3,  \ \ \ell_{123} = |A_1 \cap A_2 \cap A_3|
\end{equation}
and denote 
$$
\A_{m,k} := \big\{ (A_1,A_2,A_3)\subset [n]^3: |A_i|=m+2, \, i=1,2,3, \ \ell_{ij}\in \{ k+2,\dots,m+1  \}, \, 1\le i <j \le 3  \big\}. 
$$
We assume that 
\begin{equation}\label{e:CLT.cond}
\frac{3}{2}\big(\tau_q\wedge (\tau_{m+1}+\al_{m+1})\big) +\bigvee_{(A_1,A_2,A_3)\in \A_{m,k}} \Big\{ \tau_{\ell_{123}-1}-\tau_{\ell_{12}-1}-\tau_{\ell_{13}-1}-\tau_{\ell_{23}-1}   \Big\}<0. 
\end{equation}
Note that it is possible to avoid including a selection of sets from
$[n]$ as a part of the assumption. It is not difficult to check that
numbers $\ell_{12},\ell_{13},\ell_{23},\ell_{123}$ in
  \eqref{e:def.lij.l123} are feasible in
\eqref{e:CLT.cond} if and only if the following conditions hold:
\begin{align}
&\ell_{ij}\in \{ k+2,\dots,m+1  \}, \, 1\le i <j \le 3, \label{e:explcit.cond}\\
\notag &\max\bigl\{ \ell_{12}+\ell_{13},  \ell_{12}+\ell_{23},
  \ell_{13}+\ell_{23}\bigr\}-(m+2)\leq \ell_{123}\leq \min\bigl\{\ell_{12},\ell_{13},\ell_{23}\bigr\}.
\end{align}
Instead of referring to the restrictions \eqref{e:explcit.cond}, we
keep \eqref{e:CLT.cond} as a ``more operational'' formulation in the
proof.

\begin{theorem}   \label{t:CLT}
Assume conditions \eqref{e:pos.assump} and \eqref{e:CLT.cond}. \\
$(i)$ If $\taual >\tau_q$, then, 
$$
\frac{\beta_{m,n}-\E[\beta_{m,n}]}{n^{\taual -\tau_q/2}} \Rightarrow \mathcal N \big( 0,(q+1)! ((m-q+1)!)^2 \big), \ \ \text{as } n\to\infty. 
$$ 
$(ii)$  If $\taual <\tau_q$, then, 
$$
\frac{\beta_{m,n}-\E[\beta_{m,n}]}{n^{\frac{1}{2}(\taual)}} \Rightarrow \mathcal N ( 0,(m+2)! ), \ \ \text{as } n\to\infty. 
$$
$(iii)$ If $\taual =\tau_q$, then, 
$$
\frac{\beta_{m,n}-\E[\beta_{m,n}]}{n^{\frac{1}{2}(\taual)}} \Rightarrow \mathcal N \left( 0, \Big(\frac{1}{(q+1)! ((m-q+1)!)^2} + \frac{1}{(m+2)!}  \Big)^{-1} \right), \ \ \text{as } n\to\infty. 
$$
\end{theorem}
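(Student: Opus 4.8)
The plan is to reduce both limit theorems to a central limit theorem for a $U$-statistic-type sum and then to carry out a careful second-moment analysis. I would start from the explicit representation of $\beta_{m,n}$ that underlies the proof of Theorem~\ref{t:SLLN}, writing
\[
\beta_{m,n} \;=\; \Phi_n + \Delta_n, \qquad \Phi_n \;=\; \sum_{\substack{A\subset[n]\\ |A|=m+2}} \xi_A,
\]
where $\xi_A$ is a bounded functional of the restriction of $X([n],\bp)$ to $A$ — which one should think of as the indicator that $A$ supports a full $m$-skeleton with its $(m+1)$-face absent, up to lower-order corrections — and $\Delta_n$ is a remainder. The first step is to show that $\Delta_n/a_n\to 0$ in $L^2$, where $a_n$ is the case-dependent normalization ($n^{\taual-\tau_q/2}$ in $(i)$ and $n^{\frac12(\taual)}$ in $(ii)$--$(iii)$); this uses the moment bounds of the Appendix together with \eqref{e:pos.assump}, and reduces all three statements to a CLT for $\Phi_n$.

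The heart of the matter is the asymptotics of $\mathrm{Var}(\Phi_n)=\sum_A\mathrm{Var}(\xi_A)+\sum_{A\neq B}\mathrm{Cov}(\xi_A,\xi_B)$, organized by the overlap $|A\cap B|$. Pairs with $|A\cap B|\le q$ are independent, since their shared faces have dimension $<q$ and are therefore present deterministically; they contribute nothing. The diagonal produces a ``Poisson'' term of order $n^{\taual}$. The leading dependence comes from pairs with $|A\cap B|=q+1$, which share a single $q$-simplex present with probability of order $n^{-\al_q}=n^{\tau_q-q-1}$, and summing over such pairs yields a term of order $n^{2(\taual)-\tau_q}$. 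The remaining overlaps $q+2\le|A\cap B|\le m+1$ — exactly the intersection patterns recorded by $\A_{m,k}$ and constrained as in \eqref{e:explcit.cond} — must be shown to be of strictly smaller order, and it is precisely here that \eqref{e:CLT.cond} is used to bound the exponents built from the $\tau_{\ell-1}$'s (this is delicate because $\tau_j$ can be small, even negative, for large $j$, so a naive bound on the cost of a large shared sub-skeleton is insufficient). Comparing the surviving scales $n^{\taual}$ and $n^{2(\taual)-\tau_q}$ produces the trichotomy; in case $(iii)$ the two contributions are of the same order, and it is their interference that produces the inverse-sum constant in the statement.

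To upgrade the variance asymptotics to convergence in distribution I would argue differently in the two regimes. In the Poisson regime $(ii)$, $\Phi_n$ is a sum indexed by a sparse dependency graph, and asymptotic normality follows from a Stein/dependency-graph bound (of Baldi--Rinott type) once the third-order contribution of triples in $\A_{m,k}$ has been checked, via \eqref{e:CLT.cond}, to be $o(a_n^3)$. In the regimes $(i)$ and $(iii)$ the fluctuation is carried by the $q$-skeleton: conditioning on the $\sigma$-field $\G_q$ generated by the $q$-simplices, one shows that $\E[\Phi_n\mid\G_q]-\E\Phi_n$ is, to leading order, a linear statistic of the i.i.d.\ Bernoulli $q$-simplex indicators and hence asymptotically Gaussian by the Lindeberg CLT, while the conditional fluctuation $\Phi_n-\E[\Phi_n\mid\G_q]$ is of smaller order in case $(i)$ and of the same order in case $(iii)$; in the latter case one must verify that the two uncorrelated pieces combine in the limit into a single Gaussian with the asserted variance, by checking that the relevant mixed cumulants vanish. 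Equivalently, the whole CLT can be carried out by the method of moments: the normalized odd cumulants of $\beta_{m,n}$ vanish and the even ones factorize through pair overlaps, with \eqref{e:CLT.cond} serving as the third-cumulant condition.

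The step I expect to be hardest is the second-moment computation together with the control of $\Delta_n$: pinning down, with exact constants, that no overlap pattern other than a single shared $q$-simplex survives at leading order, and — in case $(iii)$ — identifying precisely how the $q$-skeleton-driven fluctuation and the ``Poisson'' fluctuation interact so as to give the harmonic-type limiting variance. The Stein/dependency-graph and Lindeberg inputs, and the feasibility bookkeeping \eqref{e:explcit.cond}, are comparatively routine once the variance is understood.
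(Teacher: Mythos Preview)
Your overall architecture matches the paper's: write $\beta_{m,n}$ as the minimal-cycle count on $(m+2)$-sets (your $\Phi_n$, the paper's $T_{m+2}$, approximated by $S_{m+2}$) plus a remainder, kill the remainder in $L^2$, prove a CLT for the leading sum, and read off the trichotomy from the variance asymptotics. The variance outcome you describe---diagonal term $\asymp n^{\taual}$ versus the $\ell=q+1$ overlap term $\asymp n^{2(\taual)-\tau_q}$---is exactly Lemma~\ref{l:variance.asym1}.

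There is, however, a real confusion about the role of \eqref{e:CLT.cond}. You write that the intermediate pair overlaps $q+2\le |A\cap B|\le m+1$ are ``exactly the intersection patterns recorded by $\A_{m,k}$'' and that \eqref{e:CLT.cond} is what shows they are of smaller order in the variance. Neither is right. The set $\A_{m,k}$ consists of \emph{triples}, with all three pairwise overlaps in $\{k+2,\dots,m+1\}$ (note $k$, not $q$), and the pairwise variance analysis does not use \eqref{e:CLT.cond} at all: the overlap-$\ell$ contribution is $\sim c_\ell\, n^{2(\taual)-\tau_{\ell-1}}$, the unimodality of $(\tau_j)$ forces the maximum over $\ell\in\{q+1,\dots,m+1\}$ to an endpoint, and the elementary inequality \eqref{e:taum.and.taum+1}, $\tau_m>\taual$, absorbs the large-$\ell$ endpoint into the diagonal. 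Only \eqref{e:pos.assump} is used here. The actual use of \eqref{e:CLT.cond} is third-order: the paper proves the CLT for $S_{m+2}$ in all three regimes at once via the Barbour--Karo\'nski--Ruci\'nski Stein bound, which requires controlling sums over triples $(\alpha,\beta,\gamma)$ in the dependency neighbourhood. Triples with some pairwise overlap $\le k+1$ are handled by the monotonicity $\tau_{\ell_{23}-1}\ge\tau_{\ell_{123}-1}$ below the critical dimension; it is precisely the triples with \emph{all} three overlaps in $\{k+2,\dots,m+1\}$---i.e., $\A_{m,k}$---that force the extra hypothesis \eqref{e:CLT.cond}. Your later remark that \eqref{e:CLT.cond} is ``the third-cumulant condition'' is the correct reading; your earlier attribution to the variance step is not.

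Two smaller points. Your case-dependent scheme (condition on $\G_q$ and apply Lindeberg in $(i)$/$(iii)$, dependency-graph Stein only in $(ii)$) is a legitimate alternative to the paper's uniform Stein argument, but you never say where \eqref{e:CLT.cond} would enter in cases $(i)$ and $(iii)$; if your route truly avoids it there, that is worth stating, and if not, the mechanism is missing. And there is no ``interference'' or harmonic-mean phenomenon in case $(iii)$: the $q$-skeleton-driven piece and the diagonal piece are uncorrelated and their variances simply \emph{add} (see \eqref{e:Var.S.m+2}), so once joint asymptotic normality is established the limiting variance is just the sum of the two leading constants.
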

\begin{remark}
It is not difficult to verify that in some cases the conditions
\eqref{e:pos.assump} and \eqref{e:CLT.cond} hold at the same
time. For example, if $m=k+1$, then  \eqref{e:CLT.cond} always holds!
Indeed, we know by \eqref{e:explcit.cond} that, in this case,  
$\ell_{12}=\ell_{13}=\ell_{23}=k+2$ and $\ell_{123}\in
\{k+1,k+2\}$. Since $\tau_{k+1}< \tau_k$, one can upper
  bound \eqref{e:CLT.cond} by 
\begin{align*}
&\frac32 (\tau_{k+2}+\al_{k+2} )+\tau_k-3\tau_{k+1}
=\frac32 (\tau_{k}+2-\psi_{k+1}-\psi_{k+2}+\al_{k+2} )+\tau_k-3(\tau_{k}+1-\psi_{k+1})\\
=&-\frac12 \tau_k -\frac32 \left[  \psi_{k+2} -\alpha_{k+2}-\psi_{k+1}\right]
= -\frac12 \tau_k -\frac32  \sum_{i=1}^{k+1} \left( {k+2\choose
   i} - {k+1\choose i}\right)\alpha_i<0.
\end{align*}

On the other hand, the assumption \eqref{e:CLT.cond} is not
vacuous. A simple example of a situation where \eqref{e:CLT.cond} does
not hold can be constructed by taking 
$\ell_{12}=\ell_{13}=\ell_{23}=m+1$, $\ell_{123}=m$ for  a
  large even $m$, and choosing a
large $\alpha_{m/2}$. 
\end{remark}

It is shown in \cite{owada:samorodnitsky:thoppe:2021} that the Betti number in the critical dimension satisfies 
$$
\frac{\beta_{k,n}-\E[\beta_{k,n}]}{n^{\tau_k -\tau_q/2}} \Rightarrow
\mathcal N \big( 0,(q+1)! ((k-q)!)^2 \big), \ \ \text{as } n\to\infty, 
$$
so the power of $n$ in  the scaling depends directly on the value of
$\tau_k$. In contrast, in Theorem \ref{t:CLT}  the power of $n$ in the
scaling depends on  $\tau_{m+1}+\alpha_{m+1}$. Furthermore,  Theorem
\ref{t:CLT}  describes a phase transition not present in the critical
dimension. 
\medskip

Our final result takes a step towards understanding the lower tail
large deviations  for $(\beta_{m,n})_{n\ge1}$ with $m\ge k+1$.  
In contrast to other limit theorems, such as the SLLN and CLT
(\cite{kanazawa:2022, owada:samorodnitsky:thoppe:2021,
  thoppe:yogeshwaran:adler:2016, kahle:meckes:2013}), very little is
known about large deviations for  Betti numbers in the multiparameter
simplicial complex or versions of that model. 
The only exception is \cite{samorodnitsky:owada:2023}, which
investigates the upper tail large deviations for the Betti number in the critical dimension. 
In the result below, we are able to derive a lower bound for lower tail large
deviation probabilities for $(\beta_{m,n})_{n\ge1}$. 

\begin{theorem}  \label{t:lower.LD}
Assume condition \eqref{e:pos.assump}. Let $\vep\in (0,1)$. \\
$(i)$ If $\taual \ge \tau_q$,
$$
\liminf_{n\to \infty} \frac{1}{n^{\tau_q}} \log \P \Big( \beta_{m,n}
\le (1-\vep)\, \frac{n^\taual}{(m+2)!} \Big) \ge  -C_{m,q}\vep^2, 
$$
where 
\begin{equation}\label{e:def.C.epsilon}
C_{m,q} = \frac{5^4\cdot 2^{2(m+6)-q} \vee 2^{2(m+5)}}{(q+1)!}. 
\end{equation}
$(ii)$ If $\taual <\tau_q$, 
$$
\liminf_{n\to \infty} \frac{1}{n^{\taual}} \log \P \Big( \beta_{m,n} \le (1-\vep)\, \frac{n^\taual}{(m+2)!} \Big) \ge -\frac{5000}{(m+2)!}\, \vep^2. 
$$
\end{theorem}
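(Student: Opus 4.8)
\medskip
\noindent\emph{Proof plan for Theorem~\ref{t:lower.LD}.}
The backbone is the explicit representation of $\beta_{m,n}$ established while proving Theorem~\ref{t:SLLN}: up to a nonnegative remainder $\mathcal R_n$ counting ``bad'' overlapping configurations, $\beta_{m,n}$ coincides with the number of \emph{empty $(m+1)$-simplices}, namely $W_n:=\sum_{|A|=m+2}\zeta_A$, where $\zeta_A$ indicates that the $(m+2)$-set $A\subseteq[n]$ carries all of its $m$-faces but not the $(m+1)$-face. Each $\zeta_A$ depends only on faces of dimensions $q,\dots,m+1$ spanned by $A$, $\mu:=\E[W_n]\sim n^{\taual}/(m+2)!$, and $|\beta_{m,n}-W_n|\le\mathcal R_n$ with $\E[\mathcal R_n]=o\bigl(n^{\tau_q\wedge \taual}\bigr)$. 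Since the events constructed below will not inflate $\E[\mathcal R_n]$ by more than a constant factor, a Markov step for $\mathcal R_n$ reduces both parts to lower-bounding, for each fixed $\vep\in(0,1)$, the probability $\P\bigl(W_n\le(1-c\vep)\,n^{\taual}/(m+2)!\bigr)$ for a suitable fixed $c<1$ (the slack $c\vep$ vs.\ $\vep$ absorbs the remainder and the gap between $\mu$ and $n^{\taual}/(m+2)!$); this only changes the numerical constants $C_{m,q}$, $5000$.

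For part $(i)$, where $\tau_q\le\taual$, I would deplete the lowest nontrivial stratum of faces by a \emph{change of measure} rather than by conditioning. Let $Q$ be the law of $X([n],\bp)$ with $p_q=n^{-\al_q}$ replaced by $(1-\delta)\,n^{-\al_q}$, where $\delta=\delta(\vep,m,q)$ is of order $\vep$ and chosen so that $\E_Q[W_n]=(1-\delta)^{\binom{m+2}{q+1}}\mu\le(1-2c\vep)\,n^{\taual}/(m+2)!$ for large $n$ (possible because $\zeta_A$ simply scales its $q$-face factor). Since $\mathrm{Var}_Q(W_n)=o(\mu^2)$ by the second-moment estimates behind Theorem~\ref{t:CLT}, Chebyshev's inequality and a Markov step for $\mathcal R_n$ give $\P_Q\bigl(\beta_{m,n}\le(1-c\vep)\,n^{\taual}/(m+2)!\bigr)\ge\tfrac14$ for $n$ large. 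Transferring back via the likelihood ratio and Jensen's inequality yields $\P\bigl(\beta_{m,n}\le(1-c\vep)\,n^{\taual}/(m+2)!\bigr)\ge\tfrac14\exp\bigl(-(1+o(1))\,D(Q\,\|\,P)\bigr)$, where $D(Q\,\|\,P)=\binom{n}{q+1}\,D\bigl(\mathrm{Ber}((1-\delta)n^{-\al_q})\,\|\,\mathrm{Ber}(n^{-\al_q})\bigr)$ is of order $\delta^2\binom{n}{q+1}n^{-\al_q}\sim\tfrac{\delta^2}{2}\cdot\tfrac{n^{\tau_q}}{(q+1)!}$. Taking logarithms, dividing by $n^{\tau_q}$, using $\delta=O(\vep)$, and absorbing all numerical constants (and the factor $\tfrac14$, harmless in the $\liminf$) into $C_{m,q}$ as in \eqref{e:def.C.epsilon} gives part $(i)$.

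For part $(ii)$, where $\taual<\tau_q$, no stratum of faces is a bottleneck: in fact $\tau_j>\taual$ for \emph{every} $j\in\{q,\dots,m\}$, since $\tau_m-\taual=\sum_{i=1}^{m}\binom{m+1}{i}\al_i-1\ge\psi_{k+1}-1>0$ (using $m\ge k+1$) and the remaining $\tau_j$ are at least $\tau_q\wedge\tau_m$. Hence any tilt of the $p_i$'s of the kind used in part $(i)$ would cost a power of $n$ strictly exceeding $\taual$, and the deviation must originate in $W_n$ itself. The decisive feature — read off from the same clustering estimates that drive Theorem~\ref{t:CLT}$(ii)$ — is that in this regime a typical face of dimension $j\in\{q,\dots,m\}$ lies in only $o(1)$ empty simplices (its empty-simplex degree is of order $n^{\taual-\tau_{j}}\to0$); consequently $\{\zeta_A\}$ is asymptotically dissociated, $\mathrm{Var}(W_n)\sim\mu$, and $W_n$ is close to $\mathrm{Poisson}(\mu)$. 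I would upgrade this to a Cram\'er-type \emph{lower} bound on the lower tail of $W_n$ — e.g.\ by decomposing $W_n$, up to a term that is $o(\vep\mu)$ with overwhelming probability, into a sum of independent ``cluster'' contributions and applying the Cram\'er lower bound, or equivalently by bounding $\log\E[e^{-\lambda W_n}]$ from below for $\lambda\in(0,-\log(1-c\vep))$ — to obtain $\P\bigl(W_n\le(1-c\vep)\,n^{\taual}/(m+2)!\bigr)\ge\exp\bigl(-(1+o(1))\,\mu\,I(1-c\vep)\bigr)$, where $I(1-s)=(1-s)\log(1-s)+s<s^2$. Since $\mu\le(1+o(1))\,n^{\taual}/(m+2)!$ and $I(1-c\vep)\le\vep^2$, the constant $5000$ leaves ample room, proving part $(ii)$.

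I expect part $(ii)$ to be the main obstacle. Total-variation or Chen--Stein closeness of $W_n$ to a Poisson variable is \emph{not} enough here: the coupling error decays only polynomially in $n$ whereas the target lower-tail probability decays exponentially, so it does not transfer. One genuinely needs either an explicit decomposition of $W_n$ into (nearly) independent pieces — which forces one to quantify how rare and how large the clusters of mutually face-sharing empty simplices can be in this regime — or a direct lower bound on the lower-tail mass of $W_n$ via $\E[e^{-\lambda W_n}]$. A secondary difficulty is bookkeeping: verifying that the SLLN representation and the bound $\E[\mathcal R_n]=o(n^{\tau_q\wedge\taual})$ are stable under the change of measure in part $(i)$, and that the variance estimates $\mathrm{Var}_Q(W_n)=o(\mu^2)$ and $\mathrm{Var}(W_n)\sim\mu$ used above really are consequences of the moment computations already carried out for Theorem~\ref{t:CLT}.
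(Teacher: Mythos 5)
Your part (i) is a genuinely different and essentially viable route: instead of the paper's combination of FKG, the transfer estimate of Proposition \ref{p:small.q.face} (reducing the lower tail of the cycle count to the lower tail of the $q$-face count on half the vertices) and Theorem 2 of \cite{janson:warnke:2016}, you tilt $p_q$ down by a factor $1-\delta$, $\delta=O(\vep)$, run Chebyshev under the tilted law $Q$, and pay the relative entropy $\binom{n}{q+1}D(\mathrm{Ber}((1-\delta)p_q)\,\|\,\mathrm{Ber}(p_q))\lesssim \delta^2 n^{\tau_q}/(q+1)!$. Done carefully (the change-of-measure inequality carries a factor $1/\P_Q(A)$, but Chebyshev under $Q$ in fact gives $\P_Q(A)\to1$, and the remainder $\mathcal R_n$ can be absorbed under $Q$ by Markov since under $Q$ both events are typical), this yields a constant of order $\vep^2/(q+1)!$, comfortably inside $C_{m,q}$ of \eqref{e:def.C.epsilon}; the paper's crude ``no $q$-faces at all'' branch is just your $\delta=1$. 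So part (i) would go through, and arguably more cleanly than in the paper.

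Part (ii), however, has two genuine gaps. First, the upfront reduction from $\beta_{m,n}$ to $W_n$ via ``a Markov step for $\mathcal R_n$'' does not work under the original measure: Markov only gives $\P(\mathcal R_n>\eta\, n^{\taual}/(m+2)!)\lesssim n^{1-\psi_m}$, a polynomial rate, while the target event $\{W_n\le(1-c\vep)n^{\taual}/(m+2)!\}$ has probability $e^{-c'\vep^2 n^{\taual}}$; the bound $\P(A\cap B)\ge\P(A)-\P(B^c)$ is then vacuous, which is exactly the transfer obstruction you yourself point out for Chen--Stein. The paper's resolution is structural: it bounds $T_{m+2}\le S_{m+2}$ and the $j\ge m+3$ terms of \eqref{e:repre.Betti} by the \emph{increasing} counts $S_j$ and $R_{D+m+1}$, so that all three events are decreasing in the Bernoulli configuration and the FKG inequality factorizes the joint probability (Proposition \ref{p:lower.bdd.with.leading.term}). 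Your $W_n$ insists that the $(m+1)$-face be absent, hence is not monotone and FKG is unavailable; you would need to drop that requirement (as the paper does with $S_{m+2}$, whose mean is still $\sim n^{\taual}/(m+2)!$). Second, the decisive estimate of part (ii) --- an exponential \emph{lower} bound $\P(W_n\le(1-c\vep)\mu)\ge\exp(-(1+o(1))\mu I(1-c\vep))$ for the weakly dependent count in the regime $\taual<\tau_q$ --- is only sketched (cluster decomposition, or a lower bound on $\log\E[e^{-\lambda W_n}]$), not proved; this is precisely where the paper invokes Theorem 2 of \cite{janson:warnke:2016}, which gives $\P(X\le(1-\vep)\E X)\ge\exp\{-\frac{5000}{(1-\Pi)^5}\vep^2\mu(1+\delta)\}$ for sums of increasing indicators, applied to $S_{m+2}$ after checking $\delta(X_m)\to0$ via Lemma \ref{l:Lambda.Delta} (this is also the source of the constant $5000$). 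As written, the hardest step of the theorem is left as a conjecture, so the proposal does not yet prove part (ii).
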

\medskip

\section{Proofs}   \label{sec:proofs}
We prove Theorem \ref{t:lower.LD} first, as its proof requires
developing certain new techniques that are best seen first. The
proofs of Theorems \ref{t:SLLN} and \ref{t:CLT} are postponed to later
sections.

\subsection{Preliminaries}

Our proof  relies on an explicit representation of  Betti
numbers. Recall  that an $\ell$-dimensional complex $X$ is said
to be \emph{pure} if every face of $X$ is contained in an $\ell$-face.  

\begin{definition}  \label{def.strong.conn.type1}
Given a positive integer $\ell$, a simplicial complex $K$ is called
\emph{strongly connected} of order $\ell$ (we abbreviate it as ``$\ell$-strongly connected") if 
\vspace{3pt}

\noindent \textbf{$(i)$} every pair of $\ell$-faces $\sigma, \tau \in K$ can be connected by a sequence of $\ell$-faces 
$$
\sigma = \sigma_0, \sigma_1, \dots, \sigma_{j-1}, \sigma_j = \tau, 
$$
such that dim$(\sigma_i \cap \sigma_{i+1})=\ell-1$ for each $0\le i \le j-1$. \\
\noindent \textbf{$(ii)$} the $\ell$-skeleton of $K$ is pure. 
\end{definition}

Note that the dimension of $K$ itself can be greater than
$\ell$. Given a simplicial complex $X$, let $K$ be an $\ell$-strongly
connected subcomplex of $X$. We say that $K$ is \emph{maximal} if
there is no $\ell$-strongly connected subcomplex $K'\subset X$
containing $K$ as a strict subset. 

\begin{proposition}[Proposition A.6 in \cite{owada:samorodnitsky:thoppe:2021}]   \label{p:representation.Betti}
For every $m\ge1$, 
\begin{equation}  \label{e:representation.Betti}
\beta_{m,n} = \beta_m \big( X([n], \bp) \big) = \sum_{j=m+2}^n \sum_{r\ge1}\sum_{\sigma \subset [n], \, |\sigma|=j} r \eta_\sigma^{(j,r,m)}, 
\end{equation}
where $\eta_\sigma^{(j,r,m)}$ is the indicator function of the event
that $X(\sigma, \bp)$, representing the restriction of $X([n], \bp)$ to $\sigma$,
forms a maximal $m$-strongly connected subcomplex in 
$X([n], \bp)$ such that $\beta_m \big( X(\sigma, \bp) \big)=r$.  
\end{proposition}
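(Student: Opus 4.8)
The plan is to compute $\beta_{m,n}$ through the simplicial chain complex of the $(m+1)$-skeleton of $X([n],\bp)$, which carries the same homology in degree $m$. Working with rational coefficients we have $\beta_m=\dim Z_m-\dim B_m$, where $Z_m=\ker\partial_m$ and $B_m=\operatorname{im}\partial_{m+1}$ for the simplicial boundary maps $\partial_m,\partial_{m+1}$, and the goal is to show that both $Z_m$ and $B_m$ split as direct sums indexed by the maximal $m$-strongly connected subcomplexes of $X([n],\bp)$. To set this up I would first call two $m$-faces equivalent when they are joined by a chain of $m$-faces in which consecutive ones intersect in an $(m-1)$-face; this is an equivalence relation whose classes $E_1,\dots,E_s$ are, by Definition \ref{def.strong.conn.type1} and maximality, exactly the sets of $m$-faces of the maximal $m$-strongly connected subcomplexes $K_1,\dots,K_s$ (each $K_i$ consisting of all subfaces of the $m$-faces in $E_i$ together with every higher-dimensional face of $X([n],\bp)$ whose $m$-subfaces all lie in $E_i$, and nothing more). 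Since the $m$-subfaces of any $(m+1)$-face pairwise intersect in $(m-1)$-faces, every $(m+1)$-face — just like every $m$-face — belongs to exactly one class; hence $C_m=\bigoplus_i C_m(K_i)$ and $C_{m+1}=\bigoplus_i C_{m+1}(K_i)$, and $\partial_{m+1}$ carries $C_{m+1}(K_i)$ into $C_m(K_i)$. This already yields $B_m=\bigoplus_i B_m(K_i)$ and so $\dim B_m=\sum_i\dim B_m(K_i)$.

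The core step — and the one I expect to be the main obstacle — is the matching splitting of $Z_m$, because $\partial_m$ lands in $C_{m-1}$, which is not split by the $K_i$ (an $(m-1)$-face can be a face of several components). The key point is that two $m$-faces lying in different classes can intersect only in a face of dimension at most $m-2$: if they shared an $(m-1)$-face they would be adjacent, hence in the same class. Consequently the set of $(m-1)$-faces appearing in the boundary of some $m$-face of $K_i$ is disjoint from the corresponding set for $K_j$ whenever $i\neq j$, so the subspaces of $C_{m-1}$ these sets span are in direct sum. Writing a cycle $z\in Z_m$ uniquely as $z=\sum_i z_i$ with $z_i\in C_m(K_i)$, the chains $\partial_m z_i$ then have pairwise disjoint supports, so $\partial_m z=0$ forces $\partial_m z_i=0$ for every $i$; that is, $Z_m=\bigoplus_i Z_m(K_i)$ and $\dim Z_m=\sum_i\dim Z_m(K_i)$. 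Combining, $\beta_{m,n}=\dim Z_m-\dim B_m=\sum_{i=1}^s\big(\dim Z_m(K_i)-\dim B_m(K_i)\big)=\sum_{i=1}^s\beta_m(K_i)$ for every $m\ge1$; for $m=1$ this is just the additivity of $\beta_1$ over connected components.

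Finally I would rewrite $\sum_i\beta_m(K_i)$ in the indexed form \eqref{e:representation.Betti}. Each maximal $m$-strongly connected subcomplex is supported on, and is determined by, the vertex set $\sigma\subseteq[n]$ it spans; grouping the $K_i$ by this vertex set and letting $\eta_\sigma^{(j,r,m)}$ be the indicator that the restriction $X(\sigma,\bp)$ forms such a maximal $m$-strongly connected subcomplex with $|\sigma|=j$ and $\beta_m=r$ turns $\sum_i\beta_m(K_i)$ into $\sum_j\sum_{r\ge1}\sum_{|\sigma|=j}r\,\eta_\sigma^{(j,r,m)}$. To pin down the range of the outer sum, note that a component with $\beta_m(K_i)\ge1$ carries a nonzero simplicial $m$-cycle, and a nonzero $m$-cycle cannot be supported on $m+1$ or fewer vertices — on that many vertices there is at most one $m$-face, and the boundary of a single $m$-face with nonzero coefficient does not vanish — so its support, and hence the component, spans at least $m+2$ vertices. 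Thus components on at most $m+1$ vertices contribute $0$ and the outer sum may be started at $j=m+2$, which is exactly \eqref{e:representation.Betti}.
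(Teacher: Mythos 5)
You are proving a statement that this paper does not actually prove: Proposition \ref{p:representation.Betti} is imported verbatim from Proposition A.6 of \cite{owada:samorodnitsky:thoppe:2021}, so there is no in-paper argument to compare against; I can only judge your proposal on its own terms. The homological core of your argument is correct and is surely the intended mechanism: partitioning the $m$-faces into adjacency classes $E_1,\dots,E_s$, observing that every $(m+1)$-face has all its $m$-subfaces in a single class so that $C_{m+1}$ and hence $B_m$ split, and using the fact that $m$-faces from different classes share no $(m-1)$-face to get disjoint supports of $\partial_m z_i$ and hence the splitting of $Z_m$. This gives $\beta_{m,n}=\sum_i \beta_m(K_i)$ over the maximal $m$-strongly connected subcomplexes, and your observation that a nonzero $m$-cycle needs at least $m+2$ vertices correctly explains why the sum starts at $j=m+2$.

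The gap is in the last step, where you pass from $\sum_i\beta_m(K_i)$ to the indexed form with the indicators $\eta_\sigma^{(j,r,m)}$. The indicator in the statement refers to the event that the \emph{full induced} complex $X(\sigma,\bp)$ is a maximal $m$-strongly connected subcomplex with $\beta_m\big(X(\sigma,\bp)\big)=r$, and you simply assert that each $K_i$ ``is supported on, and is determined by, the vertex set it spans.'' But $X(\sigma_i,\bp)$, for $\sigma_i$ the vertex set of $K_i$, contains \emph{every} face of $X([n],\bp)$ on those vertices, not just the faces of $K_i$: for $m\ge 2$ it may contain, say, an edge joining two vertices of $K_i$ that lies in no $m$-face (then purity, condition $(ii)$ of Definition \ref{def.strong.conn.type1}, fails for $X(\sigma_i,\bp)$), or $m$-faces belonging to a different class whose vertices all lie in $\sigma_i$ (then condition $(i)$ fails). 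In such a configuration — which has positive probability, e.g.\ an octahedral $2$-sphere plus one diagonal edge not contained in any triangle — the indicator $\eta_{\sigma_i}^{(j,r,m)}$ is zero even though $\beta_m(K_i)=r\ge1$, so the term-by-term identification you claim does not follow and needs either a proof that every maximal $m$-strongly connected subcomplex with nonzero $\beta_m$ is an induced subcomplex (which is false pointwise, as above), or an alignment with the precise meaning of ``forms'' used in \cite{owada:samorodnitsky:thoppe:2021} (e.g.\ that $\sigma$ spans a maximal $m$-strongly connected subcomplex $K$ with $\beta_m(K)=r$). A related unproved assertion is that $K_i$ is \emph{determined} by its vertex set; for $m\ge2$ two distinct maximal components can in principle span the same $\sigma$. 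Note that for $j=m+2$ — the only case in which the present paper needs the exact identification, since the terms with $j\ge m+3$ are only ever bounded above via $S_j$ and $R_j$ — the issue disappears, because an induced complex on $m+2$ vertices carrying a non-trivial $m$-cycle is automatically the full boundary complex; but your proof of the general identity \eqref{e:representation.Betti} as literally stated is incomplete at exactly this point.
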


We next provide an estimate of the probability that the restriction of
$X([n], \bp)$ to a finite number of points in $[n]$ contains  an $m$-strongly connected complex, 
which has at least one trivial or non-trivial $m$-cycle.  Since the
proof is  almost identical  to Lemma 7.1 of
\cite{owada:samorodnitsky:thoppe:2021}, we omit it. 

\begin{lemma} \label{l:estimate.st.conn}
Let $K$ be an $m$-strongly connected simplicial complex on $j\ge m+2$
points, which contains at least one trivial or non-trivial $m$-cycle.
Then $K$ contains at least
$$
\binom{m+2}{i+1} + (j-m-2)\binom{m}{i}
$$
faces of dimension $i=q,\ldots, m$ and, hence,  for $\sigma
\subset [n]$ with $|\sigma|=j$,  
$$
\P \big(  X(\sigma, \bp)  \ \text{contains a subcomplex
  isomorphic to}\  K \big) \le j! \, \prod_{i=q}^m p_i^{\binom{m+2}{i+1}} \Big(  \prod_{i=q}^m p_i^{\binom{m}{i}} \Big)^{j-m-2}. 
$$
Here,  the isomorphism is that of simplicial complexes.  
\end{lemma}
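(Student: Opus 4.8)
The plan is to first establish the lower bound on the face count of an $m$-strongly connected complex $K$ on $j$ points that carries at least one (trivial or non-trivial) $m$-cycle, and then to convert that combinatorial bound into the stated probability estimate via a straightforward union bound over embeddings. For the face-count step, I would argue as follows. Since $K$ carries an $m$-cycle, it must contain at least one $m$-face; call it $\sigma_0$. By $m$-strong connectivity (Definition \ref{def.strong.conn.type1}) and purity of the $m$-skeleton, every one of the $j$ vertices lies in some $m$-face, and any two $m$-faces are joined by a chain of $m$-faces meeting in $(m-1)$-faces. Fix a spanning set of $m$-faces obtained by a ``breadth-first'' exploration: start from $\sigma_0$, whose $m+2$ vertices already contribute $\binom{m+2}{i+1}$ faces of each dimension $i$; then, processing the remaining $j-m-2$ vertices one at a time in the order they are first reached by the connecting chains, each newly added vertex $v$ comes with an $m$-face $\sigma$ containing $v$ and meeting the previously-built part in an $(m-1)$-face. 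That $m$-face $\sigma$ contributes, in particular, all $i$-faces through $v$ inside $\sigma$ that are \emph{not} already counted; since the $(m-1)$-face $\sigma\setminus\{v\}$ is shared with the existing complex, the genuinely new $i$-faces through $v$ are those containing $v$ and $i$ other vertices of $\sigma\setminus\{v\}$, of which there are exactly $\binom{m}{i}$. Summing over the $j-m-2$ new vertices gives the claimed lower bound $\binom{m+2}{i+1}+(j-m-2)\binom{m}{i}$ on the number of $i$-faces, for each $i=q,\dots,m$.

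For the probability step, note that the event ``$X(\sigma,\bp)$ contains a subcomplex isomorphic to $K$'' is contained in the union, over the at most $j!$ injective maps $\phi$ from the vertex set of $K$ into $\sigma$, of the events ``$\phi(K)\subset X(\sigma,\bp)$''. For a fixed $\phi$, the event that all faces of $\phi(K)$ are present is an intersection of independent events (one per potential face), so its probability is $\prod_{i} p_i^{(\#\,i\text{-faces of }K)}$, which by the face-count bound and the fact that each $p_i\le 1$ is at most $\prod_{i=q}^m p_i^{\binom{m+2}{i+1}}\bigl(\prod_{i=q}^m p_i^{\binom{m}{i}}\bigr)^{j-m-2}$; here I use that faces of dimension below $q$ have $p_i=1$ and contribute nothing, while dimensions above $m$ can only shrink the probability. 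Multiplying by the number $j!$ of maps yields the stated estimate.

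The main obstacle is the face-counting argument, specifically justifying that the breadth-first exploration really produces $\binom{m}{i}$ genuinely new $i$-faces at each step without double-counting across steps. The subtlety is that a later $m$-face could in principle re-use vertices from several earlier stages, so one must be careful to attribute each new face to a unique ``responsible'' vertex (the last of its vertices to be added in the exploration order) and check that the count of faces whose responsible vertex is a given $v$ is at least $\binom{m}{i}$; this is exactly guaranteed because $v$ arrives together with a full $m$-face $\sigma$ on $m+1$ vertices besides $v$. Since this is essentially the argument in Lemma 7.1 of \cite{owada:samorodnitsky:thoppe:2021}, I would only sketch it and refer there for the routine bookkeeping, which is why the excerpt says the proof is omitted.
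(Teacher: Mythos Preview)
Your probability step (union bound over the at most $j!$ embeddings, then independence of the Bernoulli variables defining $\Gamma_\bp$) is correct. The gap is in the face-count argument. You call $\sigma_0$ an $m$-face and then say it has $m+2$ vertices contributing $\binom{m+2}{i+1}$ faces of dimension $i$; but an $m$-face has $m+1$ vertices and only $\binom{m+1}{i+1}$ $i$-faces. With that corrected, your BFS exploration yields only $\binom{m+1}{i+1}+(j-m-1)\binom{m}{i}$, which is exactly the bound for an $m$-strongly connected complex \emph{without} the cycle hypothesis (this is the bound invoked at \eqref{e:upper.bdd.E.R.D+m+1.2}, from \cite[Lemma 8.1]{fowler:2019}). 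The lemma's bound is strictly stronger, by $\binom{m}{i-1}$ in each dimension $i\ge 1$, and that improvement comes precisely from the cycle. Your sketch never uses the cycle except to say ``there exists an $m$-face'', which purity already guarantees.

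Concretely, take $m=1$ and let $K$ be the $4$-cycle. Your BFS from one edge reaches all $4$ vertices via a spanning path on $3$ edges; nothing in your bookkeeping forces the fourth edge. Yet the lemma requires $\binom{3}{2}+(4-3)\binom{1}{1}=4$ edges, and it is the closing of the cycle that supplies the last one. More generally, $K$ need not contain the boundary of an $(m+1)$-simplex, so you cannot simply ``start from $m+2$ vertices with all $\binom{m+2}{i+1}$ faces present''. The cycle condition must enter the count itself---for instance, by noting that a spanning ``tree'' of $m$-faces (each new face adding a vertex) is collapsible and hence carries no $m$-cycle, so $K$ must contain further $m$-faces beyond the $j-m$ in the tree, and then tracking what those extra faces contribute in lower dimensions. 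Your sketch needs to say where exactly the cycle is used.
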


\subsection{Concepts and representations}    \label{sec:concepts}

The main purpose of this section is to introduce a number of concepts
together with  their short-hand notations, that are used throughout
the proof of main theorems. First, recalling the indicator function in
\eqref{e:representation.Betti}, define, for $j\ge m+2$ and $r\ge1$,  
\begin{equation}  \label{e:def.T.jr}
T_{j,r} := \sum_{\sigma\subset [n], \, |\sigma|=j} \eta_\sigma^{(j,r,m)}. 
\end{equation}
If $j=m+2$, then  $\eta_\sigma^{(m+2,r,m)}$ is   identically zero for
all $r\ge2$. In other words, $\eta_\sigma^{(m+2,1,m)}$ is the only
indicator which can be non-zero. Because of that we will drop  the
subscript ``$1$" from $T_{m+2,1}$ and simply  
write $T_{m+2}:= T_{m+2,1}$. In particular, a non-zero value of
$\eta_\sigma^{(m+2,1,m)}$ requires that $X(\sigma,\bp)$ forms a maximal
non-trivial $m$-cycle, such that $\beta_m\big( X(\sigma,\bp)
\big)=1$. By Proposition \ref{p:representation.Betti}, $\beta_{m,n}$
can be represented as  
\begin{equation}  \label{e:repre.Betti}
\beta_{m,n} = \sum_{j=m+2}^n \sum_{r\ge1} r T_{j,r} = T_{m+2} + \sum_{j=m+3}^n \sum_{r\ge1} r T_{j,r}. 
\end{equation}

Next, let us define for $j\ge m+2$, 
\begin{equation}  \label{e:def.Sj}
S_j := \sum_{\sigma\subset [n], \, |\sigma|=j}\xi_\sigma^{(j,m)}, 
\end{equation}
where $\xi_\sigma^{(j,m)}$ is the indicator function   of the event on
 which $X(\sigma, \bp)$   
contains an $m$-strongly connected subcomplex with at
least one trivial or non-trivial $m$-cycle.
If $j=m+2$, a non-zero value of the indicator
$\xi_\sigma^{(m+2,m)}$ requires that $X(\sigma, \bp)$ forms a trivial or
non-trivial $m$-cycle. When $X(\sigma, \bp)$ forms a non-trivial $m$-cycle, we
have that $\beta_m\big( X(\sigma, \bp) \big)=1$. Note also that a
``trivial $m$-cycle" means a simplicial complex of a single 
$(m+1)$-simplex.   
Similarly, for $j\ge m+2$, let 
\begin{equation}  \label{e:def.Rj}
R_j := \sum_{\sigma\subset [n], \, |\sigma|=j}\widetilde \xi_\sigma^{(j,m)},
\end{equation}
where $\widetilde \xi_\sigma^{(j,m)}$ is the indicator function 
of the event on
 which  $X(\sigma, \bp)$   
contains an $m$-strongly connected subcomplex. Clearly, 
for every $j\ge m+2$ we have 
\begin{equation}  \label{e:T.S.R.inequ}
\sum_{r\ge 1}T_{j,r}\le S_j 
\le R_j.
\end{equation}

For the proof of Theorem \ref{t:lower.LD}, one needs to reformulate
the probability space on which \eqref{e:repre.Betti},
  \eqref{e:def.Sj}, and \eqref{e:def.Rj} are defined, in the following way. Let $K_n$ be the
$(m+1)$-skeleton of a complete simplicial complex on $[n]$. In other
words, $K_n$ is the set of all simplices in $[n]$ with dimension $m+1$
or less. 
Given  connectivity probabilities in $\bp$, let $\Gamma_\bp$ be a
random subset of $K_n$ such that for every $i=1,\dots,m+1$, a word of
length $i+1$ is included in $\Gamma_\bp$ with probability
$p_i$. More precisely, $\Gamma_\bp=\Gamma_\bp(\omega)$ is  a random subset
of $K_n$ defined on the product space $\{ 0,1
\}^{\sum_{\ell=2}^{m+2}\binom{n}{\ell}}$ of Bernoulli random variables
with parameters  
$$
(\underbrace{p_1,\dots,p_1}_{\binom{n}{2}}, \underbrace{p_2,\dots,p_2}_{\binom{n}{3}}, \dots, \underbrace{p_{m+1}, \dots, p_{m+1}}_{\binom{n}{m+2}})\in [0,1]^{\sum_{\ell=2}^{m+2}\binom{n}{\ell}}. 
$$
Throughout the paper we may and will assume that the $(m+1)$-skeleton of $ X([n], \bp)$
is defined on this probability space.

\subsection{Proof of Theorem \ref{t:lower.LD}}

We begin with the following proposition. 

\begin{proposition} \label{p:lower.bdd.with.leading.term} 
Under the assumption \eqref{e:pos.assump}, for every $\eta\in \big(0,(1-\vep)/3\big)$, we have 
\begin{align}
\begin{split}  \label{e:first.leading.term}
&\liminf_{n\to\infty} \frac{1}{n^{\tau_q \wedge (\taual)}} \log \P \Big( \beta_{m,n}\le (1-\vep) \,\frac{n^\taual}{(m+2)!}  \Big)  \\
&\ge \liminf_{n\to\infty} \frac{1}{n^{\tau_q \wedge (\taual)}} \log \P \Big( S_{m+2}\le (1-\vep -2\eta) \, \frac{n^\taual}{(m+2)!}  \Big). 
\end{split}
\end{align}
\end{proposition}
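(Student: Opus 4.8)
The plan is to dominate $\beta_{m,n}$ by $S_{m+2}$ plus a remainder that will turn out to be negligible on the relevant scale. By the representation \eqref{e:repre.Betti} we have $\beta_{m,n}=T_{m+2}+W_n$ with $W_n:=\sum_{j=m+3}^n\sum_{r\ge1}r\,T_{j,r}$, while the first inequality in \eqref{e:T.S.R.inequ} gives $T_{m+2}=\sum_{r\ge1}T_{m+2,r}\le S_{m+2}$; hence $\beta_{m,n}\le S_{m+2}+W_n$. Fix $\eta\in\big(0,(1-\vep)/3\big)$ and put $c_n:=n^{\taual}/(m+2)!$. Then the event inclusion $\{S_{m+2}\le(1-\vep-2\eta)c_n\}\cap\{W_n\le 2\eta c_n\}\subseteq\{\beta_{m,n}\le(1-\vep)c_n\}$ holds, so the task reduces to showing that the left-hand event is asymptotically no rarer, on the exponential scale $n^{\tau_q\wedge(\taual)}$, than $\{S_{m+2}\le(1-\vep-2\eta)c_n\}$ alone.

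To decouple the two events I would work on the product-of-Bernoulli probability space carrying $\Gamma_\bp$ and use the FKG (Harris) inequality. Each indicator $\xi_\sigma^{(m+2,m)}$ is a nondecreasing function of those Bernoulli variables (it is the event that all $m$-faces on $\sigma$ are present, a monotone event), so $S_{m+2}$ is nondecreasing; likewise the quantity $W_n^\ast:=\#\{m\text{-faces of }X([n],\bp)\text{ lying in some }m\text{-strongly connected subcomplex on }\ge m+3\text{ vertices that carries an }m\text{-cycle}\}$ is nondecreasing, and since distinct maximal $m$-strongly connected subcomplexes share no $m$-face one has $W_n\le W_n^\ast$. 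Both $\{S_{m+2}\le a\}$ and $\{W_n^\ast\le b\}$ are decreasing events, so FKG yields $\P(\beta_{m,n}\le(1-\vep)c_n)\ge\P(S_{m+2}\le(1-\vep-2\eta)c_n)\cdot\P(W_n^\ast\le 2\eta c_n)$. As $\taual>0$ (by \eqref{e:pos.assump}) and $\tau_q=q+1-\psi_q>q\ge1$, the exponent $\tau_q\wedge(\taual)$ is positive; thus, once we know $\P(W_n^\ast>2\eta c_n)\to0$, dividing $\log$ by $n^{\tau_q\wedge(\taual)}$ and taking $\liminf$ in the displayed inequality produces exactly \eqref{e:first.leading.term}.

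It remains to estimate $\E[W_n^\ast]$, for which Lemma \ref{l:estimate.st.conn} is the key input. A nonzero contribution from a vertex set of size $j\ge m+3$ forces the restricted complex to be $m$-strongly connected and to contain a non-trivial $m$-cycle, so Lemma \ref{l:estimate.st.conn} bounds the corresponding probability by $j!\prod_{i=q}^m p_i^{\binom{m+2}{i+1}}\big(\prod_{i=q}^m p_i^{\binom{m}{i}}\big)^{j-m-2}$. Combining this with $\binom{n}{j}\,j!\le n^j$, $\beta_m\le\binom{j}{m+1}$, and the identities $\sum_{i=q}^m\binom{m+2}{i+1}\al_i=m+2-\taual$ and $\sum_{i=q}^m\binom{m}{i}\al_i=\psi_m$, I expect a bound of the form $\E[W_n^\ast]\le n^{\taual}\sum_{j\ge m+3}C_j\,n^{(j-m-2)(1-\psi_m)}$, the $C_j$ absorbing combinatorial factors. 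Since $m\ge k+1$ forces $\psi_m>1$, the $j=m+3$ term equals $O(n^{\taual+1-\psi_m})=o(n^{\taual})$, and Markov's inequality would then give $\P(W_n^\ast>2\eta c_n)\to0$ --- provided the full sum over $j$ is controlled.

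Controlling the sum over $j$ is the main obstacle. The factor $C_j$ contains the number of isomorphism types of $m$-strongly connected complexes carrying an $m$-cycle on $j$ vertices, which grows superexponentially in $j$, so the term-by-term bound above is only effective for $j$ below a slowly growing cutoff $J=J(n)$ (one may take $J$ a suitable power of $\log n$). To dispatch the range $j>J$ I would argue separately that, with probability tending to $1$, $X([n],\bp)$ contains \emph{no} $m$-strongly connected subcomplex carrying an $m$-cycle on more than $J$ vertices at all: this again rests on Lemma \ref{l:estimate.st.conn}, which shows such a subcomplex must carry, in each dimension $i\in\{q,\dots,m\}$, a number of faces growing linearly in the number of its vertices, so that the expected number of large such subcomplexes is $o(1)$ for an appropriate choice of $J$. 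On the complement of this rare event, $W_n^\ast$ coincides with its truncation to $m+3\le j\le J$, which the first-moment estimate handles; the two regimes combine to yield $\P(W_n^\ast>2\eta c_n)\to0$, which closes the argument.
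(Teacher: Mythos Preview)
Your overall strategy is that of the paper: dominate $\beta_{m,n}$ by $S_{m+2}$ plus a monotone remainder, factor the lower-tail probability via FKG, and show the remainder exceeds $2\eta c_n$ with vanishing probability. The monotone surrogate $W_n^\ast$ and the inequality $W_n\le W_n^\ast$ are fine.

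The gap is in your treatment of the tail. You correctly note that summing the first-moment bound $C_j\,n^{\taual}(n^{1-\psi_m})^{j-m-2}$ over all $j\ge m+3$ is hopeless because $C_j$ (which absorbs the number of isomorphism types of $m$-strongly connected complexes on $j$ vertices) grows superexponentially. But your proposed fix---a cutoff $J=J(n)$ growing like a power of $\log n$, and for $j>J$ the claim that ``the expected number of large such subcomplexes is $o(1)$''---runs into the very same obstacle: Lemma~\ref{l:estimate.st.conn} bounds the probability for a \emph{single} isomorphism type, so to get a bound on existence one must still sum over types, and for $j$ of order $n$ the type count (at least $2^{cj}$, and as large as $2^{\Theta(j^{m+1})}$ if one only controls the set of $m$-faces) dominates the penalty $n^{-(j-m-2)(\psi_m-1)}$. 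The tail sum therefore diverges, and the argument as stated does not close.

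The paper repairs this with a \emph{fixed} cutoff. Pick an integer $D>(m+2+\tau_m)/(\psi_m-1)$. For $m+3\le j\le D+m$ there are only finitely many isomorphism types, and your first-moment estimate gives $\sum_{j=m+3}^{D+m}\binom{j}{m+1}\E[S_j]=o(n^{\taual})$. For $j>D+m$ the paper does not count large subcomplexes at all; instead it exploits maximality and containment: every maximal $m$-strongly connected subcomplex on $\ge D+m+1$ vertices contains an $m$-strongly connected subcomplex spanning exactly $D+m+1$ vertices, and two distinct maximal ones cannot share such a piece (they would share an $m$-face). Hence $\sum_{j>D+m}\sum_{r\ge1}rT_{j,r}\le n^{m+1}R_{D+m+1}$, a count at a \emph{single fixed size}, for which the number of isomorphism types is a constant depending only on $D$ and $m$. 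Then $\E[R_{D+m+1}]\le C_D\,n^{\tau_m-D(\psi_m-1)}$, so $n^{m+1}\E[R_{D+m+1}]\to0$ by the choice of $D$, and Markov plus FKG (now with three decreasing events rather than your two) finish the proof.
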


\begin{proof}
Fix a positive integer $D$ such that 
\begin{equation}  \label{e:def.D}
D > \frac{m+2+\tau_m}{\psi_m-1}; 
\end{equation}
note that the fraction in the right hand side of \eqref{e:def.D} is
positive due to the first assumption in \eqref{e:pos.assump}. 
We decompose the representation \eqref{e:repre.Betti} of 
$\beta_{m,n}$ as 
\begin{equation}  \label{e:decomp.Betti}
\beta_{m,n} = T_{m+2} + \sum_{j=m+3}^{D+m} \sum_{r\ge 1} rT_{j,r} + \sum_{j=D+m+1}^n \sum_{r\ge 1} rT_{j,r}. 
\end{equation}
We  derive an upper bound for each of the  terms in
\eqref{e:decomp.Betti}. First, it is clear that
$T_{m+2}\le S_{m+2}$.  Recall next that the
$m$-dimensional Betti number of any simplicial complex on $j$ vertexes
is upper 
bounded by the corresponding $m$-face counts, which  is further upper 
bounded by $\binom{j}{m+1}$. This, together with
  \eqref{e:T.S.R.inequ}, implies that  
$$
\sum_{j=m+3}^{D+m} \sum_{r\ge 1} r T_{j,r} \le \sum_{j=m+3}^{D+m} \binom{j}{m+1}S_j. 
$$
For the remaining terms  in \eqref{e:decomp.Betti}, we use the bound   
\begin{align*}
\sum_{j=D+m+1}^n \sum_{r\ge 1} rT_{j,r} &\le n^{m+1}\sum_{j=D+m+1}^n  \sum_{\sigma \subset [n], \, |\sigma|=j} \one \big\{ X(\sigma,\bp) \text{ is a maximal } \\
&\qquad \qquad \qquad\qquad \qquad\qquad m\text{-strongly connected   subcomplex
           of} \  X([n],\bp)                                                                                                                                                          \big\} \\
&\le n^{m+1}R_{D+m+1}, 
\end{align*}
where $R_{D+m+1}$ is given in \eqref{e:def.Rj}. The last inequality
above follows from the fact that a maximal $m$-strongly connected
subcomplex of size  $j\ge D+m+1$  always contains a further
$m$-strongly connected subcomplex  on $D+m+1$ vertices, and the 
maximality prevents such an $m$-strongly connected subcomplex
on $D+m+1$ vertices from 
being counted more than once. 

Let $\eta\in \big(0,(1-\vep)/3\big)$. Denoting $a_m:= n^\taual/(m+2)!$
we use \eqref{e:decomp.Betti} and the above bounds to write 
\begin{align}
&\P\big( \beta_{m,n}\le (1-\vep) a_m \big) \ge \P \Big( S_{m+2}\le (1-\vep -2\eta)a_m,  \label{e:change.to.comb} \\
&\qquad \qquad\qquad \qquad \qquad \qquad  \sum_{j=m+3}^{D+m}\binom{j}{m+1}S_j \le \eta a_m, \, n^{m+1}R_{D+m+1}\le \eta a_m\Big). \notag 
\end{align}
Since the indicators in \eqref{e:def.Sj} and \eqref{e:def.Rj} are all increasing
functions of $\Gamma_\bp$, we can use the FKG inequality (see, e.g.,
\cite[Section 2.3]{meester:roy:1996}) to see that \eqref{e:change.to.comb} is bounded below by 
$$
 \P\big(S_{m+2} \le (1-\vep -2\eta)a_m \big)  \, \P  \Big( \sum_{j=m+3}^{D+m} \binom{j}{m+1}S_j \le \eta a_m \Big)\, \P (n^{m+1} R_{D+m+1}\le \eta a_m). 
$$

We claim that for every $j\ge m+3$, 
$$
\E[S_j] \le C_j n^\taual (n^{1-\psi_m})^{j-m-2}
$$
for a finite $C_j$ independent of $n$. 
Indeed, by \eqref{e:def.Sj}, 
$$ 
\E[S_j] =\sum_{K: |K|=j} \binom{n}{j} 
\P \Big( \text{$X(\sigma_j,
  \bp)$  contains a subcomplex isomorphic to $K$} \Big), 
$$
where the sum above is taken over all isomorphism classes of $m$-strongly connected complexes on $j$ vertices, with at least one trivial or non-trivial $m$-cycles. 
Here, and in similar situations in the sequel, we use a notation of
the type $\sigma_j$ to denote any fixed word  of length $j$ in $[n]$;
this is legitimate since the indicator functions in
\eqref{e:def.Sj} all have the same expectations. 
By
Lemma \ref{l:estimate.st.conn} and the fact that  the  number of terms
in the above sum   depends only on $j$, 
$$
\E[S_j] \le C_j n^j \prod_{i=q}^m p_i^{\binom{m+2}{i+1}} \Big(  \prod_{i=q}^m p_i^{\binom{m}{i}}\Big)^{j-m-2} =C_j n^\taual (n^{1-\psi_m})^{j-m-2}. 
$$
By Markov's inequality, 
\begin{align}
\begin{split}  \label{e:2nd.Markov}
\P  \Big( \sum_{j=m+3}^{D+m} \binom{j}{m+1}S_j \le \eta a_m \Big) &\ge 1-\eta^{-1} a_m^{-1} \sum_{j=m+3}^{D+m} C_j n^\taual (n^{1-\psi_m})^{j-m-2} \\ 
&\ge 1-C^* n^{1-\psi_m} \to 1, \ \ \text{ as } n\to\infty. 
\end{split}
\end{align}
Next, we claim  that 
\begin{equation}  \label{e:upper.bdd.ER}
\E[R_{D+m+1}] \le C^* n^{\tau_m-D(\psi_m-1)}. 
\end{equation}
To see this, note that by \eqref{e:def.Rj},
\begin{align}  \label{e:upper.bdd.E.R.D+m+1.1}
\E[R_{D+m+1}] =& \sideset{}{'}\sum_{K: |K|=D+m+1} \binom{n}{D+m+1}  \\
\notag &\hskip 0.8in \times \P \Big(  \text{$X(\sigma_{D+m+1},
  \bp)$  contains a subcomplex isomorphic to $K$} \Big), 
\end{align}
where the above sum is taken over all isomorphism classes of $m$-strongly connected complexes on $D+m+1$ vertices.
Since any simplicial complex  $K$ in the above sum  contains at least
$\binom{m+1}{i+1} + D\binom{m}{i}$ faces of dimension $i$ for each
$1\le i \le m$ (the first step of the argument in \cite[Lemma
8.1]{fowler:2019} is a detailed derivation of this fact),  
 we have for every $K$,
\begin{align}  \label{e:upper.bdd.E.R.D+m+1.2}
&\binom{n}{D+m+1} \P
\Big(  \text{$X(\sigma_{D+m+1},
  \bp)$ contains a subcomplex isomorphic to $K$} \Big)\\
\notag 
 & \hskip 2in \leq n^{D+m+1} \prod_{i=q}^m p_i^{\binom{m+1}{i+1} +
   D\binom{m}{i}} = n^{\tau_m -D(\psi_m-1)},  
\end{align}
and substituting  \eqref{e:upper.bdd.E.R.D+m+1.2}  into
\eqref{e:upper.bdd.E.R.D+m+1.1}  proves \eqref{e:upper.bdd.ER}. Now 
Markov's inequality and \eqref{e:upper.bdd.ER} show that
for large $n$, 
\begin{align}
\begin{split}  \label{e:3rd.Markov}
\P(n^{m+1}R_{D+m+1}\le \eta a_m) &\ge 1-n^{m+1} \E[R_{D+m+1}]\\
&\ge 1-C^* n^{m+1+\tau_m -D(\psi_m-1)} \to1, \ \ \ n\to\infty, 
\end{split}
\end{align}
where the last convergence is due to  the choice of $D$.  
The proof is completed  by combining  \eqref{e:2nd.Markov} and \eqref{e:3rd.Markov}. 
\end{proof}

Proposition \ref{p:lower.LD.leading.term2} below establishes  a lower
bound for the expression in the right hand side of 
\eqref{e:first.leading.term}.  Theorem \ref{t:lower.LD} will then
follow from   Propositions
\ref{p:lower.bdd.with.leading.term} and
\ref{p:lower.LD.leading.term2} by letting $\eta\downarrow0$.  We keep
the above notation $a_m=n^\taual/(m+2)!$.

For the proof of Proposition \ref{p:lower.LD.leading.term2}, we will exploit Theorem 2 in \cite{janson:warnke:2016} and Proposition \ref{p:small.q.face} of
the Appendix; for this purpose, we need to reformulate $S_{m+2}$ in the following way. For $\al \subset [n]$ with $|\al|=m+2$, denote by 
$$
\Big(\al_1^{(i)}, \al_2^{(i)},\dots,\al_{\binom{m+2}{i+1}}^{(i)}  \Big), \ \ i=q,\dots,m, 
$$
the collection of words of length $i+1$, that are contained in $\al$. Define 
\begin{equation}  \label{e:def.Qm2}
Q_{m+2}(\al) := \Big\{  \al_{p_2}^{(p_1)} : p_1=q,\dots,m, \, p_2=1,\dots,\binom{m+2}{p_1+1}\Big\}, 
\end{equation}
and $I_{\al}^{(m+2)} := \one \big\{ Q_{m+2}(\al)\subset \Gamma_\bp \big\}$; 
then, we have 
\begin{equation}  \label{e:reformulate.Sm2}
S_{m+2} = \sum_{\al\subset [n], \, |\al|=m+2} I_\al^{(m+2)}. 
\end{equation}

\begin{proposition} \label{p:lower.LD.leading.term2}  Assume that \eqref{e:pos.assump} holds. 
Let $\vep\in (0,1)$ and $\eta\in \big(0,(1-\vep)/3\big)$.  
\\
$(i)$ If $\taual\ge \tau_q$, then 
$$
\liminf_{n\to\infty} \frac{1}{n^{\tau_q}}\, \log \P\big( S_{m+2}  \le (1-\vep -2\eta) a_m \big) \ge -C_{m,q}  (\vep+3\eta)^2, 
$$
where $C_{m,q}$ is given in \eqref{e:def.C.epsilon}. \\
$(ii)$ If $\taual < \tau_q$, 
$$
\liminf_{n\to\infty} \frac{1}{n^\taual} \, \log \P \big(  S_{m+2}  \le (1-\vep -2\eta) a_m \big) \ge -\frac{5000}{(m+2)!}\, (\vep+3\eta)^2. 
$$
\end{proposition}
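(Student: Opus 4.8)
The plan is to apply the lower-tail lower bound of \cite[Theorem 2]{janson:warnke:2016} to $S_{m+2}$ written, as in \eqref{e:reformulate.Sm2}, as a sum $S_{m+2}=\sum_{\al\subset[n],\,|\al|=m+2}I_\al^{(m+2)}$ of indicators of the increasing events $\{Q_{m+2}(\al)\subset\Gamma_\bp\}$. That theorem bounds $\P\big(S_{m+2}\le(1-\delta)\E[S_{m+2}]\big)$ from below in terms of the mean $\mu_n:=\E[S_{m+2}]$ and the pair-correlation parameter
\[
\Delta_n:=\sum\E\big[I_\al^{(m+2)}I_{\al'}^{(m+2)}\big],
\]
the sum over unordered pairs of distinct $(m+2)$-subsets $\al,\al'$ with $Q_{m+2}(\al)\cap Q_{m+2}(\al')\ne\emptyset$, equivalently $|\al\cap\al'|\ge q+1$. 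The split into cases $(i)$ and $(ii)$ reflects whether $\Delta_n$ or $\mu_n$ dominates; heuristically, in case $(i)$ one depletes $S_{m+2}$ most cheaply by thinning the $\asymp n^{\tau_q}$ many $q$-faces --- a binomial-type moderate deviation of cost $e^{-\Theta(\vep^2 n^{\tau_q})}$ --- whereas in case $(ii)$ the relevant event is a Poisson-type downward fluctuation of $S_{m+2}$ on its own scale $\mu_n\asymp n^{\taual}$.

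First I would record the two moment estimates. Since $\binom n{m+2}\sim n^{m+2}/(m+2)!$ and, by the definition of $\tau_{m+1}$, $\prod_{i=q}^m p_i^{\binom{m+2}{i+1}}=n^{-\sum_{i=q}^m\binom{m+2}{i+1}\al_i}=n^{-(m+2-(\taual))}$, we get $\mu_n=\binom n{m+2}\prod_{i=q}^m p_i^{\binom{m+2}{i+1}}\sim n^{\taual}/(m+2)!=a_m$; in particular $\mu_n/a_m\to1$, so for large $n$ the event $\{S_{m+2}\le(1-\vep-2\eta)a_m\}$ contains $\{S_{m+2}\le(1-\vep-3\eta)\mu_n\}$, which accounts for the $(\vep+3\eta)^2$ in the statement. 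For the second moment, a pair $\al,\al'$ with $|\al\cap\al'|=\ell$ (so $q+1\le\ell\le m+1$) has $Q_{m+2}(\al)\cup Q_{m+2}(\al')$ containing $2\binom{m+2}{i+1}-\binom{\ell}{i+1}$ words of length $i+1$, $i=q,\dots,m$, whence $\E[I_\al^{(m+2)}I_{\al'}^{(m+2)}]=\prod_{i=q}^m p_i^{2\binom{m+2}{i+1}-\binom{\ell}{i+1}}=n^{2(\taual)+\ell-\tau_{\ell-1}-2(m+2)}$, using $\sum_{i=1}^{\ell-1}\binom{\ell}{i+1}\al_i=\ell-\tau_{\ell-1}$. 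Multiplying by the $\asymp n^{2(m+2)-\ell}$ pairs with a prescribed intersection size, the contribution to $\Delta_n$ of level $\ell$ is of order $n^{2(\taual)-\tau_{\ell-1}}$, so $\Delta_n\asymp n^{2(\taual)-\min_{q\le j\le m}\tau_j}$.

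The crucial step is to pin down $\min_{q\le j\le m}\tau_j$, which is where the hypotheses come in. By \eqref{e:tau.maximized.at.crit} the finite sequence $(\tau_j)_{q\le j\le m}$ is unimodal, so its minimum equals $\tau_q\wedge\tau_m$; moreover a short computation with Pascal's rule gives $\tau_m-(\taual)=\psi_{m+1}-\al_{m+1}-1=\sum_{i=1}^m\binom{m+1}{i}\al_i-1\ge\psi_m-1>0$, the last inequality because $m\ge k+1$. Thus in case $(i)$, where $\taual\ge\tau_q$, we have $\tau_m>\taual\ge\tau_q$, hence $\min_{q\le j\le m}\tau_j=\tau_q$ and $\Delta_n\asymp n^{2(\taual)-\tau_q}$ --- the dominant pairs being exactly those meeting in a single (``small'') $q$-face; Proposition \ref{p:small.q.face} of the Appendix is what supplies the matching explicit upper bound $\Delta_n\le C\,n^{2(\taual)-\tau_q}$. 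In case $(ii)$, where $\taual<\tau_q$, combining with $\tau_m>\taual$ gives $\min_{q\le j\le m}\tau_j>\taual$, so $\Delta_n=o(\mu_n)$.

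Feeding $\mu_n$ and $\Delta_n$ into \cite[Theorem 2]{janson:warnke:2016} then produces, for $n$ large, a lower bound of the form $\P\big(S_{m+2}\le(1-\vep-3\eta)\mu_n\big)\ge\exp\big(-C(\vep+3\eta)^2\,\mu_n^2/(\mu_n+\Delta_n)\big)$, and it remains only to evaluate the exponent: in case $(i)$, $\Delta_n\gtrsim\mu_n$ and $\mu_n^2/(\mu_n+\Delta_n)\asymp\mu_n^2/\Delta_n\asymp n^{\tau_q}$; in case $(ii)$, $\mu_n+\Delta_n\sim\mu_n$ and $\mu_n^2/(\mu_n+\Delta_n)\sim\mu_n\asymp n^{\taual}$. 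Dividing $\log\P(\cdots)$ by $n^{\tau_q}$, respectively $n^{\taual}$, and letting $n\to\infty$ gives the two assertions, the crude constants $C_{m,q}$ of \eqref{e:def.C.epsilon} and $5000/(m+2)!$ coming from bounding the binomials and factorials that appear in \cite[Theorem 2]{janson:warnke:2016} and in $\mu_n,\Delta_n$ by powers of $2$. I expect the main obstacle to be twofold: putting $S_{m+2}$ precisely into the framework of \cite[Theorem 2]{janson:warnke:2016} --- checking its side conditions and that its error terms are swallowed by the leading exponential --- and the constant bookkeeping needed to reach the stated $C_{m,q}$ and $5000$, where the $o(1)$'s from $\mu_n/a_m\to1$ and from the sub-dominant pair levels $\ell>q+1$ in $\Delta_n$ must be absorbed without inflating the exponent; the power-of-$n$ accounting itself is routine once the identity for $\E[I_\al^{(m+2)}I_{\al'}^{(m+2)}]$ is established.
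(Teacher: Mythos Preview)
Your proposal has a genuine gap in Case~$(i)$. You write that \cite[Theorem 2]{janson:warnke:2016} gives a lower bound of the form $\exp\big(-C\vep^2\,\mu_n^2/(\mu_n+\Delta_n)\big)$, but this is the shape of the \emph{upper} bound on the lower tail (Janson's inequality); the lower bound in Theorem~2 of Janson--Warnke has exponent $-C\vep^2\,\mu_n(1+\delta_n)=-C\vep^2\,\Lambda_n$, i.e.\ $-C\vep^2(\mu_n+\Delta_n)$ in your notation. In Case~$(i)$ with $\taual>\tau_q$ you correctly find $\Lambda(X_m)\asymp n^{2(\taual)-\tau_q}$, which is \emph{strictly larger} than $n^{\tau_q}$, so applying Janson--Warnke directly to $X_m=S_{m+2}$ yields only $\liminf n^{-\tau_q}\log\P(\cdots)\ge-\infty$, a trivial statement. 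Your approach works as written only in Case~$(ii)$ (and on the boundary $\taual=\tau_q$), where $\mu_n(1+\delta_n)\sim\mu_n$ and the two formulas coincide.

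The paper avoids this obstruction in Case~$(i)$ by \emph{not} applying Janson--Warnke to $X_m$. Instead, Proposition~\ref{p:small.q.face} --- which you misread as a moment estimate for $\Delta_n$ --- is a probability comparison: it shows $\P\big(X_m\le(1-\vep)\E[X_m]\big)\ge\tfrac16\,\P\big(Y_q\le(1-2^{m+5}\vep)\E[Y_q]\big)$, where $Y_q$ is the $q$-face count on the half-vertex set $\mathcal U$. Since $Y_q$ is a sum of \emph{i.i.d.}\ indicators, it has $\delta(Y_q)\equiv0$, and now Janson--Warnke applied to $Y_q$ gives exponent $-C\vep^2\mu(Y_q)\asymp-C\vep^2 n^{\tau_q}$, which is exactly what is needed. (For $\vep+3\eta$ not small, the paper uses a separate FKG argument bounding $\P(X_m=0)$ directly.) The heuristic you state --- thin the $q$-faces --- is right; the point is that one must transfer the problem to the $q$-face count \emph{before} invoking the lower-tail bound, and that transfer is the content of Proposition~\ref{p:small.q.face}.
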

\begin{proof}
Set $X_m$ to be the right hand side in \eqref{e:reformulate.Sm2} and let $K$ be  either a trivial or non-trivial $m$-cycle on $m+2$ points. Clearly  $K$ contains
$\binom{m+2}{i+1}$ faces of dimension $i$ for each $1 \le i \le m$, 
and hence, 
\begin{equation}  \label{e:EXm}
\E[X_{m}] = \binom{n}{m+2}\prod_{i=q}^m p_i^{\binom{m+2}{i+1}} \sim \frac{n^\taual}{(m+2)!} = a_m,  \ \ \ n\to\infty. 
\end{equation}
Therefore, for large $n$ enough, 
$$
(1-\vep-3\eta)\E[X_{m}] \le (1-\vep-2\eta)a_m. 
$$
It thus suffices to lower bound the probability 
\begin{equation}  \label{e:adjusted.RHS}
\P\big( X_{m} \le (1-\vep-3\eta)\E[X_{m}] \big). 
\end{equation}
Set, for $\al, \beta \subset [n]$, $|\al|=|\beta|=m+2$, 
\begin{equation}  \label{e:equiv.rela2}
\al \sim \beta \ \text{ if and only if } \  Q_{m+2}(\al)\cap Q_{m+2}(\beta)\neq \emptyset,  \ \al\neq \beta.
\end{equation}

In order to apply Theorem 2 in \cite{janson:warnke:2016} and Proposition \ref{p:small.q.face} of
the Appendix, we have to deal with  the  following
quantities:
\begin{align}
\begin{split}  \label{e:4.quantities}
\mu(X_{m})&:=\E[X_{m}], \\
\Lambda(X_{m}) &:= \mu(X_m) + \sum_{\substack{\al, \beta \subset [n], |\al|=|\beta|=m+2, \\ \al\sim \beta}} \E[I_\al^{(m+2)}I_\beta^{(m+2)}],\\
\delta(X_m) &:= \Lambda(X_m)/\mu(X_m)-1, \\
\Pi(X_m) &:= \max_{\al \subset [n], \, |\al|=m+2} \E[I_\al^{(m+2)}] \
\text{(the biggest among identical values).}
\end{split}
\end{align}
The behaviour of $\mu(X_m)$ is given in \eqref{e:EXm}. Furthermore,  $\Pi(X_m)  =
n^{-\sum_{i=q}^m\binom{m+2}{i+1}\al_i}\to0$ as $n\to\infty$. 
The asymptotics of $\Lambda(X_m)$ and $\delta(X_m)$ are more involved and postponed to   Lemma \ref{l:Lambda.Delta} in the Appendix. 

Denote by $\mH_{q+1}$ the collection of words of length $q+1$ in
$[n]$. For $\al\in \mH_{q+1}$ let 
\begin{equation}  \label{e:def.J.q+1}
J_\al^{(q+1)} = \one \{ \al\subset \Gamma_\bp \}.
\end{equation}
We set 
\begin{equation}  \label{e:def.mU}
\U = \big[ \lfloor n/2\rfloor \big] = \{  1,2,\dots,\lfloor n/2\rfloor \},
\end{equation} 
and 
$$
H_{q+1}(\U) = \{ \al\in\mH_{q+1}: V(\al)\subset \U \}, 
$$
where $V(\al)$ denotes a vertex set of $\al$. Then 
\begin{equation}\label{e:def.Yq}
Y_q:= \sum_{\al\in \mH_{q+1}(\U)}J_\al^{(q+1)} 
\end{equation}
represents the $q$-face counts in $X(\U, \bp)$. Analogously to \eqref{e:4.quantities}, we also define 
\begin{align*}
\begin{split} 
\mu(Y_q)&:=\E[Y_q], \\
\Pi(Y_q) &:= \max_{\al\in \mH_{q+1}(\U)} \E[J_\al^{(q+1)}] \
\text{(again, the biggest among identical values).}
\end{split}
\end{align*}
Since \eqref{e:def.Yq} is the sum of i.i.d.~random variables,  one can
set $\Lambda(Y_q) \equiv \mu(Y_q)$ and $\delta(Y_q)\equiv 0$.  Clearly, 
\begin{equation}  \label{e:mu.Yq}
\mu(Y_q) = \binom{\lfloor n/2\rfloor}{q+1} p_q \sim \frac{n^{\tau_q}}{2^{q+1}(q+1)!}, \ \text{ and } \ \Pi(Y_q) = n^{-\al_q} \to 0. 
\end{equation}

\medskip

\noindent \textit{\underline{Case I}}: $\taual \ge \tau_q$. \\
Suppose first that   $\vep +3\eta\in [2^{-(m+5)}, 1]$. 
Then, 
\begin{align*}
\P\big( X_m \le (1-\vep-3\eta)\E[X_m] \big) &\ge \P(X_m=0) \ge \P \Big( \bigcap_{\al\in \mH_{q+1}} \{ J_\al^{(q+1)}=0 \} \Big). 
\end{align*} 
Since $\{ J_\al^{(q+1)} =0\}$ is a decreasing event for each $\al\in \mH_{q+1}$ in terms of the Bernoulli random variables defining $\Gamma_\bp$, the FKG inequality ensures that the last expression  is at least 
\begin{align*}
\prod_{\al\in \mH_{q+1}} \P(J_\al^{(q+1)}=0) &= (1-n^{-\al_q})^{\binom{n}{q+1}} \sim \exp \Big\{  -\frac{n^{\tau_q}}{(q+1)!} \Big\}  \ge \exp \Big\{  -\frac{2^{2(m+5)}(\vep+3\eta)^2 n^{\tau_q} }{(q+1)!} \Big\}. 
\end{align*}
It thus follows that
$$
\liminf_{n\to\infty} \frac{1}{n^{\tau_q}} \log \P \big( X_m \le (1-\vep -3\eta) \E[X_m] \big) \ge -\frac{2^{2(m+5)}(\vep+3\eta)^2 }{(q+1)!}. 
$$

Suppose next that $\vep+3\eta\in (0,2^{-(m+5)})$. By  Proposition \ref{p:small.q.face} with  $i=q$,
\begin{equation}  \label{e:need.prop.i=q}
\P \big( X_m \le (1-\vep -3\eta) \E[X_m] \big) \ge \frac{1}{6} \P\big( Y_q \le (1-2^{m+5}(\vep+3\eta)) \E[Y_q]\big). 
\end{equation}
Notice that $\Pi(Y_q)<1$ and for large $n$, by \eqref{e:mu.Yq}, 
$$
(\vep+3\eta)^2 \mu(Y_q) \ge \one \{ \vep + 3\eta < 1/50 \}\, \big(1+\delta(Y_q)\big)^{-1/2} = \one \{ \vep + 3\eta < 1/50 \}.
$$
Therefore, Theorem 2 in \cite{janson:warnke:2016}  is applicable, yielding that for large $n$,
\begin{align*}
&\frac{1}{6} \P\big( Y_q \le (1-2^{m+5}(\vep+3\eta)) \E[Y_q]\big)  \\
&\ge \frac{1}{6}  \exp \Big\{  -\frac{5000}{\big( 1-\Pi(Y_q) \big)^5}\, \big( 2^{m+5}(\vep+3\eta) \big)^2 \mu(Y_q) \big( 1+\delta(Y_q) \big) \Big\} \\
&=\exp  \Big\{ -\log 6 -\frac{5000 \cdot 2^{2(m+5)} (\vep+3\eta)^2}{(1-n^{-\al_q})^5} \, \mu(Y_q)\Big\}. 
\end{align*}
We conclude that 
$$
\liminf_{n\to\infty} \frac{1}{n^{\tau_q}} \log \P \big( X_m \le (1-\vep -3\eta) \E[X_m] \big) \ge -\frac{5^4 \cdot 2^{2(m+6)-q} (\vep+3\eta)^2}{(q+1)!}. 
$$
\medskip

\noindent \textit{\underline{Case II}}: $\taual<\tau_q$. \\
Unlike Case I, we  use \cite[Theorem 2]{janson:warnke:2016} only. By Lemma
\ref{l:Lambda.Delta} and  \eqref{e:EXm}, it holds  that $\Pi(X_m)<1$
and for large $n$, 
$$
(\vep+3\eta)^2 \mu(X_m) \ge \one \{ \vep + 3\eta < 1/50 \}\, \big(1+\delta(X_m)\big)^{-1/2}. 
$$
Hence, Theorem 2 in \cite{janson:warnke:2016} shows  that 
\begin{align*}
&\liminf_{n\to\infty} \frac{1}{n^{\taual}} \log \P \big( X_m \le (1-\vep -3\eta) \E[X_m] \big) \\
&\ge \liminf_{n\to\infty} \frac{1}{n^{\taual}}  \bigg\{ -\frac{5000}{\big( 1-\Pi(X_m) \big)^5}\, (\vep+3\eta)^2 \mu(X_m) \big( 1+\delta(X_m) \big) \bigg\} \\
&=-\frac{5000}{(m+2)!}\, (\vep+3\eta)^2. 
\end{align*}
\end{proof}

\subsection{Proof of Theorem \ref{t:SLLN}}

We use the representation in \eqref{e:repre.Betti}. We begin with a
proposition showing that $T_{m+2}$, the first term  in
\eqref{e:repre.Betti}, itself satisfies a SLLN with the same
scaling, and the same limit, as stated in Theorem \ref{t:SLLN}. After
that, Propositions \ref{p:SLLN.T.jr} and \ref{p:SLLN.residual} verify
that the rest of the terms in \eqref{e:repre.Betti} are all negligible
under the same scaling. 

\begin{proposition}  \label{p:SLLN.T.m+2} 
Under the assumption \eqref{e:pos.assump} we have as $n\to\infty$, 
$$
\frac{T_{m+2}}{n^\taual} \to \frac{1}{(m+2)!} \ \ \text{a.s.}
$$
\end{proposition}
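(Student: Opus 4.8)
The plan is to pin down $\E[T_{m+2}]$ and then to establish exponentially small, hence summable, bounds on the lower and upper deviations of $T_{m+2}$ from its mean; a Borel--Cantelli argument then yields the a.s.\ convergence.

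\emph{First moment.} For $\sigma\subset[n]$ with $|\sigma|=m+2$, write $\{\eta_\sigma^{(m+2,1,m)}=1\}$ as the intersection of (a) ``the full $m$-skeleton of $\sigma$ is present'', (b) ``the $(m+1)$-face $\sigma$ is absent'', and (c) the maximality event ``no $m$-face $\rho\cup\{w\}$ with $\rho\subset\sigma$, $|\rho|=m$, $w\notin\sigma$, is present''. Event (a) has probability $\prod_{i=q}^{m}p_i^{\binom{m+2}{i+1}}=n^{\tau_{m+1}+\al_{m+1}-(m+2)}$; event (b), independent of (a), has probability $1-p_{m+1}\to1$ since $\al_{m+1}\in(0,\infty)$; conditionally on (a), event (c) has probability $\ge1-Cn^{1-\psi_m}\to1$ by a union bound over the $O(n)$ candidate faces, each present with probability at most $n^{-\psi_m}$ (the computation behind Lemma~\ref{l:estimate.st.conn}, using $\psi_m>1$). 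Multiplying and summing over the $\binom n{m+2}$ choices of $\sigma$ gives $\E[T_{m+2}]=(1+o(1))n^{\tau_{m+1}+\al_{m+1}}/(m+2)!$; the same computation without (b),(c) gives $\E[S_{m+2}]=(1+o(1))n^{\tau_{m+1}+\al_{m+1}}/(m+2)!$ and $\E[S_{m+2}-T_{m+2}]=o(n^{\tau_{m+1}+\al_{m+1}})$.

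\emph{Lower deviations.} A useful arithmetic fact, checked first: from $\binom{m+2}{i+1}=\tfrac{m+2}{m+1-i}\binom{m+1}{i+1}\ge\tfrac{m+2}{m}\binom{m+1}{i+1}$ ($1\le i\le m$) and the first part of \eqref{e:pos.assump} one gets $\sum_{i=1}^m\binom{m+1}{i+1}\al_i<m$, i.e.\ $\tau_m>1$, and then \eqref{e:tau.maximized.at.crit} forces $\tau_j>1$ for all $1\le j\le m$. Using $T_{m+2}\ge S_{m+2}-(S_{m+2}-T_{m+2})$ with $S_{m+2}-T_{m+2}\ge0$, for all large $n$,
$$
\P\big(T_{m+2}\le(1-\vep)\E[T_{m+2}]\big)\le\P\big(S_{m+2}\le(1-\tfrac\vep2)\E[S_{m+2}]\big)+\P\big(S_{m+2}-T_{m+2}\ge\tfrac\vep3\E[T_{m+2}]\big).
$$
The first term is handled as in Propositions~\ref{p:lower.bdd.with.leading.term}--\ref{p:lower.LD.leading.term2} --- stripping the auxiliary terms $R_j,S_j$ via FKG/Markov, then applying the \emph{upper} (classical Janson) lower-tail bound in place of the lower bound of Theorem~2 of \cite{janson:warnke:2016}, and, when $\tau_{m+1}+\al_{m+1}>\tau_q$, comparing with the $q$-face count $Y_q$ --- and is at most $\exp\big(-c_\vep\,n^{\tau_q\wedge(\tau_{m+1}+\al_{m+1})}\big)$. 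The second term is a deviation far above the (lower-order) mean of $S_{m+2}-T_{m+2}$ and is $\le\exp(-c_\vep n^{\delta})$, $\delta>0$, by Markov's inequality applied to a suitable power. Both are summable in $n$, so $\liminf_n T_{m+2}/n^{\tau_{m+1}+\al_{m+1}}\ge1/(m+2)!$ a.s.

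\emph{Upper deviations and conclusion.} By \eqref{e:T.S.R.inequ}, $T_{m+2}\le S_{m+2}$, so it remains to bound $\P(S_{m+2}\ge(1+\vep)\E[S_{m+2}])$. Splitting $\mathrm{Var}(S_{m+2})=\sum_{\al,\beta}\mathrm{Cov}(I_\al^{(m+2)},I_\beta^{(m+2)})$ by $\ell=|\al\cap\beta|$: the terms with $\ell\le q$ vanish, each class $q+1\le\ell\le m+1$ contributes at most $Cn^{2(m+2)-\ell}\prod_{i=q}^m p_i^{2\binom{m+2}{i+1}-\binom{\ell}{i+1}}=Cn^{2(\tau_{m+1}+\al_{m+1})-\tau_{\ell-1}}$, which is $o\big(n^{2(\tau_{m+1}+\al_{m+1})}\big)$ since $\tau_{\ell-1}>1$, and the diagonal contributes $O(n^{\tau_{m+1}+\al_{m+1}})$; hence $\mathrm{Var}(S_{m+2})=o\big(n^{2(\tau_{m+1}+\al_{m+1})}\big)$. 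Since the $I_\al^{(m+2)}$ depend on boundedly many coordinates of $\Gamma_\bp$, a standard upper-tail concentration inequality for such sums gives $\P(S_{m+2}\ge(1+\vep)\E[S_{m+2}])\le\exp(-c_\vep n^{\theta})$ for some $\theta>0$, again summable; Borel--Cantelli yields $\limsup_n S_{m+2}/n^{\tau_{m+1}+\al_{m+1}}\le1/(m+2)!$ a.s., and combining with the preceding step completes the proof. The crux of the argument is the lower-deviation step: carrying the Janson-type and $Y_q$-comparison machinery of Theorem~\ref{t:lower.LD} over from the clean count $S_{m+2}$ to $T_{m+2}$, which again requires peeling off the maximality constraint (as in \eqref{e:change.to.comb}) and checking that the relevant dependency ratios stay bounded --- exactly where $\tau_m>1$ and the unimodality of $(\tau_j)$ enter; the first-moment step and the upper-tail input are routine.
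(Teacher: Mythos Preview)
Your first-moment computation and the Janson lower-tail bound for $S_{m+2}$ are correct, but both upper-tail steps are genuine gaps.

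First, the assertion that ``a standard upper-tail concentration inequality'' yields $\P\big(S_{m+2}\ge(1+\vep)\E[S_{m+2}]\big)\le\exp(-c_\vep n^{\theta})$ is unsupported. Upper tails for subgraph/subcomplex counts are notoriously delicate --- this is the ``infamous upper tail'' problem --- and no off-the-shelf inequality delivers exponential decay here; the Janson bound has no upper-tail analogue of comparable strength, and a bounded-differences argument fails because flipping a single $q$-face can change $S_{m+2}$ by order $n^{m+1-q}$. Second, ``Markov's inequality applied to a suitable power'' cannot produce $\P\big(S_{m+2}-T_{m+2}\ge\tfrac{\vep}{3}\E[T_{m+2}]\big)\le\exp(-c_\vep n^\delta)$: Markov on any fixed moment gives only polynomial decay, and you have not controlled moments of order growing with $n$. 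Summability is genuinely at stake, since assumption \eqref{e:pos.assump} only guarantees $\taual>0$; this quantity can be strictly below $1$ (e.g.\ $q=1$, $\al_1=0.41$, $\al_2=\al_3=0$, $k=2$, $m=3$, $\al_4$ small), so even the Chebyshev bound $n^{-(\taual)}$ is not summable over $n$.

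The paper circumvents both difficulties by never invoking exponential concentration. It passes to a polynomially fast subsequence $v_N=N^\gamma$ with $\gamma(\taual)>1$, constructs stochastic sandwich bounds $T_{m+2}^\downarrow\stackrel{st}{\le}T_{m+2}\le S_{m+2}\stackrel{st}{\le}S_{m+2}^\uparrow$ whose outer terms depend only on $N$, and then applies plain Chebyshev together with the variance estimates of Lemma~\ref{l:variance.asym2} to get bounds of order $N^{-\gamma\tau_q}\vee N^{-\gamma(\taual)}$, summable in $N$. Your Janson-based lower-tail idea is sharper where it applies, but it does not cover the upper-tail side, and that is exactly the obstruction the subsequence method is designed to remove.
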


The main difficulty here is that the power of $n$ in the scaling
constant of  $T_{m+2}$ above may not be large enough for a standard  use of
the Borel-Cantelli lemma.  We overcome this issue using a known
technique of concentrating first on 
subsequences increasing faster than linearly fast. In our context this
requires properly bounding $T_{m+2}$ 
from above and below first.  
This approach is a higher-dimensional analog of a technique in Chapter
3 of the monograph \cite{penrose:2003}, which  can  be  used, for
example, to show the SLLN for subgraph counts in random geometric
graphs. There are several related works \cite{owada:2022,
  owada:wei:2022} that used  this method to show SLLNs for topological
invariants of a random geometric complex. 

The proof of Proposition \ref{p:SLLN.T.m+2} requires some setup
first. Let  $v_N=N^\gamma$ for $\gamma =1,2,\dots$ such that  
\begin{equation}  \label{e:gamma.constraint}
\gamma(\taual)>1. 
\end{equation}
Then, for every $n\ge1$, there exists a unique $N=N(n)\in \bbn$ such
that $v_N\le n < v_{N+1}$. For each $i\ge1$, let 
$$  
p_i^\uparrow = v_N^{-\al_i}, \ \  \text{ and } \ \ p_i^\downarrow =v_{N+1}^{-\al_i}. 
$$

Now, we are ready to set up the desired  bounds for
$T_{m+2}$.  We define 
\begin{equation}  \label{e:def.S.m+2.uparrow}
S_{m+2}^\uparrow  =\sum_{\sigma\subset [v_{N+1}], \, |\sigma|=m+2} \xi_\sigma^{(m+2,m,\uparrow)}, 
\end{equation}
where $\xi_\sigma^{(m+2,m,\uparrow)}$ is the indicator of the event on
 which   the complex  $X\big(
\sigma, p_1^\uparrow, \dots, p_m^\uparrow, p_{m+1}^\uparrow \big)$  
forms a trivial or non-trivial $m$-cycle (since
\eqref{e:def.S.m+2.uparrow} counts the number of  $m$-cycles, there is
no need to specify connectivity probabilities of dimension greater
than $m+1$).  
Similarly, we define 
\begin{equation}  \label{e:def.T.m+2.downarrow}
T_{m+2}^\downarrow = \sum_{\sigma\subset [v_N], \, |\sigma|=m+2} \eta_\sigma^{(m+2,1,m,\downarrow)}. 
\end{equation}
Here, $\eta_\sigma^{(m+2,1,m,\downarrow)}$ is the indicator function of the event 
on which   the complex $X\big(
\sigma, p_1^\downarrow, \dots, p_m^\downarrow, p_{m+1}^\uparrow \big)$
forms  a non-trivial $m$-cycle 
and no vertex $s\in
[v_{N+1}]\setminus \sigma$ forms an $m$-face with an $(m-1)$-face in
$\sigma$, under the connectivity probabilities $(p_1^\uparrow, \dots,
p_m^\uparrow)$.  

Given random variables $X$ and $Y$, denote by $X\stackrel{st}{\le}Y$
the stochastic domination meaning that $\P(X\le x)\ge \P(Y\le x)$ for
all $x\in \R$. Recalling \eqref{e:def.T.jr} and
\eqref{e:def.Sj}, it   is clear that 
\begin{equation}  \label{e:stoha.domination1}
S_{m+2} \stackrel{st}{\le} S_{m+2}^\uparrow, \ \ \text{ and } \ \ T_{m+2}^\downarrow\stackrel{st}{\le} T_{m+2}. 
\end{equation}
 
\begin{proof}[Proof of Proposition \ref{p:SLLN.T.m+2}]
According to the stochastic domination in \eqref{e:stoha.domination1}, 
$$
\frac{T_{m+2}^\downarrow}{v_{N+1}^\taual} \stackrel{st}{\le} \frac{T_{m+2}}{n^\taual} \stackrel{st}{\le} \frac{S_{m+2}^\uparrow}{v_N^\taual}, 
$$
for every $n\ge1$. By the first Borel-Cantelli lemma, the claim of the
proposition will follow once  we show that for every $\vep>0$, 
\begin{align}
&\sum_{N=1}^\infty \P\Big( S_{m+2}^\uparrow \ge \frac{(1+\vep)^2}{(m+2)!}\, v_N^\taual \Big) <\infty,\label{e:BC1}\\
&\sum_{N=1}^\infty \P\Big( T_{m+2}^\downarrow \le  \frac{(1+\vep)^{-2}}{(m+2)!}\, v_{N+1}^\taual   \Big) <\infty. \label{e:BC2}
\end{align}
For the proof of \eqref{e:BC1},  we have 
$$
\E[S_{m+2}^\uparrow] = \binom{v_{N+1}}{m+2}\prod_{i=q}^m (p_i^\uparrow)^{\binom{m+2}{i+1}}  = \binom{v_{N+1}}{m+2}v_N^{-\sum_{i=q}^m \binom{m+2}{i+1}\al_i}. 
$$
As $v_{N+1}/v_N\to 1$ as $N\to\infty$, we have 
$$
\E[S_{m+2}^\uparrow] \sim \frac{v_N^\taual}{(m+2)!}, \ \ \ N\to\infty, 
$$
hence, there exists $N_0\in \bbn$ such that for all $N\ge N_0$, 
$$
\E[S_{m+2}^\uparrow] \le \frac{1+\vep}{(m+2)!}\, v_N^\taual. 
$$
By  Chebyshev's inequality and \eqref{e:Var.S.m+2.uparrow} in Lemma \ref{l:variance.asym2} of  the Appendix, 
\begin{align*}
\sum_{N=1}^\infty \P\Big( S_{m+2}^\uparrow \ge \frac{(1+\vep)^2}{(m+2)!}\, v_N^\taual \Big) &\le \sum_{N=1}^\infty \P\Big( S_{m+2}^\uparrow-\E[S_{m+2}^\uparrow] \ge \frac{\vep (1+\vep)}{(m+2)!}\, v_N^\taual \Big) \\
&\le C^* \sum_{N=1}^\infty \frac{\text{Var}(S_{m+2}^\uparrow)}{v_N^{2(\taual)}}  \\
&\le C^* \sum_{N=1}^\infty  \frac{v_{N+1}^{2(\taual)-\tau_q}\vee v_{N+1}^\taual}{v_N^{2(\taual)}} \\
&\le C^*  \sum_{N=1}^\infty \big( N^{-\gamma \tau_q} \vee N^{-\gamma (\taual)} \big). 
\end{align*}
By \eqref{e:gamma.constraint} and  the fact that $\tau_q>q\ge 1$, the last sum is  finite. 

We now prove \eqref{e:BC2}. We claim that one can write 
\begin{equation}  \label{e:E.T.m+2.downarrow}
\E[T_{m+2}^\downarrow] = \binom{v_N}{m+2}\prod_{i=q}^m (p_i^\downarrow)^{\binom{m+2}{i+1}} (1-p_{m+1}^\uparrow) \Big( 1-\binom{m+2}{m}v_{N}^{-\psi_m} + u_N \Big)^{v_{N+1}-(m+2)}, 
\end{equation}
where $u_N=o(v_{N}^{-\psi_m})$ 
as $N\to\infty$.  Indeed, for every $\sigma \subset [v_N]$ with
$|\sigma|=m+2$, the probability that   $X\big( \sigma,
p_1^\downarrow,\dots,p_m^\downarrow,p_{m+1}^\uparrow \big)$ forms a
non-trivial $m$-cycle is given by $\prod_{i=q}^m (p_i^\downarrow)^{\binom{m+2}{i+1}} (1-p_{m+1}^\uparrow)$. Further, $\sigma$ contains $\binom{m+2}{m}$ faces of dimension $m-1$. 
Consider a vertex $w\in [v_{N+1}]\setminus \sigma$ and an $(m-1)$-face $f$ in $\sigma$. Then, $w$ and $f$ form an $m$-face with probability  
$\prod_{i=q}^m (p_i^\uparrow)^{\binom{m}{i}} = v_{N}^{-\psi_m}$. 
By the inclusion-exclusion formula, $w$ forms an $m$-face with at
least one of the $(m-1)$-faces in $\sigma$, with probability
$\binom{m+2}{m}v_{N}^{-\psi_m}- u_N$, with $u_N$ accounting for the
probability that multiple $m$-faces are formed between $w$ and the
$(m-1)$-faces in $\sigma$. Since there are at least $v_{N+1}-(m+2)$ vertices
in $[v_{N+1}]\setminus \sigma$,  the probability that no vertex $s\in
[v_{N+1}]\setminus \sigma$ forms an $m$-face with an $(m-1)$-face in
$\sigma$ can be written as 
\begin{equation}  \label{e:maximality.sigma}
\Big( 1-\binom{m+2}{m}v_{N}^{-\psi_m} + u_N \Big)^{v_{N+1}-(m+2)}, 
\end{equation}
and \eqref{e:E.T.m+2.downarrow} follows. 

As $\psi_m>1$ and $\al_{m+1}>0$, we see that \eqref{e:maximality.sigma} tends to $1$ as $N\to\infty$, and $1-p_{m+1}^\uparrow\to1$, $N\to\infty$. Therefore, 
$$
\E[T_{m+2}^\downarrow] \sim\frac{v_N^{m+2}}{(m+2)!}\, v_{N+1}^{-\sum_{i=q}^m \binom{m+2}{i+1}\al_i}\sim \frac{v_{N+1}^\taual}{(m+2)!}, \ \ \ N\to\infty, 
$$
and so, there exists $N_1\in \bbn$ such that for all $N\ge N_1$, 
$$
\E[T_{m+2}^\downarrow] \ge \frac{(1+\vep)^{-1}}{(m+2)!}\, v_{N+1}^\taual.
$$
Once again, by Chebyshev's inequality and \eqref{e:Var.T.m+2.downarrow} of Lemma \ref{l:variance.asym2} conclude that 
\begin{align*}
\sum_{N=1}^\infty \P\Big( T_{m+2}^\downarrow \le  \frac{(1+\vep)^{-2}}{(m+2)!}\, v_{N+1}^\taual   \Big) &\le \sum_{N=1}^\infty \P\Big( T_{m+2}^\downarrow  - \E[T_{m+2}^\downarrow ]\le  -\frac{\vep(1+\vep)^{-2}}{(m+2)!}\, v_{N+1}^\taual   \Big)\\
&\le C^*\sum_{N=1}^\infty \big(N^{-\gamma \tau_q}\vee N^{-\gamma (\taual)}  \big) <\infty, 
\end{align*}
as required. 
\end{proof}

In the next proposition we start addressing the rest of the terms in the representation
\eqref{e:repre.Betti} of the Betti numbers. 
 
\begin{proposition}   \label{p:SLLN.T.jr}
Assume \eqref{e:pos.assump}. Then for every $j\ge m+3$ and $r\ge1$, 
$$
\frac{T_{j,r}}{n^\taual} \to 0 \ \ \text{a.s. as } \ n\to\infty.
$$
\end{proposition}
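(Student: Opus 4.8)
The plan is to show that $T_{j,r}$, which counts the number of maximal $m$-strongly connected subcomplexes on $j$ vertices with $m$-th Betti number equal to $r$, has an expectation that is of strictly smaller polynomial order than $n^{\tau_{m+1}+\alpha_{m+1}}$, and then use a Borel--Cantelli argument along rapidly growing subsequences exactly as in the proof of Proposition~\ref{p:SLLN.T.m+2}. First I would bound $T_{j,r} \le S_j \le R_j$ via \eqref{e:T.S.R.inequ}, reducing the claim to showing $R_j / n^{\tau_{m+1}+\alpha_{m+1}} \to 0$ a.s. Then, arguing as in the proof of Proposition~\ref{p:lower.bdd.with.leading.term}, Lemma~\ref{l:estimate.st.conn} (or the face-count bound from \cite[Lemma 8.1]{fowler:2019}) gives
\begin{equation*}
\E[R_j] \le C_j\, n^{\tau_m - (j-m-1)(\psi_m-1)} = C_j\, n^{\tau_m}\bigl(n^{-(\psi_m-1)}\bigr)^{j-m-1},
\end{equation*}
which, since $\psi_m > 1$ by \eqref{e:pos.assump} (the first condition $\tau_{m+1}+\alpha_{m+1}>0$ forces $\psi_m>1$, cf.\ \eqref{e:def.D}), already decays polynomially once $j \ge m+3$. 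The key numerical point to verify is that $\tau_m - (\psi_m-1) < \tau_{m+1}+\alpha_{m+1}$, i.e.\ that even the $j=m+3$ term is of smaller order than the scaling $n^{\tau_{m+1}+\alpha_{m+1}}$; this is a short computation using $\tau_{m+1} = \tau_m + 1 - \psi_{m+1}$ and $\psi_{m+1} - \psi_m = \sum_{i=1}^{m+1}\binom{m+1}{i}\alpha_i - \binom{m+1}{m+1}\alpha_{m+1} + \cdots$, together with $\alpha_{m+1}<\infty$.

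Having the expectation bound, I would fix $\gamma \in \bbn$ with $\gamma(\tau_{m+1}+\alpha_{m+1}) > 1$, set $v_N = N^\gamma$, and for $n$ with $v_N \le n < v_{N+1}$ dominate $T_{j,r}$ stochastically (monotonicity of the relevant indicators in $\Gamma_\bp$, as in \eqref{e:stoha.domination1}) by the corresponding count $R_j^\uparrow$ built from the complex on $[v_{N+1}]$ with the inflated probabilities $p_i^\uparrow = v_N^{-\alpha_i}$. Then $\E[R_j^\uparrow] \le C_j\, v_{N+1}^{\tau_m - (j-m-1)(\psi_m-1)}$, and since $j\ge m+3$ this is $O(v_N^{\tau_{m+1}+\alpha_{m+1} - \delta})$ for some $\delta>0$; Markov's inequality gives
\begin{equation*}
\P\Bigl(R_j^\uparrow \ge \vep\, v_N^{\tau_{m+1}+\alpha_{m+1}}\Bigr) \le \vep^{-1}\, \E[R_j^\uparrow]\, v_N^{-(\tau_{m+1}+\alpha_{m+1})} \le C\, N^{-\gamma\delta},
\end{equation*}
which is summable in $N$ provided $\gamma$ is chosen large enough (enlarging $\gamma$ only helps, as it preserves \eqref{e:gamma.constraint}). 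The first Borel--Cantelli lemma then yields $R_j^\uparrow / v_N^{\tau_{m+1}+\alpha_{m+1}} \to 0$ a.s., and since $v_N/n \to 1$ and $T_{j,r} \le R_j \le R_j^\uparrow$ along the blocks, we conclude $T_{j,r}/n^{\tau_{m+1}+\alpha_{m+1}} \to 0$ a.s.

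The main obstacle is the arithmetic of exponents: one must confirm that the worst case $j=m+3$ genuinely produces an exponent strictly below $\tau_{m+1}+\alpha_{m+1}$, so that a summable tail bound (and hence Borel--Cantelli without any variance input) is available. Unlike Proposition~\ref{p:SLLN.T.m+2}, where the scaling is exactly the order of $\E[T_{m+2}]$ and one is forced to use second moments, here the strict drop in polynomial order for $j \ge m+3$ means a pure first-moment Markov bound suffices, so no variance lemma is needed — but this is precisely why the exponent inequality must be checked carefully. A secondary bookkeeping point is that $r$ ranges over $1 \le r \le \binom{j}{m+1}$, a bounded range for fixed $j$, so $\sum_r T_{j,r} \le S_j$ absorbs the sum over $r$ at no cost.
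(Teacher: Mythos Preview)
Your overall strategy—stochastically dominating $T_{j,r}$ by an upper-bound count on $[v_{N+1}]$ with inflated probabilities, then invoking Borel--Cantelli along $v_N=N^\gamma$—is sound, and your observation that a pure first-moment (Markov) bound suffices here is correct and in fact yields a simpler argument than the paper's, which passes through Chebyshev and the variance estimate \eqref{e:Var.S.m+2.uparrow}.

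The gap is in the exponent arithmetic. You bound via $R_j$ and write $\E[R_j]\le C_j\,n^{\tau_m-(j-m-1)(\psi_m-1)}$; this is the face-count coming from a spanning $m$-strongly connected complex \emph{without} the cycle requirement (Fowler's Lemma~8.1). The ``key numerical point'' you flag, that this exponent falls strictly below $\tau_{m+1}+\alpha_{m+1}$ for $j=m+3$, is \emph{not} implied by \eqref{e:pos.assump}. Concretely, take $q=k=1$, $m=2$, $\alpha_1=0.6$, $\alpha_2=0$, $\alpha_3=0.1$: then $\psi_2=1.2>1$, $\tau_{m+1}+\alpha_{m+1}=0.4>0$, so the hypotheses hold, but $\tau_m-2(\psi_m-1)=1.2-0.4=0.8>0.4$. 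So the Markov bound via $R_j$ does not give $o(v_N^{\tau_{m+1}+\alpha_{m+1}})$, and the argument breaks.

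The fix is to stop at $S_j$ rather than pass to $R_j$, and to use Lemma~\ref{l:estimate.st.conn} (which \emph{does} require a cycle and hence gives one extra $m$-face worth of constraints). That lemma yields
\[
\E[S_j^\uparrow]\le C_j\,v_N^{\tau_{m+1}+\alpha_{m+1}}\bigl(v_N^{1-\psi_m}\bigr)^{j-m-2},
\]
exactly as in the paper's proof, so for $j\ge m+3$ the exponent drops below $\tau_{m+1}+\alpha_{m+1}$ by at least $\psi_m-1>0$. With this correction, Markov gives $\P(S_j^\uparrow\ge\vep v_N^{\tau_{m+1}+\alpha_{m+1}})\le C_j N^{-\gamma(\psi_m-1)}$, summable once $\gamma>1/(\psi_m-1)$, and Borel--Cantelli finishes. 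This avoids the variance lemma entirely, while the paper instead uses Chebyshev so as to keep the same $\gamma$ as in Proposition~\ref{p:SLLN.T.m+2}.
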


\begin{proof}
  
Analogously to \eqref{e:def.S.m+2.uparrow} we define 
\begin{equation}  \label{e:def.Sj2.uparrow}
S_j^{\uparrow} := \sum_{\sigma\subset [v_{N+1}], \, |\sigma|=j}
\xi_\sigma^{(j,m, \uparrow)}, \ \ \ j \ge m+3, 
\end{equation}
where, $\xi_\sigma^{(j,m, \uparrow)}$ is the indicator of
the event on which  $X\big( \sigma,
p_1^\uparrow,\dots,p_m^\uparrow, p_{m+1}^\uparrow \big)$ contains an
$m$-strongly connected subcomplex  with at least 
one trivial or non-trivial $m$-cycle.  Clearly,  
$S_j\stackrel{st}{\le} S_j^{\uparrow}$ for all $j\ge m+3$, so that 
$$
\frac{T_{j,r}}{n^\taual} \le \frac{S_j}{n^\taual}  \stackrel{st}{\le} \frac{S_j^\uparrow}{v_N^\taual}
$$
  for every  $n\ge1$ and $r\ge1$, and 
by the first Borel-Cantelli lemma, it is suffices to show that for every $\vep>0$, 
\begin{equation}  \label{e:BC3}
\sum_{N=1}^\infty \P(S_j^{\uparrow} \ge \vep v_N^{\taual})<\infty. 
\end{equation}
Note that 
$$ 
\E[S_j^{\uparrow}] = \sum_{K:|K|=j}\binom{v_{N+1}}{j} \P \big( X\big(
\sigma, p_1^\uparrow, \dots, p_m^\uparrow,p_{m+1}^\uparrow \big)
 \  \text{contains a subcomplex isomorphic to} \ K \big), 
$$
where the sum is taken over all isomorphism classes of $m$-strongly
connected complexes on $j$ vertices, with at least one trivial or
non-trivial $m$-cycle.  It follows from Lemma \ref{l:estimate.st.conn}
and  $v_{N+1}/v_N\to1$ as $N\to\infty$, that for a
$j$-dependent constant $C_j$, 
\begin{align*}
\E[S_j^{\uparrow}] &\le C_j v_{N+1}^j\prod_{i=q}^m (p_i^\uparrow)^{\binom{m+2}{i+1}} \Big\{  \prod_{i=q}^m (p_i^\uparrow)^{\binom{m}{i}}\Big\}^{j-m-2} \\
&\le C_j v_N^\taual \big(v_N^{1-\psi_m}\big)^{j-m-2} = o \big(v_N^\taual  \big),  \ \ \ N\to\infty, 
\end{align*}
where the last equality is due to $\psi_m>1$ (and $j\ge m+3$). Hence, for sufficiently large $N$, 
\begin{align}
\begin{split}  \label{e:Sj2.uparrow.summable}
\P(S_j^{\uparrow} \ge \vep v_N^{\taual}) &\le \P \Big( S_j^{\uparrow} - E[S_j^{\uparrow}] \ge \frac{\vep}{2} v_N^\taual \Big)   \\
&\le \frac{4}{\vep^2}\cdot \frac{\text{Var}(S_j^{\uparrow})}{v_N^{2(\taual)}} \le C_j \big( N^{-\gamma \tau_q}\vee N^{-\gamma (\taual)} \big), 
\end{split}
\end{align}
where the last inequality follows  from \eqref{e:Var.S.m+2.uparrow} in
Lemma \ref{l:variance.asym2} of the Appendix. The last term in
\eqref{e:Sj2.uparrow.summable} is summable, so \eqref{e:BC3}
follows.
\end{proof}

It follows from Propositions \ref{p:SLLN.T.m+2} and \ref{p:SLLN.T.jr} 
that 
$$
\frac{1}{n^\taual} \Big( T_{m+2} + \sum_{j=m+3}^{D+m}
\sum_{r\ge1}rT_{j,r} \Big) \to \frac{1}{(m+2)!} \ \ \text{a.s. as } \
n\to\infty, 
$$
where $D$ is a positive  integer satisfying \eqref{e:def.D}.  The
proof of Theorem \ref{t:SLLN}  is now  completed by  Proposition
\ref{p:SLLN.residual} below and   \eqref{e:repre.Betti}. 

\begin{proposition}  \label{p:SLLN.residual}
Assume \eqref{e:pos.assump}. 
We have, as $n\to\infty$, 
$$
\frac{1}{n^\taual} \sum_{j=D+m+1}^n \sum_{r\ge1}rT_{j,r} \to 0, \ \ \ \text{a.s.}
$$
\end{proposition}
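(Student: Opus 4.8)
The plan is to bound the tail sum $\sum_{j=D+m+1}^n\sum_{r\ge1}rT_{j,r}$ from above by $n^{m+1}R_{D+m+1}$, exactly as in the proof of Proposition \ref{p:lower.bdd.with.leading.term}: the $m$-dimensional Betti number of a complex on $j$ vertices is at most $\binom{j}{m+1}\le n^{m+1}$, and the maximality of the $m$-strongly connected subcomplexes being counted ensures that every maximal piece of size $j\ge D+m+1$ contains an $m$-strongly connected subcomplex on $D+m+1$ vertices which is not double-counted. Hence it suffices to show $n^{m+1}R_{D+m+1}/n^{\tau_{m+1}+\al_{m+1}}\to0$ a.s. Since $R_{D+m+1}$ is $\bbn$-valued, by the first Borel--Cantelli lemma it is enough to prove, along the full sequence (not a subsequence), that for every $\vep>0$ the series $\sum_n\P\big(n^{m+1}R_{D+m+1}\ge\vep\,n^{\tau_{m+1}+\al_{m+1}}\big)$ converges; here the point is that the power of $n$ in the scaling is now comfortably large because of the choice of $D$ in \eqref{e:def.D}, so no subsequence trick is needed.

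To make this work I would first pass to the monotone upward-coupled version $R_{D+m+1}^\uparrow$ on $[v_{N+1}]$ under the inflated probabilities $(p_1^\uparrow,\dots,p_{m+1}^\uparrow)$, as was done for $S_j^\uparrow$ in the proof of Proposition \ref{p:SLLN.T.jr}; stochastic domination $R_{D+m+1}\stackrel{st}{\le}R_{D+m+1}^\uparrow$ and $v_{N+1}/v_N\to1$ then reduce everything to a statement about $R_{D+m+1}^\uparrow$. By the argument in \eqref{e:upper.bdd.E.R.D+m+1.1}--\eqref{e:upper.bdd.E.R.D+m+1.2} (valid verbatim with $p_i$ replaced by $p_i^\uparrow$, using that each $m$-strongly connected complex on $D+m+1$ vertices has at least $\binom{m+1}{i+1}+D\binom{m}{i}$ faces of dimension $i$, $q\le i\le m$), one gets
$$
\E[R_{D+m+1}^\uparrow]\le C^*\,v_N^{\tau_m-D(\psi_m-1)},
$$
so that $n^{m+1}\E[R_{D+m+1}^\uparrow]\le C^* v_N^{m+1+\tau_m-D(\psi_m-1)}=o\big(v_N^{\tau_{m+1}+\al_{m+1}}\big)$ by \eqref{e:def.D}. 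A Chebyshev bound together with a variance estimate $\mathrm{Var}(R_{D+m+1}^\uparrow)\le C^*\big(v_{N+1}^{2(\tau_{m+1}+\al_{m+1})-\tau_q}\vee v_{N+1}^{\tau_{m+1}+\al_{m+1}}\big)$ — which should follow from the same second-moment computation that underlies Lemma \ref{l:variance.asym2}, since $R_j$ is again a sum of increasing indicators over $\binom{n}{j}$ vertex sets with the same two-subgraph overlap structure — then yields a summand of order $N^{-\gamma\tau_q}\vee N^{-\gamma(\tau_{m+1}+\al_{m+1})}$, which is summable because $\gamma(\tau_{m+1}+\al_{m+1})>1$ and $\tau_q>q\ge1$. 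In fact, one can likely avoid the variance estimate entirely and argue along the full sequence $n$: $n^{m+1+\tau_m-D(\psi_m-1)}$ and the Markov bound $\P(n^{m+1}R_{D+m+1}\ge \vep n^{\tau_{m+1}+\al_{m+1}})\le \vep^{-1}n^{m+1}\E[R_{D+m+1}]/n^{\tau_{m+1}+\al_{m+1}}$ decay polynomially, but not necessarily fast enough to be summable in $n$; so the safe route is the subsequence-plus-sandwiching device already in place, using $R_{D+m+1}^\uparrow$ for the upper side and no lower side (since $R_{D+m+1}\ge0$ trivially gives the lower bound $0$).

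The main obstacle is the variance bound for $R_{D+m+1}^\uparrow$: unlike $T_{m+2}$ and $S_{m+2}$, whose variances are recorded in Lemma \ref{l:variance.asym2}, the quantity $R_{D+m+1}$ is a count over $\binom{n}{D+m+1}$ vertex sets of a more permissive event (``contains an $m$-strongly connected subcomplex''), so one must check that the covariance contributions from pairs of overlapping vertex sets are still dominated by the diagonal term at the right power of $n$. This is a bookkeeping exercise of the same flavour as the proof of Lemma \ref{l:variance.asym2} — classify pairs $(\sigma,\sigma')$ by $|\sigma\cap\sigma'|$, bound the joint probability using the minimal face-count guarantee of Lemma \ref{l:estimate.st.conn} applied to the union, and observe that the maximal overlap contribution does not exceed $v_{N+1}^{2(\tau_{m+1}+\al_{m+1})-\tau_q}\vee v_{N+1}^{\tau_{m+1}+\al_{m+1}}$ — so I would either cite the appendix lemma in the (mild) generality needed or include a short remark that its proof applies mutatis mutandis to $R_j^\uparrow$. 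Everything else is a direct transcription of the Borel--Cantelli / Chebyshev scheme used in Propositions \ref{p:SLLN.T.m+2} and \ref{p:SLLN.T.jr}.
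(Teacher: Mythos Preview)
Your approach would ultimately work, but it is far more elaborate than necessary, and the reason you took the long route is a misreading of the strength of \eqref{e:def.D}. You write that the Markov bound ``decay[s] polynomially, but not necessarily fast enough to be summable in $n$''; in fact the constant $D$ was chosen precisely so that $D(\psi_m-1)>m+2+\tau_m$, i.e.\ $m+1+\tau_m-D(\psi_m-1)<-1$, which makes $\sum_n n^{m+1+\tau_m-D(\psi_m-1)}<\infty$. Since $\sum_{j\ge D+m+1}\sum_{r\ge1}rT_{j,r}$ is a nonnegative integer, the paper simply applies Markov with threshold $1$:
\[
\sum_{n\ge1}\P\Big(\sum_{j=D+m+1}^n\sum_{r\ge1}rT_{j,r}\ge1\Big)\le\sum_{n\ge1}\E\Big[\sum_{j=D+m+1}^n\sum_{r\ge1}rT_{j,r}\Big]\le C^*\sum_{n\ge1}n^{m+1+\tau_m-D(\psi_m-1)}<\infty,
\]
and Borel--Cantelli gives that the tail sum is eventually zero. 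No subsequence, no upward coupling, no variance bound is needed.

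Your alternative via $R_{D+m+1}^\uparrow$ and Chebyshev is not wrong, but it leans on a variance estimate for $R_j^\uparrow$ that is \emph{not} covered by Lemma~\ref{l:variance.asym2}: that lemma treats $S_j^\uparrow$, whose indicators require an $m$-cycle and hence enjoy the stronger face-count lower bound of Lemma~\ref{l:estimate.st.conn}, whereas $R_j$ uses only the weaker bound $\binom{m+1}{i+1}+(j-m-1)\binom{m}{i}$. The analogous covariance bookkeeping can be carried out, but it is extra work you would be doing only because you overlooked that the exponent is already below $-1$.
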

\begin{proof}
Repeating the same argument as in \eqref{e:upper.bdd.E.R.D+m+1.1} and
\eqref{e:upper.bdd.E.R.D+m+1.2}, along with Markov's inequality, we
have, 
\begin{align*}
&\sum_{n=1}^\infty \P\Big( \sum_{j=D+m+1}^n \sum_{r\ge1}rT_{j,r} \ge
  1 \Big) \le   
\sum_{n=1}^\infty \sum_{j=D+m+1}^n \sum_{r\ge1} r\E[T_{j,r}]  \\
&\le C^* \sum_{n=1}^\infty n^{m+1+\tau_m-D(\psi_m-1)} < \infty
\end{align*}
by the choice of $D$. 
The first Borel-Cantelli lemma completes the proof of Proposition \ref{p:SLLN.residual}. 
\end{proof}
\medskip

\subsection{Proof of Theorem \ref{t:CLT}}

Theorem \ref{t:CLT} will be established by combining a series of
propositions. We start with  a  CLT for $S_{m+2}$, a special case of \eqref{e:def.Sj}. 

\begin{proposition}  \label{p:CLT.S.m+2}
Assume conditions \eqref{e:pos.assump} and \eqref{e:CLT.cond}.  As $n\to\infty$, 
$$
\frac{S_{m+2}-\E[S_{m+2}]}{\sqrt{\text{Var}(S_{m+2})}} \Rightarrow\mathcal N(0,1). 
$$
\end{proposition}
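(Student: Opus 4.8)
The plan is to establish the CLT for $S_{m+2}$ via the representation \eqref{e:reformulate.Sm2}, which writes $S_{m+2}=\sum_{\al\subset[n],\,|\al|=m+2}I_\al^{(m+2)}$ as a sum of indicators $I_\al^{(m+2)}=\one\{Q_{m+2}(\al)\subset\Gamma_\bp\}$, each depending on a fixed (large but $n$-independent) collection of the underlying Bernoulli variables. Because each indicator depends on boundedly many coordinates and two indicators $I_\al^{(m+2)}$, $I_\beta^{(m+2)}$ are independent unless $\al\sim\beta$ (i.e.\ $Q_{m+2}(\al)\cap Q_{m+2}(\beta)\neq\emptyset$), this is a classical \emph{dissociated} / locally dependent sum, and the natural tool is Stein's method for normal approximation with a dependency graph (e.g.\ the dependency-graph CLT of Baldi--Rinott, or the bound in Chen--Shao / Janson). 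First I would set up the dependency graph $\mathcal G$ on the index set $\{\al\subset[n]:|\al|=m+2\}$ with edges given by the relation $\sim$, verify the conditioning/independence structure, and record the basic quantities: the number of vertices $\sim n^{m+2}$, the maximal degree $D_n$, and the per-variable moment bounds $\E[I_\al^{(m+2)}]=\Pi(X_m)=n^{-\sum_{i=q}^m\binom{m+2}{i+1}\al_i}$ (so $\E[S_{m+2}]\sim a_m = n^{\taual}/(m+2)!$).

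Next I would quote the variance asymptotics. Under \eqref{e:pos.assump} and \eqref{e:CLT.cond} the variance of $S_{m+2}$ should satisfy $\mathrm{Var}(S_{m+2})\sim c\, n^{\tau_q}$ when $\taual>\tau_q$, $\sim c\, n^{\taual}$ when $\taual<\tau_q$, and the sum of the two orders when they coincide --- this is exactly the content of Lemma \ref{l:variance.asym2} (the same estimates already invoked for $S_{m+2}^\uparrow$ in the SLLN proof, with \eqref{e:Var.S.m+2.uparrow}), so I would simply cite it, noting the three regimes of Theorem \ref{t:CLT} come from comparing $\tau_q$ with $\taual$. Condition \eqref{e:CLT.cond} is precisely what is needed to control the ``triple-overlap'' third moment term $\sum_{\al\sim\beta,\,\beta\sim\gamma}$ that appears in the Stein bound: the exponent $\tfrac32(\tau_q\wedge(\taual)) + \bigvee_{\A_{m,k}}\{\tau_{\ell_{123}-1}-\tau_{\ell_{12}-1}-\tau_{\ell_{13}-1}-\tau_{\ell_{23}-1}\}$ is negative exactly when this term is $o\big((\mathrm{Var}\,S_{m+2})^{3/2}\big)$, which I would verify by a careful bookkeeping of how many faces each of the three overlapping $(m+2)$-sets must contain (via Lemma \ref{l:estimate.st.conn}-type face counts), translating the inclusion/overlap pattern into powers of $n$.

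Concretely, the key steps in order: (1) rewrite $S_{m+2}$ via \eqref{e:reformulate.Sm2}; (2) identify the dependency graph and its parameters; (3) invoke the dependency-graph normal approximation theorem, which bounds $d_W$ or $d_K$ between $(S_{m+2}-\E S_{m+2})/\sqrt{\mathrm{Var}\,S_{m+2}}$ and $\mathcal N(0,1)$ by a sum of terms of the shape $\sigma^{-2}\sum\E|I_\al|\E|I_\beta|$ and $\sigma^{-3}\sum\E|I_\al I_\beta I_\gamma|$ over edges/paths in $\mathcal G$; (4) plug in the variance asymptotics from Lemma \ref{l:variance.asym2} and the moment/overlap bounds, splitting the triple sum by the overlap pattern $(\ell_{12},\ell_{13},\ell_{23},\ell_{123})\in\A_{m,k}$ and using \eqref{e:CLT.cond} to see each term is $o(1)$; (5) conclude weak convergence. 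The main obstacle I expect is step (4): getting a clean upper bound on $\E[I_\al^{(m+2)}I_\beta^{(m+2)}I_\gamma^{(m+2)}]$ in terms of $n$ to a power expressible through the $\tau_{\ell}$'s, since one must count the union $Q_{m+2}(\al)\cup Q_{m+2}(\beta)\cup Q_{m+2}(\gamma)$ of required faces without double-counting the shared faces on the pairwise and triple intersections --- an inclusion--exclusion over face counts that is exactly mirrored by the combinatorial inequalities in \eqref{e:explcit.cond}, and matching the two is where the real work lies; the lower-order (e.g.\ when $\al,\beta,\gamma$ are not all pairwise related, or the degenerate cases $\ell_{123}$ at the boundary of its feasible range) cases need to be checked separately but are routine once the generic bound is in place.
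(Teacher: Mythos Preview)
Your approach is essentially the same as the paper's: write $S_{m+2}$ via \eqref{e:reformulate.Sm2} as a dissociated sum, apply a Stein-type dependency-graph bound (the paper uses Barbour--Karo\'nski--Ruci\'nski), and control the triple sum $\sum_\al\sum_{\beta\in\mL_\al}\sum_{\gamma\in\mL_\al}$ by splitting over the overlap sizes $\ell_{12},\ell_{13},\ell_{23},\ell_{123}$, with condition \eqref{e:CLT.cond} handling exactly the case $\ell_{ij}\in\{k+2,\dots,m+1\}$ and monotonicity of $\tau_\cdot$ handling the rest. Two small slips to fix when you write it up: the variance asymptotics are in Lemma~\ref{l:variance.asym1} (not \ref{l:variance.asym2}), and in the regime $\taual>\tau_q$ one has $\mathrm{Var}(S_{m+2})\sim c\,n^{2(\taual)-\tau_q}$, not $c\,n^{\tau_q}$.
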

\begin{proof}
Our proof takes an approach similar  to that of Proposition 3.6 in
\cite{owada:samorodnitsky:thoppe:2021}.  
Using \eqref{e:reformulate.Sm2} we write 
$$
W=: \frac{S_{m+2}-\E[S_{m+2}]}{\sqrt{\text{Var}(S_{m+2})}}
 =\sum_{\sigma \subset [n], \,
  |\sigma|=m+2} Z_\sigma,
$$
where 
$$
Z_\sigma=\frac{I_\sigma^{(m+2)} - \E[I_\sigma^{(m+2)}] }{\sqrt{\text{Var}(S_{m+2})}}. 
$$
Evidently we have $\E[W]=0$, $\text{Var}(W)=1$, and further, $(Z_\sigma)$ constitute a
\emph{dissociated set} of random variables in the sense of Barbour et
al.~\cite{barbour:karonski:rucinski:1989}. Namely, as in \eqref{e:def.Qm2}, we identify each $\sigma \subset [n], \,
  |\sigma|=m+2$ with the collection $Q_q(\sigma)$ of its subsets of
  size $q+1$. Then for every two collections 
  $K, L$ of subsets of $[n]$ of length $m+2$, if
$$
\bigcup_{\sigma\in K}  Q_q(\sigma) \cap \bigcup_{ \sigma \in L}  Q_q(\sigma) =\emptyset, 
$$
then $(Z_\sigma, \, \sigma\in K)$ and $(Z_\sigma, \,  \sigma\in L)$
are independent.
The weak convergence claimed in the proposition 
will follow once we show that  
  the $L_1$-Wasserstein distance between the laws of $W$ and the
  standard normal random variable $Y$,   
\begin{equation}  \label{e:Wasserstein.metric}
d_1(W,Y):=\sup_{\phi:\R\to\R} \Big| \,  \E\big[ \phi(W) \big] - \E\big[ \phi(Y) \big]  \,\Big| \to 0, \ \ \ n\to\infty, 
\end{equation}
 where  the supremum is taken over all $1$-Lipschitz functions
 $\phi:\R\to\R$. 

For $\al \subset [n], \,
  |\al|=m+2$, let $\mL_\al :=\{\beta \subset [n], \,
  |\beta|=m+2: \, |\beta \cap \al|\ge q+1  \}$ be the dependency
  neighborhood of $\al$. By Theorem 1 in  Barbour et
  al.~\cite{barbour:karonski:rucinski:1989} (see also  the bound (2.7) therein), the distance \eqref{e:Wasserstein.metric} is bounded by a constant multiple of 
\begin{align}
\begin{split}  \label{e:Wasserstein.bound}
&\sum_{\al \subset [n], \,
  |\al|=m+2} \sum_{\beta\in \mL_\al} \sum_{\gamma\in \mL_\al} \Big\{ \E\big[ |Z_\al Z_\beta Z_\gamma| \big]  + \E\big[ |Z_\al Z_\beta| \big] \, \E\big[|Z_\gamma|  \big]\Big\}  \\
&\le \frac{C^*}{\big(\text{Var}(S_{m+2})\big)^{3/2}}\sum_{\al \subset [n], \,
  |\al|=m+2} \sum_{\beta\in \mL_\al}   \sum_{\gamma\in \mL_\al} \Big\{  \E \big[ (I_\al^{(m+2)} + \E[I_\al^{(m+2)}])  (I_\beta^{(m+2)} + \E[I_\beta^{(m+2)}]) \\
&\qquad \qquad   \times (I_\gamma^{(m+2)} + \E[I_\gamma^{(m+2)}]) \big] +2 \E \big[ (I_\al^{(m+2)} + \E[I_\al^{(m+2)}]) (I_\beta^{(m+2)} + \E[I_\beta^{(m+2)}]) \big]\, \E[I_\gamma^{(m+2)}]  \Big\}.   
\end{split}
\end{align}
Given  $ \al \subset [n], \, |\al|=m+2$, $\beta\in \mL_\al$ and $\gamma\in \mL_\al$, denote 
$$
\ell_{12} =|\al\cap \beta|, \ \ \ell_{13} =|\al\cap\gamma|, \ \ \ell_{23}=|\beta \cap \gamma|, \ \ \ell_{123}=|\al\cap \beta \cap \gamma|. 
$$
Since $\beta\in \mL_\al$ and $\gamma\in \mL_\al$, it must be that $\ell_{12}\ge q+1$, $\ell_{13}\ge q+1$, whereas $\ell_{23}$, $\ell_{123}$ can be less than $q+1$. 
\medskip

\noindent \textit{\underline{Part I}}: We first deal with the case
when $(\al,\beta,\gamma) $  satisfies $\ell_{12} \le m+1$, $\ell_{13}
\le m+1$,  $\ell_{23} \le m+1$.   Then 
\begin{equation}  \label{e:3rd.moment.I}
\E[I_\al^{(m+2)}I_\beta^{(m+2)}I_\gamma^{(m+2)}] =  \prod_{i=q}^m
p_i^{3\binom{m+2}{i+1} -
  \binom{\ell_{12}}{i+1}-\binom{\ell_{13}}{i+1}-\binom{\ell_{23}}{i+1}+\binom{\ell_{123}}{i+1}},  
\end{equation}
and all other expectation terms in \eqref{e:Wasserstein.bound} are
bounded by the right hand side of  \eqref{e:3rd.moment.I}.  
Appealing to  these bounds and  using the fact that the number of ways
to choose $(\al,\beta,\gamma) $ corresponding to given $\ell_{12},
\ell_{13}, \ell_{23}, \ell_{123}$ is bounded by $
n^{3(m+2)-\ell_{12}-\ell_{13}-\ell_{23}+\ell_{123}}$, 
one can  bound the corresponding terms in \eqref{e:Wasserstein.bound}, up to a constant factor, by 
$$
\frac{1}{\big(\text{Var}(S_{m+2})\big)^{3/2}}\sum_{\ell_{12}=q+1}^{m+1} \sum_{\ell_{13}=q+1}^{m+1} \sum_{\ell_{23}=0}^{m+1} \sum_{\ell_{123}=0}^{\ell_{12}\wedge \ell_{13}\wedge \ell_{23}} n^{3(\taual) - \tau_{\ell_{12}-1}- \tau_{\ell_{13}-1}- \tau_{\ell_{23}-1} +  \tau_{\ell_{123}-1}}. 
$$
If, say, $\ell_{23}\le k+1$; then,
$\tau_{\ell_{23}-1}-\tau_{\ell_{123}-1}\ge 0$ (see
\eqref{e:tau.maximized.at.crit}). Therefore, 
\begin{align*}
&\sum_{\ell_{12}=q+1}^{m+1} \sum_{\ell_{13}=q+1}^{m+1} \sum_{\ell_{23}=0}^{k+1} \sum_{\ell_{123}=0}^{\ell_{12}\wedge \ell_{13}\wedge \ell_{23}} n^{3(\taual) - \tau_{\ell_{12}-1}- \tau_{\ell_{13}-1}- \tau_{\ell_{23}-1} +  \tau_{\ell_{123}-1}} \\
&\le C^*\sum_{\ell_{12}=q+1}^{m+1} \sum_{\ell_{13}=q+1}^{m+1} n^{3(\taual)-\tau_{\ell_{12}-1}-\tau_{\ell_{13}-1}} \\
&\le C^* \big\{ n^{3(\taual)-2\tau_q}\vee n^{3(\taual)-2\tau_m} \big\},
\end{align*}
and similar bounds apply when $\ell_{12}\le k+1$ or $\ell_{13}\le
k+1$. Therefore,  it is  sufficient to bound 
\begin{align}
\begin{split}  \label{e:part1.bound}
&\frac{1}{\big(\text{Var}(S_{m+2})\big)^{3/2}} \Big\{ n^{3(\taual)-2\tau_q}\vee n^{3(\taual)-2\tau_m} \\
&\qquad + \sum_{\ell_{12}=k+2}^{m+1} \sum_{\ell_{13}=k+2}^{m+1} \sum_{\ell_{23}=k+2}^{m+1} \sum_{\ell_{123}=0}^{\ell_{12}\wedge \ell_{13}\wedge \ell_{23}} n^{3(\taual) - \tau_{\ell_{12}-1}- \tau_{\ell_{13}-1}- \tau_{\ell_{23}-1} +  \tau_{\ell_{123}-1}}  \Big\}. 
\end{split}
\end{align} 
By \eqref{e:taum.and.taum+1} and Lemma \ref{l:variance.asym1} in the
Appendix, the contribution of the first term in \eqref{e:part1.bound}  
is bounded by $C^* \big(n^{-\tau_q/2} \vee n^{-\frac{1}{2}(\taual)}\big)$, which clearly converges to $0$ as $n\to\infty$. Moreover, the second line of  \eqref{e:part1.bound} is bounded by 
$$
C^*\sum_{\ell_{12}=k+2}^{m+1} \sum_{\ell_{13}=k+2}^{m+1} \sum_{\ell_{23}=k+2}^{m+1} \sum_{\ell_{123}=0}^{\ell_{12}\wedge \ell_{13}\wedge \ell_{23}} n^{\frac{3}{2}(\tau_q\wedge (\taual)) - \tau_{\ell_{12}-1}- \tau_{\ell_{13}-1}- \tau_{\ell_{23}-1} +  \tau_{\ell_{123}-1}}, 
$$
which also tends to $0$ as $n\to\infty$ by the assumption \eqref{e:CLT.cond}. 
\medskip

\noindent \textit{\underline{Part II}}:  Next we deal with the case
when $(\al,\beta,\gamma) $  satisfies 
 one of the following conditions. 
\vspace{5pt}

\begin{itemize}
\item $\ell_{12}=m+2$, $\ell_{13}\vee \ell_{23}\le m+1$, 
\item $\ell_{13}=m+2$, $\ell_{12}\vee \ell_{23}\le m+1$, 
\item $\ell_{23}=m+2$, $\ell_{12}\vee \ell_{13}\le m+1$. 
\end{itemize}
\vspace{5pt}

\noindent  We treat in detail only the first case, as the other two
cases are similar.  The conditions imply that $\al=\beta$, and  it
follows that  $\ell_{13}=\ell_{23}=\ell_{123}$. Then the contribution
of the corresponding terms in \eqref{e:Wasserstein.bound} is, up to
a constant multiplier, bounded by  
\begin{align*}
&\frac{1}{\big( \text{Var}(S_{m+2})
                 \big)^{3/2}}\sum_{\ell_{13}=q+1}^{m+1} \sum_{ \al \subset [n], \,
  |\al|=m+2} 
\sum_{\gamma\in \mL_\al:\, |\al\cap\gamma|=\ell_{13}} \E [I_\al^{(m+2)}I_\gamma^{(m+2)}] \\
&=\frac{1}{\big( \text{Var}(S_{m+2})     \big)^{3/2}}
 \sum_{\ell_{13}=q+1}^{m+1} \sum_{ \al \subset [n], \,
  |\al|=m+2} 
\sum_{\gamma\in \mL_\al:\, |\al\cap\gamma|=\ell_{13}} 
\prod_{i=q}^m p_i^{2\binom{m+2}{i+1}-\binom{\ell_{13}}{i+1}}   \\
&\le \frac{1}{\big( \text{Var}(S_{m+2}) \big)^{3/2}}\sum_{\ell_{13}=q+1}^{m+1} n^{2(\taual)-\tau_{\ell_{13}-1}}  \\
&\le \frac{C^*}{\big( \text{Var}(S_{m+2}) \big)^{3/2}} \big\{ n^{2(\taual)-\tau_q} \vee n^{2(\taual)-\tau_m} \big\}  \\
&\le C^* \big\{ n^{-\tau_q/2} \vee n^{-\frac{1}{2}(\taual)} \big\} \to 0, \ \ \text{as } n\to\infty. 
\end{align*}

\noindent \textit{\underline{Part III}}:  
It remains to consider the case when $(\al,\beta,\gamma)$ satisfies $\ell_{12}=\ell_{13}=\ell_{23}=m+2$. 
Then $\alpha=\beta=\gamma$ and $\ell_{123}=m+2$, and the contribution
of the corresponding terms in \eqref{e:Wasserstein.bound} is bounded,
up to a constant multiplier, by 
\begin{align*}
\frac{1}{\big(\text{Var}(S_{m+2})\big)^{3/2}} \sum_{\al \subset [n], \,
  |\al|=m+2}\E[I_\al^{(m+2)}]  &\le \frac{C^* n^\taual}{\big\{ n^{2(\taual)-\tau_q}\vee n^\taual \big\}^{3/2}} \\
&\le C^*n^{-\frac{1}{2}(\taual)} \to 0, \ \ \text{as } n\to\infty. 
\end{align*}
\end{proof}

In the next proposition we  show that the   CLT for $S_{m+2}$  can be
translated into a CLT for $T_{m+2}$, a special case of \eqref{e:def.T.jr}. 

\begin{proposition}  \label{p:CLT.T.m+2}
Assume conditions \eqref{e:pos.assump} and \eqref{e:CLT.cond}.  As $n\to\infty$, 
$$
\frac{T_{m+2}-\E[T_{m+2}]}{\sqrt{\text{Var}(S_{m+2})}} \Rightarrow\mathcal N(0,1). 
$$
\end{proposition}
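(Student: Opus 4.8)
The plan is to derive the statement from Proposition~\ref{p:CLT.S.m+2} by showing that $S_{m+2}$ and $T_{m+2}$ are close enough that interchanging them does not affect the Gaussian limit under the normalization $\sqrt{\text{Var}(S_{m+2})}$. For every $\sigma\subset[n]$ with $|\sigma|=m+2$, the event indicated by $\eta_\sigma^{(m+2,1,m)}$ (a maximal non-trivial $m$-cycle) implies the one indicated by $\xi_\sigma^{(m+2,m)}$ (a trivial or non-trivial $m$-cycle), so the difference
$$
D_n := S_{m+2}-T_{m+2}=\sum_{\sigma\subset[n],\,|\sigma|=m+2}\big(\xi_\sigma^{(m+2,m)}-\eta_\sigma^{(m+2,1,m)}\big)
$$
is a nonnegative, integer-valued random variable, and
$$
\frac{T_{m+2}-\E[T_{m+2}]}{\sqrt{\text{Var}(S_{m+2})}}=\frac{S_{m+2}-\E[S_{m+2}]}{\sqrt{\text{Var}(S_{m+2})}}-\frac{D_n-\E[D_n]}{\sqrt{\text{Var}(S_{m+2})}}.
$$
By Lemma~\ref{l:variance.asym1}, $\text{Var}(S_{m+2})\to\infty$, so by Chebyshev's inequality, Proposition~\ref{p:CLT.S.m+2} and Slutsky's theorem it suffices to prove that $\text{Var}(D_n)=o\big(\text{Var}(S_{m+2})\big)$ as $n\to\infty$.

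To this end I would split $D_n=D_n^{(1)}+D_n^{(2)}$, where $D_n^{(1)}=\sum_\sigma\one\{X(\sigma,\bp)\text{ forms a trivial }m\text{-cycle}\}$ counts the sets $\sigma$ on which $X(\sigma,\bp)$ is a single $(m+1)$-simplex, and $D_n^{(2)}$ counts the $\sigma$ on which $X(\sigma,\bp)$ is a non-trivial $m$-cycle that is \emph{not} a maximal $m$-strongly connected subcomplex of $X([n],\bp)$. For $D_n^{(1)}$, each summand additionally requires the $(m+1)$-face of $\sigma$, which contributes a factor $p_{m+1}=n^{-\alpha_{m+1}}$ to every single- and joint-expectation entering the variance computation for $S_{m+2}$; organizing the covariance sum for $D_n^{(1)}$ by the overlap size $|\sigma\cap\sigma'|$ exactly as in Lemma~\ref{l:variance.asym1} then yields, up to a constant, $n^{-\alpha_{m+1}}$ times the corresponding bound for $\text{Var}(S_{m+2})$ (equivalently, every exponent $\taual$ is replaced by $\tau_{m+1}=\taual-\alpha_{m+1}$). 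Since $\alpha_{m+1}>0$, this gives $\text{Var}(D_n^{(1)})=o\big(\text{Var}(S_{m+2})\big)$.

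For $D_n^{(2)}$ I would use the following description of non-maximality: given that $X(\sigma,\bp)$ is the boundary of the $(m+1)$-simplex on $\sigma$, it fails to be a maximal $m$-strongly connected subcomplex of $X([n],\bp)$ if and only if there exist $s\in[n]\setminus\sigma$ and an $(m-1)$-face $f\subset\sigma$ with $f\cup\{s\}$ an $m$-face of $X([n],\bp)$; conditionally on $X(\sigma,\bp)$ being that boundary, the latter event has probability $\prod_{i=q}^m p_i^{\binom{m}{i}}=n^{-\psi_m}$ for each fixed pair $(s,f)$. Hence each summand of $D_n^{(2)}$ is dominated by $\sum_{s\in[n]\setminus\sigma}\sum_{f\subset\sigma,\,|f|=m}\one\{X(\sigma,\bp)\text{ forms a non-trivial }m\text{-cycle}\}\,\one\{f\cup\{s\}\text{ is an }m\text{-face}\}$, so $D_n^{(2)}$ is majorized by a sum of indicators indexed by configurations occupying $m+3$ vertices. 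Bounding $\text{Var}(D_n^{(2)})$ by the sum of the covariances of these extended indicators, and organizing that sum by the overlap pattern of the $(m+3)$-vertex configurations just as in the computation behind Lemma~\ref{l:variance.asym1}, every resulting term picks up an extra factor $n^{1-\psi_m}$ (and $n^{2(1-\psi_m)}$ in the leading off-diagonal term, where both extending vertices genuinely participate). Since $\psi_m>1$ (because $m\ge k+1$), these factors tend to $0$, whence $\text{Var}(D_n^{(2)})=o\big(\text{Var}(S_{m+2})\big)$, and combining the two pieces finishes the proof.

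I expect the only delicate point to be the bookkeeping in the last step: one must check that, after attaching one extra vertex to each of the two $(m+2)$-vertex cycles, \emph{every} overlap pattern of the resulting $(m+3)$-vertex configurations yields an exponent lying strictly below the corresponding exponent in $\text{Var}(S_{m+2})$. This is exactly where the strict inequalities $\alpha_{m+1}>0$ and $\psi_m>1$ are used, and the calculation is otherwise a (somewhat longer) replica of the variance estimates already carried out for $S_{m+2}$ in Lemma~\ref{l:variance.asym1} and Proposition~\ref{p:CLT.S.m+2}; the argument parallels the passage from the auxiliary count to $T_{m+2}$ in \cite{owada:samorodnitsky:thoppe:2021}, so no genuinely new phenomenon arises.
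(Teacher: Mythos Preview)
Your overall strategy matches the paper's: writing $V_{m+2}$ for the count of non-trivial $m$-cycles, your split is exactly $D_n^{(1)}=S_{m+2}-V_{m+2}$ and $D_n^{(2)}=V_{m+2}-T_{m+2}$, and you aim to show each has variance $o\big(\text{Var}(S_{m+2})\big)$. Your handling of $D_n^{(1)}$ is correct and essentially identical to the paper's. The gap is in $D_n^{(2)}$. The majorization $D_n^{(2)}\le\hat D_n$, with $\hat D_n$ your sum over $(m+3)$-vertex configurations, does \emph{not} yield $\text{Var}(D_n^{(2)})\le\text{Var}(\hat D_n)$: variance is not monotone under pointwise domination. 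Nor can you fall back on $\text{Var}(D_n^{(2)})\le\E[\hat D_n^{\,2}]$, since $\E[\hat D_n^{\,2}]\ge(\E[\hat D_n])^2\asymp n^{2(\taual)+2(1-\psi_m)}$, and $2(1-\psi_m)<-\tau_q$ need not hold (recall $\tau_q>1$ always, while $\psi_m-1$ may be arbitrarily small).

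The paper instead bounds $\text{Var}(V_{m+2}-T_{m+2})$ directly (this is \eqref{e:Var.V.m+2-T.m+2} in Lemma~\ref{l:variance.asym1}), expanding into covariances of $\lambda_\sigma-\eta_\sigma$ with $\lambda_\tau-\eta_\tau$ and splitting by $\ell=|\sigma\cap\tau|$. The delicate cases are $\ell\in\{q-1,q\}$: here the non-maximality event for $\sigma$ involves vertices of $\tau$, so the summands are genuinely dependent, yet there are $\asymp n^{2(m+2)-\ell}$ such pairs, too many to absorb without cancellation. The paper obtains that cancellation by writing each covariance as $A_n-2B_n+C_n$ (the $(\lambda,\lambda)$, $(\lambda,\eta)$ and $(\eta,\eta)$ pieces), showing $A_n=0$ for $\ell\le q$ and extracting the factor $n^{1-\psi_m}$ from $B_n$ via the conditional non-maximality probability, while $C_n$ is controlled by the estimates \eqref{e:mT.ell} imported from \cite{owada:samorodnitsky:thoppe:2021}. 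Your majorization discards exactly this cancellation at small overlaps, which is why the argument as written does not close; the ``bookkeeping'' you flag is not the issue---the passage from $D_n^{(2)}$ to $\hat D_n$ is.
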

\begin{proof}
Define 
\begin{equation}  \label{e:def.V.m+2}
V_{m+2} :=\sum_{\sigma \subset [n], \, |\sigma|=m+2} \lambda_\sigma^{(m+2,m)}, 
\end{equation}
where $\lambda_\sigma^{(m+2,m)}$ is the indicator function of the
event that $X(\sigma,\bp)$ forms a non-trivial $m$-cycle. 
By virtue of the last proposition, it is enough to verify that, as $n\to\infty$, 
$$
\frac{S_{m+2}-T_{m+2} - \E[S_{m+2}-T_{m+2}]}{\sqrt{\text{Var}(S_{m+2})}} \stackrel{p}{\to}0. 
$$
For every $\vep>0$, Chebyshev's inequality gives that 
\begin{align}
\begin{split}  \label{e:after.Chebyshev}
&\P \big( \, |S_{m+2}-T_{m+2} - \E[S_{m+2}-T_{m+2}]| \ge \vep \sqrt{\text{Var}(S_{m+2})}\big)   \\
&\le \P \Big( \, |S_{m+2}-V_{m+2} - \E[S_{m+2}-V_{m+2}]| \ge \frac{\vep}{2} \sqrt{\text{Var}(S_{m+2})}\Big) \\
&\qquad + \P \Big( \, |V_{m+2}-T_{m+2} - \E[V_{m+2}-T_{m+2}]| \ge \frac{\vep}{2} \sqrt{\text{Var}(S_{m+2})}\Big)\\
&\le \frac{4}{\vep^2}\, \Big\{ \frac{\text{Var}(S_{m+2}-V_{m+2})}{\text{Var}(S_{m+2})}+\frac{\text{Var}(V_{m+2}-T_{m+2})}{\text{Var}(S_{m+2})} \Big\}. 
\end{split}
\end{align}
By definition, $S_{m+2}-V_{m+2}$ equals the number of  $(m+1)$-faces  in $X([n],\bp)$, so by Proposition 3.1 in \cite{owada:samorodnitsky:thoppe:2021}, 
\begin{equation}  \label{e:Var.S.m+2-V.m+2}
\text{Var}(S_{m+2}-V_{m+2})\le C^* (n^{2\tau_{m+1}-\tau_q}\vee n^{\tau_{m+1}}). 
\end{equation}
It now follows from \eqref{e:Var.S.m+2} and \eqref{e:Var.V.m+2-T.m+2}
of  Lemma \ref{l:variance.asym1}, as well as
\eqref{e:Var.S.m+2-V.m+2}, that the final expression in
\eqref{e:after.Chebyshev} is at most  
\begin{align*}
&\frac{C^*}{n^{2(\taual)-\tau_q}\vee n^{\taual}}\, \Big\{  n^{2\tau_{m+1}-\tau_q}\vee n^{\tau_{m+1}} \\
&\qquad \qquad\qquad \qquad\qquad\qquad + \big( n^{2(\taual)-\tau_q} \vee n^\taual \big) n^{(1\vee \psi_{m-1})-\psi_m}\Big\}  \\
&\le C^* (n^{-\al_{m+1}} + n^{(1\vee \psi_{m-1})-\psi_m}) \to 0, \ \ \text{as } n\to\infty. 
\end{align*}
\end{proof}

Finally, we show that the remaining terms in the expression 
\eqref{e:repre.Betti} of the Betti numbers are negligible in the
context of the required CLT. 

\begin{proposition}  \label{p:CLT.residual}
Assume \eqref{e:pos.assump} and \eqref{e:CLT.cond}. 
As $n\to\infty$, 
$$
\frac{\sum_{j=m+3}^n \sum_{r\ge1} rT_{j,r} - \E \big[ \sum_{j=m+3}^n \sum_{r\ge1} rT_{j,r} \big]}{\sqrt{\text{Var}(S_{m+2})}} \stackrel{p}{\to}0. 
$$
\end{proposition}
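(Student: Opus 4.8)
The plan is to combine Chebyshev's inequality with the splitting device already used in the proofs of Theorems \ref{t:SLLN} and \ref{t:lower.LD}. Recall from Lemma \ref{l:variance.asym1} that $\text{Var}(S_{m+2})\asymp n^{2(\taual)-\tau_q}\vee n^{\taual}$, which in particular tends to infinity since $\taual>0$ by \eqref{e:pos.assump}. Fix $D$ as in \eqref{e:def.D} and write the residual as $\sum_{j=m+3}^{D+m}\sum_{r\ge1}rT_{j,r}+\sum_{j=D+m+1}^{n}\sum_{r\ge1}rT_{j,r}$. The far tail is a nonnegative random variable, and repeating the bound \eqref{e:upper.bdd.E.R.D+m+1.1}--\eqref{e:upper.bdd.E.R.D+m+1.2} (as in Proposition \ref{p:SLLN.residual}, which gives $\sum_{j\ge D+m+1}\sum_r rT_{j,r}\le n^{m+1}R_{D+m+1}$) yields $\E\big[\sum_{j=D+m+1}^{n}\sum_r rT_{j,r}\big]\le C^*n^{m+1+\tau_m-D(\psi_m-1)}\le C^*n^{-1}$ by the choice of $D$. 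Since $\sqrt{\text{Var}(S_{m+2})}\to\infty$, Markov's inequality shows that this piece divided by $\sqrt{\text{Var}(S_{m+2})}$ converges to $0$ in probability, and hence so does its centered version.

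It thus remains to treat the finite sum $\sum_{j=m+3}^{D+m}\sum_r rT_{j,r}$, for which, by Chebyshev, it is enough to prove $\text{Var}\big(\sum_{r\ge1}rT_{j,r}\big)=o\big(\text{Var}(S_{m+2})\big)$ for each fixed $j$ with $m+3\le j\le D+m$. Set $u_\sigma:=\sum_{r\ge1}r\eta_\sigma^{(j,r,m)}$, so that $0\le u_\sigma\le\binom{j}{m+1}\xi_\sigma^{(j,m)}$. The diagonal part $\sum_\sigma\text{Var}(u_\sigma)\le\binom{j}{m+1}^2\E[S_j]$ is, by Lemma \ref{l:estimate.st.conn} (cf.\ the computation of $\E[S_j]$ in the proof of Proposition \ref{p:lower.bdd.with.leading.term}), at most $C_jn^{\taual}(n^{1-\psi_m})^{j-m-2}$; since $\psi_m>1$ and $j-m-2\ge1$ this gains at least one factor $n^{1-\psi_m}\to0$ over $n^{\taual}$, hence over $\text{Var}(S_{m+2})$. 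For the off-diagonal part $\sum_{\sigma\neq\sigma'}\text{Cov}(u_\sigma,u_{\sigma'})$ one proceeds as in the covariance bookkeeping of Proposition \ref{p:CLT.S.m+2} and in Lemmas \ref{l:variance.asym1}--\ref{l:variance.asym2}, organizing the pairs $(\sigma,\sigma')$ by the overlap $\ell=|\sigma\cap\sigma'|$: overlaps with $\ell\le k+1$ are harmless because $\tau_{\ell-1}$ is dominated by the critical values (see \eqref{e:tau.maximized.at.crit}); overlaps in the range $k+2\le\ell\le m+1$ are exactly the configurations controlled by the assumption \eqref{e:CLT.cond}; and for large overlaps $\ell\ge m+2$ one invokes the maximality built into $\eta_\sigma^{(j,r,m)}$ — two restrictions $X(\sigma,\bp)$ and $X(\sigma',\bp)$ that are both maximal $m$-strongly connected and share that many vertices cannot simultaneously carry the minimal cyclic structure, which forces the number of shared faces to be small — together with the extra $n^{1-\psi_m}$ factors contributed by the $j-m-2\ge1$ auxiliary vertices. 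The upshot is a bound of the form $\text{Var}\big(\sum_r rT_{j,r}\big)\le C_jn^{1-\psi_m}\big(n^{2(\taual)-\tau_q}\vee n^{\taual}\big)$, which is $o(\text{Var}(S_{m+2}))$.

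The main obstacle is precisely this off-diagonal covariance estimate. Unlike $S_{m+2}$ or $S_j$, the quantity $\sum_r rT_{j,r}$ carries the maximality constraint, so $u_\sigma$ depends on the state of the complex outside $\sigma$, and one cannot simply dominate its variance by that of $S_j$; indeed $\E[S_j]^2$ can already be of larger order than $\text{Var}(S_{m+2})$ (this happens when $\tau_q\wedge(\taual)>2(\psi_m-1)$), so the crude domination $u_\sigma\le\binom{j}{m+1}\xi_\sigma^{(j,m)}$ is useless at the level of second moments — one really does need the cancellation present in the \emph{centered} statement. The resolution is to exploit maximality directly to eliminate or heavily penalize the high-overlap pairs, while the medium-overlap pairs are dispatched precisely by \eqref{e:CLT.cond}; this is the same mechanism that drives the proof of Proposition \ref{p:CLT.S.m+2}, here reinforced by the fact that any $j$-point configuration with $j\ge m+3$ comes with at least one extra vertex and thus an extra vanishing factor $n^{1-\psi_m}$.
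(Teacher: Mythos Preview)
Your overall architecture --- split at $j=D+m$, handle the far tail via the first-moment bound $\sum_{j\ge D+m+1}\sum_r rT_{j,r}\le n^{m+1}R_{D+m+1}$, and handle each of the finitely many near terms by Chebyshev --- is exactly what the paper does. The difference is in how the variance of the near terms is controlled. The paper does not redo any covariance bookkeeping here at all: it simply invokes the ready-made bound \eqref{e:Var.T.jr} from Lemma~\ref{l:variance.asym1}, namely $\text{Var}(T_{j,r})\le C_j\bigl(n^{2(\taual)-\tau_q}\vee n^{\taual}\bigr)n^{1-\psi_m}$, applies it to each $(j,r)$ with $m+3\le j\le D+m$ and $1\le r\le\binom{j}{m+1}$ (a finite collection), and is done in two lines. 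The covariance analysis you sketch is the content of the \emph{proof} of \eqref{e:Var.T.jr}, which in turn quotes the case-by-case estimates \eqref{e:mT.ell} from \cite{owada:samorodnitsky:thoppe:2021}.

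One point in your sketch is off and worth flagging: you say the overlaps $k+2\le\ell\le m+1$ are ``exactly the configurations controlled by the assumption \eqref{e:CLT.cond}''. They are not. Condition \eqref{e:CLT.cond} is a \emph{three-set} constraint that enters only through the Stein/Wasserstein bound in Proposition~\ref{p:CLT.S.m+2}; the pairwise covariance terms with $\ell\in\{q+1,\dots,m+1\}$ are handled without it, because by \eqref{e:tau.maximized.at.crit} the map $\ell\mapsto\tau_{\ell-1}$ on that range is unimodal with minimum at an endpoint, so $n^{2(\taual)-\tau_{\ell-1}}\le n^{2(\taual)-\tau_q}\vee n^{2(\taual)-\tau_m}$, and the second alternative is already $\le n^{\taual}$ by \eqref{e:taum.and.taum+1}. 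So the variance bound you are aiming for holds unconditionally in \eqref{e:CLT.cond}; that hypothesis is carried in the statement of the proposition only because the proposition is a stepping stone toward Theorem~\ref{t:CLT}, not because it is used here.
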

\begin{proof}
We first check that for every $j\ge m+3$ and $r\ge1$, 
$$
\frac{T_{j,r}-\E[T_{j,r}]}{\sqrt{\text{Var}(S_{m+2})}} \stackrel{p}{\to}0, \ \ \ n\to\infty. 
$$
Indeed, for  every $\vep>0$, Chebyshev's inequality, along with \eqref{e:Var.S.m+2} and \eqref{e:Var.T.jr} of Lemma \ref{l:variance.asym1},  demonstrate  that as $n\to\infty$,
\begin{align*}
\P \big(\, |T_{j,r}-\E[T_{j,r}]| \ge \vep\sqrt{\text{Var}(S_{m+2})}   \big)\le \frac{1}{\vep^2}\cdot \frac{\text{Var}(T_{j,r})}{\text{Var}(S_{m+2})} \le C_j n^{1-\psi_m} \to 0. 
\end{align*}

It remains to show that 
$$
\frac{\sum_{j=D+m+1}^n \sum_{r\ge1} rT_{j,r} - \E \big[
  \sum_{j=D+m+1}^n \sum_{r\ge1} rT_{j,r}
  \big]}{\sqrt{\text{Var}(S_{m+2})}} \stackrel{p}{\to}0, 
$$
where $D$ is a  positive constant satisfying
\eqref{e:def.D}. However, the denominator diverges to infinity, while
proceeding the  same way as in
\eqref{e:upper.bdd.E.R.D+m+1.1} and \eqref{e:upper.bdd.E.R.D+m+1.2}, 
one sees that the mean of  the absolute value of the numerator is
bounded  
by a constant multiple of $n^{m+1+\tau_m-D(\psi_m-1)}$, which goes to
$0$ as $n\to\infty$. This proves the claim. 
\end{proof}

\begin{proof}[Proof of Theorem \ref{t:CLT}]
By Propositions \ref{p:CLT.T.m+2}  and
\ref{p:CLT.residual}, 
along with the representation \eqref{e:repre.Betti}, 
$$
\frac{\beta_{m,n}-\E[\beta_{m,n}]}{\sqrt{\text{Var}(S_{m+2})}} \Rightarrow \mathcal N(0,1). 
$$
Now, \eqref{e:Var.S.m+2} in Lemma \ref{l:variance.asym1} completes the proof. 
\end{proof}
\medskip

\section{Appendix} 

We start with the asymptotic behaviour of $\Lambda(X_m)$ and $\delta(X_m)$
in \eqref{e:4.quantities}.  
 
\begin{lemma}\label{l:Lambda.Delta}
$(i)$ As $n\to\infty$, 
\begin{equation} \label{e:Lambda.Xm}
\Lambda(X_m) \sim \begin{cases}
\frac{n^{2(\taual)-\tau_q}}{(q+1)! ( (m-q+1)! )^2} & \text{ if } \taual>\tau_q, \\[7pt]
\frac{n^\taual}{(m+2)!} & \text{ if } \taual<\tau_q, \\[7pt]
\Big\{ \frac{1}{(q+1)! ( (m-q+1)! )^2}  + \frac{1}{(m+2)!} \Big\} n^\taual & \text{ if } \taual=\tau_q.
\end{cases}
\end{equation}
$(ii)$ As $n\to\infty$, 
\begin{equation}  \label{e:delta.Xm}
\delta(X_m) \sim \begin{cases}
\frac{(m+2)!n^{\taual-\tau_q}}{(q+1)! ( (m-q+1)! )^2} & \text{ if } \taual\ge \tau_q, \\[3pt]
0& \text{ if } \taual<\tau_q. 
\end{cases}
\end{equation}
\end{lemma}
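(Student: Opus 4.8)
The plan is to reduce the whole statement to the asymptotics of the double sum appearing in the definition \eqref{e:4.quantities} of $\Lambda(X_m)$, namely $\Sigma:=\sum_{\al\sim\beta}\E[I_\al^{(m+2)}I_\beta^{(m+2)}]$, since $\Lambda(X_m)=\mu(X_m)+\Sigma$, the asymptotics of $\mu(X_m)$ are given in \eqref{e:EXm}, and $\delta(X_m)=\Lambda(X_m)/\mu(X_m)-1$ then follows by division. First I would observe that, for $\al,\beta\subset[n]$ with $|\al|=|\beta|=m+2$, the relation $\al\sim\beta$ of \eqref{e:equiv.rela2} holds precisely when $\ell:=|\al\cap\beta|$ satisfies $q+1\le\ell\le m+1$, because $Q_{m+2}(\al)\cap Q_{m+2}(\beta)$ is exactly the family of subsets of $\al\cap\beta$ of sizes $q+1,\dots,m+1$. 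Splitting the ordered pairs by the value of $\ell$ and counting, for each dimension $i\in\{q,\dots,m\}$, the faces in $Q_{m+2}(\al)\cup Q_{m+2}(\beta)$ by inclusion--exclusion, one gets $\E[I_\al^{(m+2)}I_\beta^{(m+2)}]=\prod_{i=q}^m p_i^{2\binom{m+2}{i+1}-\binom{\ell}{i+1}}$, while the number of ordered pairs with $|\al\cap\beta|=\ell$ is $\binom{n}{m+2}\binom{m+2}{\ell}\binom{n-m-2}{m+2-\ell}\sim n^{2(m+2)-\ell}\big/\big(\ell!((m+2-\ell)!)^2\big)$.

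Next I would pass to $\tau$-notation using the two identities $\sum_{i=q}^m\binom{m+2}{i+1}\al_i=m+2-(\taual)$ and $\sum_{i=q}^m\binom{\ell}{i+1}\al_i=\ell-\tau_{\ell-1}$, both immediate from the definitions of $\psi_j,\tau_j$ and the fact that $\al_i=0$ for $i<q$. Then $\prod_{i=q}^m p_i^{2\binom{m+2}{i+1}-\binom{\ell}{i+1}}=n^{-2(m+2)+2(\taual)+\ell-\tau_{\ell-1}}$, and since the sum over $\ell$ has boundedly many terms,
$$
\Sigma\;\sim\;\sum_{\ell=q+1}^{m+1}\frac{n^{2(\taual)-\tau_{\ell-1}}}{\ell!\,((m+2-\ell)!)^2},
$$
so everything comes down to locating the smallest of the numbers $\tau_q,\tau_{q+1},\dots,\tau_m$ (equivalently the largest exponent in this sum) and comparing it against $\taual$, the exponent of $\mu(X_m)$.

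The delicate part will be to show that this minimum is $\tau_q$, and strictly so among the terms of $\Sigma$ whenever $\taual\ge\tau_q$. By \eqref{e:tau.maximized.at.crit} the sequence $(\tau_j)$ increases on $\{q,\dots,k\}$ and decreases afterward, so $\min_{q\le j\le m}\tau_j=\tau_q\wedge\tau_m$, and it only remains to place $\tau_m$. For this I would use the identity $\tau_m-(\taual)=\sum_{i=q}^m\binom{m+1}{i}\al_i-1\ge\psi_m-1>0$, where the middle inequality is $\binom{m+1}{i}\ge\binom{m}{i}$ and the last holds because $m\ge k+1$ forces $\psi_m>1$. Thus $\tau_m>\taual$ always. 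In particular, when $\taual\ge\tau_q$ we get $\tau_m>\tau_q$, hence $\tau_j>\tau_q$ for every $q<j\le m$ by monotonicity, so the $\ell=q+1$ term alone governs $\Sigma$, contributing $n^{2(\taual)-\tau_q}\big/\big((q+1)!((m-q+1)!)^2\big)$ (the coefficient being the $\ell=q+1$ instance of $1/(\ell!((m+2-\ell)!)^2)$).

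Finally I would assemble the cases by comparing with $\mu(X_m)\sim n^{\taual}/(m+2)!$. If $\taual>\tau_q$, then $2(\taual)-\tau_q>\taual$ and $\Sigma$ dominates $\mu(X_m)$, giving the first line of \eqref{e:Lambda.Xm}. If $\taual=\tau_q$, the two pieces share the exponent $\taual$ and their coefficients add, giving the third line. If $\taual<\tau_q$, then also $\taual<\tau_m$ by the identity above, so $\min_{q\le j\le m}\tau_j>\taual$, every term of $\Sigma$ is $o(n^{\taual})=o(\mu(X_m))$, and $\Lambda(X_m)\sim\mu(X_m)\sim n^{\taual}/(m+2)!$, the second line. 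Dividing \eqref{e:Lambda.Xm} by $n^{\taual}/(m+2)!$ and subtracting $1$ then yields \eqref{e:delta.Xm}: when $\taual>\tau_q$ the ratio tends to infinity and the $-1$ is negligible; when $\taual=\tau_q$ the ratio equals $(m+2)!/\big((q+1)!((m-q+1)!)^2\big)+1$ and the $-1$ cancels the trailing $1$ (consistent with the $n^{\taual-\tau_q}=1$ form); and when $\taual<\tau_q$ the ratio tends to $1$, so $\delta(X_m)\to0$.
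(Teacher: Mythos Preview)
Your proposal is correct and follows essentially the same approach as the paper: split $\Sigma$ by $\ell=|\al\cap\beta|\in\{q+1,\dots,m+1\}$, compute $\E[I_\al^{(m+2)}I_\beta^{(m+2)}]=\prod_{i=q}^m p_i^{2\binom{m+2}{i+1}-\binom{\ell}{i+1}}$, obtain $\Sigma\sim\sum_{\ell=q+1}^{m+1} n^{2(\taual)-\tau_{\ell-1}}/\bigl(\ell!((m+2-\ell)!)^2\bigr)$, and then use the unimodality \eqref{e:tau.maximized.at.crit} together with the key identity $\tau_m-(\taual)=\sum_{i=q}^m\binom{m+1}{i}\al_i-1>\psi_m-1>0$ (this is exactly the paper's \eqref{e:taum.and.taum+1}) to locate the dominant exponent. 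Your case analysis for assembling $\Lambda(X_m)$ and $\delta(X_m)$ matches the paper's; if anything, you are slightly more explicit about why the minimum of $\tau_q,\dots,\tau_m$ is attained at an endpoint.
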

\begin{proof}
For $\sigma \subset [n], |\sigma|=m+2$,  we use the notation
$\xi_\sigma:=\xi_\sigma^{(m+2,m)}$  in \eqref{e:def.Sj}. 
By  the definition of the relation  \eqref{e:equiv.rela2}  we have 
\begin{align*}
&\sum_{\substack{\al, \beta \subset [n], |\al|=|\beta|=m+2, \\ \al\sim \beta}} \E\big[ I_\al^{(m+2)}I_\beta^{(m+2)} \big]  \\
&=\sum_{\ell=q+1}^{m+1} \sum_{\substack{\sigma \subset [n], \\
  |\sigma|=m+2}}\sum_{\substack{\tau \subset [n], \\ |\tau|=m+2}}
  \E\big[ \xi_\sigma  \xi_\tau \big]\, \one \big\{ |\sigma \cap \tau|=\ell \big\}  \\
&=\sum_{\ell=q+1}^{m+1}    \binom{n}{m+2}\binom{m+2}{\ell}\binom{n-m-2}{m+2-\ell}
      \E\big[  \xi_{\sigma_{m+2} }  \xi_{\tau_{m+2}}   \big]\,
                             \one \big\{ |\sigma_{m+2} \cap \tau_{m+2}|=\ell \big\} \\
&\sim \sum_{\ell=q+1}^{m+1} \frac{n^{2(m+2)-\ell}}{\ell! \big(
                                                                                         (m+2-\ell)! \big)^2}\,  \E\big[  \xi_{\sigma_{m+2} }  \xi_{\tau_{m+2}}   \big]\,
                             \one \big\{ |\sigma_{m+2} \cap    \tau_{m+2}|=\ell \big\}. 
\end{align*}
For $\sigma,\tau \subset [n], \, |\sigma|=|\tau|=m+2, \, \ell=|\sigma \cap
\tau|\in \{q+1,\dots,m+1  \}$,  we have 
$$
\E\big[ \xi_\sigma \xi_\tau\big] =\prod_{i=q}^m p_i^{2\binom{m+2}{i+1}-\binom{\ell}{i+1}} = n^{-2\sum_{i=q}^m \binom{m+2}{i+1}\al_i +\sum_{i=q}^{\ell-1}\binom{\ell}{i+1}\al_i}, 
$$
where $\prod_{i=q}^m p_i^{-\binom{\ell}{i+1}}$ accounts for the double count of the $(\ell-1)$-face, which is common to $\sigma$ and $\tau$. Therefore, as $n\to\infty$, 
\begin{align}
\begin{split}  \label{e:cross.moment}
\sum_{\substack{\al, \beta \subset [n], |\al|=|\beta|=m+2, \\ \al\sim \beta}}   \E\big[ I_\al^{(m+2)}I_\beta^{(m+2)} \big]  &\sim \sum_{\ell=q+1}^{m+1} \frac{n^{2(\taual)-\tau_{\ell-1}}}{\ell! \big(( m+2-\ell)! \big)^2} \\
&\sim \frac{n^{2(\taual)-\tau_{q}}}{(q+1)! \big(( m-q+1)! \big)^2} \vee \frac{n^{2(\taual)-\tau_m}}{(m+1)!}. 
\end{split}
\end{align}
The key observation is that 
\begin{equation}  \label{e:taum.and.taum+1}
\tau_m-(\taual) = -1+\sum_{i=q}^m \binom{m+1}{i} \al_i > -1+\psi_m>0. 
\end{equation}
Combining \eqref{e:EXm}, \eqref{e:cross.moment}, and
\eqref{e:taum.and.taum+1} gives us \eqref{e:Lambda.Xm}, and 
\eqref{e:delta.Xm} follows from \eqref{e:EXm} and \eqref{e:Lambda.Xm}. 
\end{proof}

Before stating the next proposition, for $j\ge q+1$ we let $\mH_j$ be the collection of words of length $j$ in $[n]$. Let us fix an arbitrary
$i$-simplex for $i\in \{ q,\dots,m \}$, and let  $L$ be a simplicial
complex (of dimension $i$) that is induced by this
$i$-simplex.   
  Given $\beta\in \mH_{i+1}$,
define  
$$
J_\beta^{(i+1)}=\one \big\{  R_{i+1}(\beta)\subset \Gamma_\bp\big\}, 
$$
where  $R_{i+1}(\beta)$ denotes the  collection of words of length
$q+1,\dots,i+1$, that are contained in $\beta$; it is isomorphic to
$L\setminus L^{(q-1)}$, where $L^{(q-1)}$ is the $(q-1)$-skeleton of $L$.  In particular, $R_{q+1}(\beta) = \beta$ for
$\beta\in \mH_{q+1}$ (see \eqref{e:def.J.q+1}).  For a subset
$S\subset [n]$ define
$$
\mH_{i+1}(S) = \{ \beta\in \mH_{i+1}: V(\beta)\subset S \}, 
$$
where  $V(\beta)$ is the vertex set of $\beta$. We will be
particularly interested in $\mH_{i+1}(\U)$, where $\U$ is defined  in
\eqref{e:def.mU}. 
As an extension of \eqref{e:def.Yq}, define  $Y_i:= \sum_{\beta\in
  \mH_{i+1}(\U)}J_\beta^{(i+1)}$, which represents the number of
$i$-faces in $X(\U, \bp)$.  

The following proposition  is used for the proof of Proposition
\ref{p:lower.LD.leading.term2} 
  only in the special case $i=q$
(see \eqref{e:need.prop.i=q}). Nevertheless, we will prove it  under
a more general setup. The proof uses  ideas from Theorem 20 in \cite{janson:warnke:2016}. 

\begin{proposition}  \label{p:small.q.face}
Assume that \eqref{e:pos.assump} holds. 
Let $i\in\{q, \ldots, m\}$. Then for every $\vep\in (0,2^{-(m+5)})$, there exists $N\in \bbn$ such that for all $n\ge N$, 
$$
\P \big( X_m\le (1-\vep) \E[X_m] \big) \ge \frac{1}{6}\,\P \big( Y_i\le (1-2^{m+5}\vep)\E[Y_i] \big). 
$$
\end{proposition}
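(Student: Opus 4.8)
The plan is to transfer a lower-tail deviation of $Y_i$ (a sum of i.i.d.\ indicators, namely the $i$-face count in $X(\U,\bp)$) into a lower-tail deviation of $X_m = S_{m+2}$ by exploiting the recursive structure of the multiparameter complex: every $m$-cycle on $m+2$ vertices contains $\binom{m+2}{i+1}$ faces of dimension $i$, so if there are very few $i$-faces, then there cannot be many $m$-cycles either. First I would recall from \eqref{e:EXm} and \eqref{e:mu.Yq} that $\mu(X_m)\sim a_m = n^{\taual}/(m+2)!$ and $\mu(Y_i)$ is of order $n^{\tau_i}$, and note that conditionally on the $(i-1)$-skeleton of $\Gamma_\bp$ the event $\{I_\al^{(m+2)}=1\}$ forces all $\binom{m+2}{i+1}$ of the $i$-faces spanned by $\al$ to be present. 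This suggests conditioning on the lower skeleton and on the value of $Y_i$ (or more precisely on the $i$-face configuration restricted to $\U$), and then running a union/Markov bound on the number of $m$-cycles that can be supported on the available $i$-faces.

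The key steps, in order: (1) fix the generic $i$-simplex and the induced complex $L$ as in the statement, and observe that $X_m\le(1-\vep)\E[X_m]$ holds whenever the number of $i$-faces inside $\U$ is small enough, because each contributing $\al$ in \eqref{e:reformulate.Sm2} that lies inside $\U$ consumes $\binom{m+2}{i+1}\ge m+2$ distinct $i$-faces of $\U$; here the constant $2^{m+5}$ arises from $\binom{m+2}{i+1}\le 2^{m+2}$ together with the factor $2^{q+1}$-type loss from passing to $\U=[\lfloor n/2\rfloor]$. (2) Decompose $X_m = X_m^{\U} + (X_m - X_m^{\U})$, where $X_m^{\U}$ counts the $\al\subset\U$; use a first-moment/Markov estimate to show $X_m-X_m^{\U}$ (the $m$-cycles touching $[n]\setminus\U$ in a controlled way, or rather the ``cross'' terms) is, with probability at least (say) $5/6$ for large $n$, at most $\tfrac{\vep}{2}\E[X_m]$ — this is where the asymptotics $\mu(X_m)\sim a_m$ and the counting of vertices in $\U$ versus $[n]$ enter. (3) On the complementary good event, reduce $\{X_m\le(1-\vep)\E[X_m]\}$ to $\{X_m^{\U}\le (1-\tfrac{\vep}{2})\E[X_m]\}$, and then to a deviation of $Y_i$: since $X_m^{\U}\le \binom{m+2}{i+1}^{-1}\cdot(\text{\#}i\text{-faces used})\le$ a constant times $Y_i$ is the wrong direction, instead argue contrapositively — if $Y_i > (1-2^{m+5}\vep)\E[Y_i]$ fails, i.e.\ $Y_i$ is large, one cannot directly conclude; so rather one shows $\{Y_i\le(1-2^{m+5}\vep)\E[Y_i]\}$ forces $X_m^{\U}$ to be small by the pigeonhole-type bound that each $m$-cycle eats $\binom{m+2}{i+1}$ $i$-faces, giving $X_m^{\U}\le \tfrac{1}{\binom{m+2}{i+1}}Y_i$ deterministically. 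Combining, $\{Y_i\le(1-2^{m+5}\vep)\E[Y_i]\}\cap\{\text{good event}\}\subset\{X_m\le(1-\vep)\E[X_m]\}$, and the $1/6$ comes from $\P(\text{good event})\ge 5/6$ minus the FKG/independence bookkeeping, or simply from $\P(A\cap B)\ge \P(A)+\P(B)-1\ge \P(A) - 1/6$ after splitting off the small-probability bad events; one arranges the constants so the final bound reads $\ge \tfrac16\P(Y_i\le(1-2^{m+5}\vep)\E[Y_i])$, invoking FKG to lower-bound $\P(\text{decreasing event}\cap\text{decreasing event})$ by the product.

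The main obstacle I expect is step (2)–(3): cleanly separating the ``local'' contribution $X_m^{\U}$ from the rest and showing the remainder is negligible with high probability, while simultaneously keeping the deterministic pigeonhole bound $X_m^{\U}\le \binom{m+2}{i+1}^{-1}Y_i$ honest — one must be careful that an $i$-face can be shared by several potential $(m+2)$-vertex $m$-cycles, so the bound should instead count, for each realized $m$-cycle $\al\subset\U$, its $\binom{m+2}{i+1}$ $i$-faces with multiplicity and note that the total multiplicity is at most (bounded degree of an $i$-face in $m+2$-cliques)$\times Y_i$; getting the constant to fit under $2^{m+5}$ after also losing the $\U$-versus-$[n]$ factor and the $5/6$ probability slack is the delicate accounting. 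I would follow the structure of Theorem~20 in \cite{janson:warnke:2016}, adapting their conditioning argument, and absorb all the combinatorial losses into the generous constant $2^{m+5}$ and the fixed fraction $1/6$.
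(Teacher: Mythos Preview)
Your proposal has two genuine gaps that prevent it from going through.

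First, step (2) fails outright: if $X_m^{\U}$ counts the $\alpha\subset\U$ with $I_\alpha^{(m+2)}=1$, then $\E[X_m^{\U}]/\E[X_m]\sim\binom{\lfloor n/2\rfloor}{m+2}\big/\binom{n}{m+2}\to 2^{-(m+2)}$, so $\E[X_m-X_m^{\U}]\sim(1-2^{-(m+2)})\E[X_m]$. This is of the \emph{same} order as $\E[X_m]$, and no Markov-type bound will make $X_m-X_m^{\U}\le\tfrac{\vep}{2}\E[X_m]$ with high probability. Second, the deterministic pigeonhole in step (3) is simply false: an $i$-face $\beta\subset\U$ can lie in unboundedly many realized $m$-cycles (the expected number of $(m+2)$-sets $\alpha\supset\beta$ with $I_\alpha^{(m+2)}=1$ is of order $n^{\taual-\tau_i}$, which diverges precisely in the regime $\taual\ge\tau_q$ where the proposition is used). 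There is no inequality $X_m^{\U}\le C\cdot Y_i$ with $C$ independent of $n$, and the ``bounded degree'' fix you sketch does not exist.

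The paper's proof does follow \cite[Theorem 20]{janson:warnke:2016}, but with a different decomposition and a conditional second-moment argument rather than any deterministic counting. One writes $Z=\tau X_m$ with $\tau=\binom{m+2}{i+1}$, viewed as a sum over pairs $(\alpha,\beta)$ with $\beta\subset\alpha$, and defines $Z_\U$ to count those pairs with $\beta\in\mH_{i+1}(\U)$ and $V(\alpha)\setminus V(\beta)\subset[n]\setminus\U$. With this choice $\E[Z_\U]\ge 2^{-(m+3)}\E[Z]$ for large $n$, and one-sided Chebyshev (with $\text{Var}(R_\U)\le\tau^2\Lambda(X_m)$) controls $R_\U:=Z-Z_\U$. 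The crucial point for $Z_\U$ is that, conditionally on the $i$-skeleton $\Gamma_\bp^{(i+1)}(\U)$, the conditional mean $\E^*[Z_\U]$ equals $Y_i\cdot\E[W_{\tilde\beta}]$ exactly, by symmetry; thus on $\{Y_i\le(1-2^{m+5}\vep)\E[Y_i]\}$ the conditional mean is small, and a further conditional Chebyshev (with conditional variance bounded on a high-probability event $\mathcal D$) forces $Z_\U$ below its threshold. The factor $1/6$ comes from the product of the two Chebyshev bounds once $(\vep\mu(X_m))^2\ge\Lambda(X_m)$, which holds for large $n$ by Lemma~\ref{l:Lambda.Delta}. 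The idea you were missing is that the transfer from $Y_i$ to $X_m$ goes through the \emph{conditional expectation} being linear in $Y_i$, not through a pointwise inequality.
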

\begin{proof}
Set 
$$
Z:=\sum_{(\alpha, \beta)\in \mathcal H_{m+2}\times \mH_{i+1}}\hspace{-.5pt}\one \{ \beta \subset \alpha \}I_\al^{(m+2)} = \tau X_m, 
$$
where (independently of $\alpha$), $\tau:=\sum_{\beta\in
  \mH_{i+1}}\one\{ \beta\subset \alpha \}=\binom{m+2}{i+1}$. For $\beta\in \mH_{i+1}$ and $S\subset [n]$, define 
$$
\mathcal K(S,\beta) = \big\{ \alpha \in \mathcal H_{m+2}:\beta\subset
\alpha, \, V(\alpha)\setminus V(\beta)\subset [n]\setminus S  \big\}. 
$$
Define, for $S\subset [n]$, 
$$
Z_S:= \sum_{(\alpha, \beta):\, \beta\in \mH_{i+1}(S), \, \alpha\in K(S,\beta)}
  I_\alpha^{(m+2)}, 
$$
and $R_\U:=Z-Z_\U$. Additionally, let $z=(1-2^{m+4}\vep)\E[Z_\U]$ and $r=(1-\vep)\E[Z]-z$. By the FKG inequality, we have 
\begin{align*}
\P\big( X_m\le (1-\vep)\E[X_m] \big) &= \P(R_\U + Z_\U\le r+z) \\
&\ge \P(R_\U \le r, \, Z_\U\le z) \ge \P(R_\U\le r)\P(Z_\U\le z). 
\end{align*}
To complete the proof, it is sufficient to demonstrate that 
\begin{align}
\P(R_\U \le r) &\ge 1-\frac{\Lambda(X_m)}{\Lambda(X_m) +(\vep \mu(X_m))^2}, \label{e:suff.1st}\\
\P(Z_\U \le z) &\ge \frac{1}{2}\, \Big( 1-\frac{\Lambda(X_m)}{\Lambda (X_m)+2(\vep \mu(X_m))^2} \Big)\P\big(Y_i\le (1-2^{m+5}\vep)\E[Y_i]   \big), \label{e:suff.2nd}
\end{align}
Indeed, by 
\eqref{e:EXm}, Lemma \ref{l:Lambda.Delta} $(i)$ and \eqref{e:pos.assump}
we have  $(\vep \mu(X_m))^2 \ge \Lambda(X_m)$ for sufficiently large
$n$, so  \eqref{e:suff.1st} and \eqref{e:suff.2nd}  imply the claim of
Proposition \ref{p:small.q.face}. 

For the proof of \eqref{e:suff.1st}, let $\mathscr X$ be a random subset of $[n]$, whose elements are  chosen uniformly and independently of $\Gamma_\bp$, such that $|\mathscr X| = |\U|=\lfloor n/2 \rfloor$. Then, 
$$
\E[Z_{\mathscr X} | \Gamma_\bp] =\sum_{(\alpha, \beta)\in \mathcal H_{m+2}\times \mH_{i+1}}\one \{ \beta\subset \alpha \}\, \P \big( V(\alpha)\setminus V(\beta)\subset [n]\setminus \mathscr X, \, V(\beta)\subset \mathscr X \big)\, I_\alpha^{(m+2)}.  
$$
By an easy calculation, 
$$
\P \big( V(\alpha)\setminus V(\beta)\subset [n]\setminus \mathscr X, \, V(\beta)\subset \mathscr X \big) = \binom{n}{\lfloor n/2 \rfloor}^{-1}\binom{n-(m+2)}{\lfloor n/2 \rfloor-(i+1)}\to 2^{-(m+2)}, \ \ \ n \to\infty, 
$$
and so, for large $n$, 
$$
\E[Z_{\mathscr X}] = \E\big[ \E[Z_{\mathscr X} | \Gamma_\bp] \big] \ge 2^{-(m+3)} \E[Z]. 
$$
It follows from the independence of $\Gamma_\bp$ and $\mathscr X$,
together with the symmetry, that 
$$
\E[Z_{\mathscr X}] = \sum_{S\subset [n], \, |S|=|\U|} \E [Z_{\mathscr X} | \mathscr X=S] \, \P(\mathscr X=S) = \E[Z_\U], 
$$
and hence, for large $n$, 
\begin{equation}  \label{e:lower.bdd.E.ZU}
\E[Z_\U] \ge 2^{-(m+3)} \E[Z]. 
\end{equation}
For $\alpha, \alpha'\in \mathcal H_{m+2}$, the FKG inequality ensures that Cov$(I_\al^{(m+2)}, I_{\alpha'}^{(m+2)})\ge 0$, which in turn implies that 
$$
\text{Var}(R_\U) \le \text{Var}(Z) = \tau^2 \text{Var}(X_m) \le \tau^2 \Lambda(X_m). 
$$
From \eqref{e:lower.bdd.E.ZU} we see that 
$$
r-\E[R_\U] \ge \vep \E[Z] = \tau\vep \mu(X_m). 
$$
Finally, the one-sided Chebyshev's inequality, which is given in
\cite[Claim 16]{janson:warnke:2016}, with $\nu=\tau^2 \Lambda(X_m)$ and $t=\tau \vep \mu(X_m)$,  completes the proof of  \eqref{e:suff.1st}.

We now turn our attention to \eqref{e:suff.2nd}. We start by
introducing additional   notation: 
for $\al_1, \al_2 \in \mH_{m+2}$ and $\beta_1, \beta_2 \in \mH_{i+1}$
denote 
$$
K_{\al_1\setminus \beta_1}^{(m+2,i+1)} := \one \big\{ \text{all strict 
  subwords of $\alpha_1$ that are not subwords of $\beta_1$ are in} \ \Gamma_\bp   \big\}, 
$$
and
\begin{align*}
&K_{(\alpha_1\cup \alpha_2)\setminus (\beta_1\cup \beta_2)}^{(m+2,i+1)} := \one \Big\{ 
\text{all strict 
  subwords of $\alpha_1$ or  $\alpha_2$} \\
&\hskip 2in \text{
that are not subwords of either
  $\beta_1$ or  $\beta_2$ are in} \ \Gamma_\bp \Big\}. 
\end{align*}
We define two events, $\mathcal E$ and $\mathcal D$, as follows. 
Let $\mathcal E := \big\{  Y_i\le (1-2^{m+5}\vep)\E[Y_i] \big\}$. If
we denote 
$$
V_\U^+ := \sum_{(\beta_1, \beta_2)\in \mH_{i+1}(\U)\times
  \mH_{i+1}(\U)}  J_{\beta_1}^{(i+1)} J_{\beta_2}^{(i+1)}
\sum_{\substack{(\alpha_1,\alpha_2)\in \mathcal K(\U, \beta_1)\times
    \mathcal K(\U, \beta_2), \\  |\alpha_1\cap  \alpha_2|\geq q+1 }}
\E[K_{(\alpha_1\cup \alpha_2)\setminus (\beta_1\cup
  \beta_2)}^{(m+2,i+1)}], 
$$
then we set $\mathcal D:= \big\{  V_\U^+\le 2 \E[V_\U^+]
\big\}$. Appealing to the FKG inequality, as  well as Markov's inequality, 
$$
\P(\mathcal D\cap \mathcal E)\ge \P(\mathcal D)\P(\mathcal E)\ge \frac{1}{2}\, \P\big( Y_i \le (1-2^{m+5}\vep)\E[Y_i] \big). 
$$
Let $\Gamma_\bp^{(i+1)}(\U)$ be the subset of $\Gamma_\bp$ consisting
of words on $\U$ with  length at most $i+1$. 
Since $\mathcal D$ and $\mathcal E$ are both measurable with respect to $\Gamma_\bp^{(i+1)}(\U)$, 
\begin{equation}  \label{e:ind.outside}
\P(Z_\U\le z) \ge \E\Big[ \P \big( Z_\U\le z\, | \, \Gamma_\bp^{(i+1)}(\U)\big)\one_{\mathcal D\cap \mathcal E}  \Big]. 
\end{equation}
Consider the conditional probability 
$$
\P^*(\cdot) := \P \big( \cdot\, | \,  \Gamma_\bp^{(i+1)}(\U)\big), 
$$
and   the corresponding (conditional) mean  and variance, $\E^*$ and
$\text{Var}^*$. We claim that, on the event $\mathcal D\cap
\mathcal E$, 
\begin{align}
z-\E^*[Z_\U] &\ge 2 \tau \vep \mu (X_m), \label{e:exp.star} \\
\text{Var}^*(Z_\U)&\le 2 \tau^2 \Lambda(X_m). \label{e:var.star}
\end{align}
Once \eqref{e:exp.star} and \eqref{e:var.star} have been established,
the 
one-sided Chebyshev's inequality says that, on the event $\mathcal D\cap
\mathcal E$, 
$$
\P^*(Z_\U\ge z) \le \P^*\big( Z_\U -\E^*[Z_\U]\ge 2\tau \vep \mu(X_m) \big) \le \frac{\Lambda(X_m)}{\Lambda(X_m) + 2(\vep \mu(X_m))^2}. 
$$
Substituting this inequality into \eqref{e:ind.outside}, gives us \eqref{e:suff.2nd} as required. 

For the proof of \eqref{e:exp.star}, note that for $\beta\in \mH_{i+1}(\U)$ and $\alpha\in \mathcal K(\U, \beta)$, $J_\beta^{(i+1)}$ is measurable with respect to $\Gamma_\bp^{(i+1)}(\U)$, whereas $K_{\alpha \setminus \beta}^{(m+2,i+1)}$ is independent of $\Gamma_\bp^{(i+1)}(\U)$. Therefore, 
$$
\E^*[Z_\U] = \sum_{\beta\in \mH_{i+1}(\U)} J_\beta^{(i+1)} \E\Big[ \sum_{\alpha\in \mathcal K(\U, \beta)} K_{\alpha\setminus \beta}^{(m+2,i+1)}\Big]. 
$$
In the last expression, by symmetry, $W_\beta:=\sum_{\alpha\in
  \mathcal K(\U, \beta)}K_{\alpha \setminus \beta}^{(m+2,i+1)}$ has
expectation 
independent of the choice of $\beta\in \mH_{i+1}(\U)$. It  follows
that for any fixed $\tilde \beta\in \mH_{i+1}(\U)$, we have 
$\E^*[Z_\U]=Y_i\E[W_{\tilde \beta}]$, and, hence,
$\E[Z_\U]=\E[Y_i]\E[W_{\tilde \beta}]$.   Now on $\mathcal E$ we have $\E^*[Z_\U]\le (1-2^{m+5}\vep)\E[Z_\U]$ and by \eqref{e:lower.bdd.E.ZU}, 
$$
z-\E^*[Z_\U] \ge z-(1-2^{m+5}\vep)\E[Z_\U]\ge 2\tau \vep \mu(X_m), 
$$
proving that \eqref{e:exp.star} holds on $\mathcal E\supset \mathcal
D\cap \mathcal E$. 
Furthermore, arguments similar to those used for Equ.~(67) and (68) in
\cite{janson:warnke:2016}, show that  \eqref{e:var.star}
holds on $\mathcal D \supset \mathcal
D\cap \mathcal E$. 
\end{proof}
\medskip

Lemma \ref{l:variance.asym1} below calculates  variance asymptotics
for $S_j$,   $T_{j,r}$, and $V_{m+2}$, which are respectively
defined at \eqref{e:def.Sj},  \eqref{e:def.T.jr}, and \eqref{e:def.V.m+2}. Similarly, Lemma \ref{l:variance.asym2} gives variance asymptotics  for $S_{m+2}^\uparrow$, $S_j^{\uparrow}$, and $T_{m+2}^\downarrow$ (see \eqref{e:def.S.m+2.uparrow}, \eqref{e:def.Sj2.uparrow}, and \eqref{e:def.T.m+2.downarrow}, respectively). 
Since Lemmas \ref{l:variance.asym1} and  \ref{l:variance.asym2} can be
proven in the same way, we give only a proof of Lemma \ref{l:variance.asym1}. 

\begin{lemma}  \label{l:variance.asym1}
As $n\to\infty$, 
\begin{align}  \label{e:Var.S.m+2}
\text{Var}(S_{m+2}) \sim \text{Var}(V_{m+2}) \sim \begin{cases}
\frac{n^{2(\taual)-\tau_q}}{(q+1)! ((m-q+1)!)^2}  & \text{ if } \taual>\tau_q, \\[7pt]
\frac{n^\taual}{(m+2)!} & \text{ if } \taual <\tau_q, \\[7pt]
\Big\{ \frac{1}{(q+1)! ((m-q+1)!)^2}  + \frac{1}{(m+2)!}  \Big\}n^\taual & \text{ if } \taual =\tau_q.
\end{cases}
\end{align}
Further,
\begin{equation}  \label{e:Var.T.m+2}
\text{Var}(T_{m+2}) \le C^* \big(  n^{2(\taual)-\tau_q} \vee n^\taual\big). 
\end{equation}
For $j\ge m+3$, 
\begin{align}
&\text{Var}(S_j) \le C_j \big(  n^{2(\taual)-\tau_q} \vee n^\taual\big)  \label{e:Var.S.j}
\end{align}
for a $j$-dependent $C_j>0$.
For $j \ge m+3$ and $r\ge 1$, 
\begin{equation}  \label{e:Var.T.jr}
\text{Var}(T_{j,r} ) \le C_j \big(  n^{2(\taual)-\tau_q} \vee n^\taual\big)\, n^{1-\psi_m}. 
\end{equation}
Finally, 
\begin{equation}  \label{e:Var.V.m+2-T.m+2}
\text{Var}(V_{m+2}-T_{m+2}) \le C^* \big(  n^{2(\taual)-\tau_q} \vee n^\taual\big)\, n^{(1\vee \psi_{m-1})-\psi_m}. 
\end{equation}
\end{lemma}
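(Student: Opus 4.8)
The plan is to prove all six estimates by the second--moment method. Each random variable involved is a sum $\sum_\sigma Z_\sigma$ of $\{0,1\}$-valued summands (for $V_{m+2}-T_{m+2}$ the summand is the $\{0,1\}$-valued difference $\lambda_\sigma^{(m+2,m)}-\eta_\sigma^{(m+2,1,m)}$), indexed by subsets $\sigma\subset[n]$ of a fixed cardinality $|\sigma|\in\{m+2,j\}$, so $\text{Var}\big(\sum_\sigma Z_\sigma\big)=\sum_{\sigma,\tau}\text{Cov}(Z_\sigma,Z_\tau)$. I would organize this double sum by $\ell:=|\sigma\cap\tau|$; the number of ordered pairs with $|\sigma\cap\tau|=\ell$ equals $\binom{n}{|\sigma|}\binom{|\sigma|}{\ell}\binom{n-|\sigma|}{|\sigma|-\ell}\sim c_\ell\,n^{2|\sigma|-\ell}$. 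For $S_{m+2}$, $V_{m+2}$, and $S_j$ the indicator $Z_\sigma$ depends only on the faces spanned by vertices of $\sigma$, so $Z_\sigma$ and $Z_\tau$ are independent, hence uncorrelated, whenever $\ell\le q$, since $\sigma$ and $\tau$ then span no common random face (faces of dimension below $q$ being deterministically present). Thus only $\ell\in\{q+1,\dots,|\sigma|\}$ contributes.

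The diagonal $\ell=|\sigma|$ gives $\sum_\sigma\text{Var}(Z_\sigma)\le\sum_\sigma\E[Z_\sigma]$, which by \eqref{e:EXm} (and the factor $1-p_{m+1}\to1$ in the $V_{m+2}$ case) is $\sim n^{\taual}/(m+2)!$ for $S_{m+2}$ and $V_{m+2}$, and by Lemma \ref{l:estimate.st.conn} is $O\big(n^{\taual}(n^{1-\psi_m})^{j-m-2}\big)$ for $S_j$ and $T_{j,r}$ with $j\ge m+3$. For $q+1\le\ell\le|\sigma|-1$ I bound $\text{Cov}(Z_\sigma,Z_\tau)\le\E[Z_\sigma Z_\tau]$. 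In the $S_{m+2}$ and $V_{m+2}$ cases the joint probability is exactly $\prod_{i=q}^m p_i^{2\binom{m+2}{i+1}-\binom{\ell}{i+1}}$ (times $(1-p_{m+1})^2\to1$), so the $\ell$-term equals $c_\ell\,n^{2(\taual)-\tau_{\ell-1}}\,(1+o(1))$ after using $\ell-\sum_{i=1}^{\ell-1}\binom{\ell}{i+1}\al_i=\tau_{\ell-1}$; the subtracted product $\E[Z_\sigma]\E[Z_\tau]\asymp n^{2(\taual)-2(m+2)}$ is of strictly smaller order for every $\ell\ge q+1$, so in fact $\text{Cov}(Z_\sigma,Z_\tau)\sim\E[Z_\sigma Z_\tau]$. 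Summing over $\ell=q+1,\dots,m+1$ and invoking the strict monotonicity \eqref{e:tau.maximized.at.crit} together with $\tau_m>\taual$ (i.e.\ \eqref{e:taum.and.taum+1}, which makes the diagonal $n^{\taual}$ dominate the $\ell=m+1$ off-diagonal term $n^{2(\taual)-\tau_m}$) identifies the off-diagonal total as $\sim n^{2(\taual)-\tau_q}/\big((q+1)!((m-q+1)!)^2\big)$ when $\taual>\tau_q$, and as of strictly smaller order than the diagonal when $\taual<\tau_q$; this yields \eqref{e:Var.S.m+2}, the boundary case $\taual=\tau_q$ being the sum of the two leading contributions. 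For $S_j$ and $T_{j,r}$ ($j\ge m+3$) the same computation with the face count $\binom{m+2}{i+1}+(j-m-2)\binom{m}{i}$ of Lemma \ref{l:estimate.st.conn} in place of $\binom{m+2}{i+1}$ produces an extra factor $(n^{1-\psi_m})^{2(j-m-2)}$ in every off-diagonal term; discarding it gives \eqref{e:Var.S.j}, while keeping one power of $n^{1-\psi_m}$ gives \eqref{e:Var.T.jr} (the diagonal is already $O(n^{\taual}n^{1-\psi_m})$).

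For $V_{m+2}-T_{m+2}$ I would write $D_\sigma:=\lambda_\sigma^{(m+2,m)}-\eta_\sigma^{(m+2,1,m)}=\lambda_\sigma^{(m+2,m)}\,\one\{X(\sigma,\bp)\text{ is not a maximal $m$-strongly connected subcomplex}\}$; non-maximality forces some vertex $w\in[n]\setminus\sigma$ to form an $m$-face with an $(m-1)$-face of $\sigma$, an event of conditional probability $\asymp n^{-\psi_m}$, so $\E[D_\sigma]=O\big(n^{\taual-(m+2)}\,n^{1-\psi_m}\big)$ by a union bound over the $\asymp n$ choices of $w$; the diagonal of $\text{Var}\big(\sum_\sigma D_\sigma\big)$ is then $O\big(n^{\taual}n^{1-\psi_m}\big)$. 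For the off-diagonal one bounds $\text{Cov}(D_\sigma,D_\tau)\le\E[D_\sigma D_\tau]$ by the probability that both $m$-cycles are present and both non-maximal; a short case analysis on how the two witnessing $m$-faces can overlap inside $\sigma\cap\tau$ shows the worst case contributes the factor $n^{\psi_{m-1}-\psi_m}$ (when the two $m$-faces share an $(m-1)$-subface, which is possible only for large $\ell$), while in all other cases one obtains the better $n^{2(1-\psi_m)}$; since $\psi_{m-1}-\psi_m<1-\psi_m$ whenever $\psi_{m-1}\ge1$, this produces \eqref{e:Var.V.m+2-T.m+2}. Finally \eqref{e:Var.T.m+2} is immediate from $\text{Var}(T_{m+2})\le2\,\text{Var}(V_{m+2})+2\,\text{Var}(V_{m+2}-T_{m+2})$ together with \eqref{e:Var.S.m+2} and \eqref{e:Var.V.m+2-T.m+2}. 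Lemma \ref{l:variance.asym2} is proved word for word, replacing $[n]$ by $[v_{N+1}]$ or $[v_N]$ and each $p_i$ by $p_i^\uparrow$ or $p_i^\downarrow$, which only changes constants since $v_{N+1}/v_N\to1$; hence it is omitted.

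The routine portion is the pair counting and the $S_{m+2}$, $V_{m+2}$, $S_j$ estimates, which mirror the variance computations of \cite{owada:samorodnitsky:thoppe:2021} almost verbatim. I expect the main obstacle to be twofold. First, obtaining the exact constants in the asymptotic equivalences of \eqref{e:Var.S.m+2} requires checking that the subtracted mean-products and all off-diagonal terms with $q+1<\ell\le m+1$ are genuinely of lower order, which rests on \eqref{e:tau.maximized.at.crit} and \eqref{e:taum.and.taum+1}. Second, and more delicate, the quantities $T_{m+2}$, $T_{j,r}$ and $V_{m+2}-T_{m+2}$ involve the maximality constraint, so $\eta_\sigma$ also depends on faces straddling $\sigma$ and its complement; one must track precisely which straddling faces are shared between the cycle-presence and maximality conditions of two overlapping vertex sets, and for small overlaps (where the cycle-parts are already independent) control the resulting weak coupling by subtracting the mean-product rather than crudely bounding the second moment — this is where the threshold $1\vee\psi_{m-1}$ in \eqref{e:Var.V.m+2-T.m+2} arises, and where the assumption \eqref{e:pos.assump} is used to keep the coupling contributions below $n^{2(\taual)-\tau_q}\vee n^{\taual}$.
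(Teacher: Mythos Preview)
Your overall strategy---decompose the variance by the overlap size $\ell=|\sigma\cap\tau|$ and bound each stratum via second moments---is exactly the paper's, and your treatment of $S_{m+2}$ and $V_{m+2}$ matches the paper line by line. The shortcut $\text{Var}(T_{m+2})\le 2\,\text{Var}(V_{m+2})+2\,\text{Var}(V_{m+2}-T_{m+2})$ is a clean alternative to what the paper does (it instead quotes the ready-made stratum estimates \eqref{e:mT.ell} from \cite{owada:samorodnitsky:thoppe:2021} directly, for both $T_{m+2}$ and $T_{j,r}$).

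Two places where your sketch does not survive as written. First, for $S_j$ with $j\ge m+3$ your claim that ``every off-diagonal term'' carries the factor $(n^{1-\psi_m})^{2(j-m-2)}$ is false once $\ell\ge m+2$: then the two $m$-strongly connected structures on $\sigma$ and $\tau$ can share a chain of $\ell-m$ many $m$-faces rather than a single $(\ell-1)$-simplex, and the correct bound on the joint probability is the one in \eqref{e:many.overlapping}, yielding only the smaller saving $(n^{1-\psi_m})^{2j-m-\ell-3}$. Since $\tau_m>\taual$ these terms are still absorbed into \eqref{e:Var.S.j}, but the mechanism is different from what you wrote and must be argued separately. Second, your account of where $1\vee\psi_{m-1}$ enters \eqref{e:Var.V.m+2-T.m+2} is not the right one. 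The paper does not bound $\E[D_\sigma D_\tau]$ with two witnesses; it bounds $\U_\ell\le n^{2(m+2)-\ell}\,\E[(\lambda_\tau-\eta_\tau)\lambda_\sigma]$, so only \emph{one} witness $m$-face is needed, and the factor $n^{\psi_{m-1}-\psi_m}$ arises because that single witness vertex may lie in $\sigma\setminus\tau$ and reuse the $(\ell-1)$-simplex on $\sigma\cap\tau$ already forced by $A_\ell(K,K)$, giving conditional probability $n^{\psi_\ell-\psi_m}\le n^{\psi_{m-1}-\psi_m}$ for $\ell\le m-1$ (while for $\ell\in\{m,m+1\}$ the conditional non-maximality probability is $1$, and $\tau_m>\taual$ again rescues the bound). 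Your ``two witnesses sharing an $(m-1)$-subface'' picture is not what drives the exponent, and the sentence ``$\psi_{m-1}-\psi_m<1-\psi_m$ whenever $\psi_{m-1}\ge1$'' is the wrong way around; the maximum $(1\vee\psi_{m-1})-\psi_m$ simply records whichever of the two savings is worse.
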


\begin{lemma}  \label{l:variance.asym2}
For $j\ge m+2$, 
\begin{align}
\text{Var}(S_{j}^\uparrow) &\le C_j\big( v_{N+1}^{2(\taual)-\tau_q} \vee v_{N+1}^\taual\big), \label{e:Var.S.m+2.uparrow}
\end{align}
and
\begin{align}
\text{Var}(T_{m+2}^\downarrow) &\le C^*\big( v_{N}^{2(\taual)-\tau_q} \vee v_{N}^\taual\big).   \label{e:Var.T.m+2.downarrow}
\end{align}
\end{lemma}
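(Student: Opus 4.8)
The plan is to follow the route of the proof of Lemma~\ref{l:variance.asym1}, so I will only sketch the skeleton and point out where the present statement differs. For each of the three quantities I would first expand the variance as a double sum of covariances over ordered pairs of index sets — e.g.\ $\mathrm{Var}(S_j^{\uparrow})=\sum_{\sigma,\tau}\mathrm{Cov}\bigl(\xi_\sigma^{(j,m,\uparrow)},\xi_\tau^{(j,m,\uparrow)}\bigr)$, and analogously for $S_{m+2}^{\uparrow}$ and $T_{m+2}^{\downarrow}$ — and then stratify the pairs by the overlap $\ell=|\sigma\cap\tau|$. The decisive structural fact, exactly as in Lemma~\ref{l:variance.asym1}, is that for $\ell\le q$ the two indicators involve no common random word — all words of length at most $q$ are present with probability one because $\al_i=0$ for $i<q$ — so these pairs contribute zero; for $\ell\ge q+1$ one counts the $O(n^{2j-\ell})$ ordered pairs of $j$-sets with prescribed overlap and bounds the joint moment by a product of connectivity probabilities of the form $\prod_{i=q}^m p_i^{2\binom{m+2}{i+1}-\binom{\ell}{i+1}}$, multiplied, when $j>m+2$, by a further factor $\prod_{i=q}^m p_i^{\binom{m}{i}}$ for each vertex beyond the $m+2$ carrying an $m$-cycle. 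Converting to powers of $n$ via \eqref{e:def.psi} and the definition of $\tau_j$, the $\ell$-stratum is of order $n^{2(\taual)-\tau_{\ell-1}}$ (and strictly smaller when $j>m+2$, since $\psi_m>1$); using the monotonicity \eqref{e:tau.maximized.at.crit} together with \eqref{e:taum.and.taum+1}, every off-diagonal stratum is dominated either by the $\ell=q+1$ stratum, of order $n^{2(\taual)-\tau_q}$, or by the diagonal, which is of order $\E[\,\cdot\,]$, i.e.\ $n^{\taual}$. This yields the right-hand sides of \eqref{e:Var.S.m+2.uparrow} and \eqref{e:Var.T.m+2.downarrow}.

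Passing from the $[n]$-indexed quantities of Lemma~\ref{l:variance.asym1} to the ones here needs only two cosmetic adjustments. First, $S_j^{\uparrow}$ and $S_{m+2}^{\uparrow}$ are built on $[v_{N+1}]$ with probabilities $p_i^{\uparrow}=v_N^{-\al_i}$, and $T_{m+2}^{\downarrow}$ on $[v_N]$ with probabilities $p_i^{\downarrow}=v_{N+1}^{-\al_i}$ (and $p_{m+1}^{\uparrow}$); since $v_{N+1}/v_N\to1$ we have $v_N^{-\al_i}\sim v_{N+1}^{-\al_i}$, so I would simply replace every occurrence of $n$ in the estimates of Lemma~\ref{l:variance.asym1} by $v_{N+1}$ (equivalently $v_N$), and since all these estimates hold up to multiplicative constants the stated upper bounds are preserved. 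Second, for $S_j^{\uparrow}$ with $j\ge m+3$ the combinatorics is that leading to \eqref{e:Var.S.j}: the extra factors $v_{N+1}^{1-\psi_m}<1$ — legitimate by the first part of \eqref{e:pos.assump} — attached to each vertex beyond an $m$-cycle keep every term below the bound obtained for $j=m+2$, so the single estimate \eqref{e:Var.S.m+2.uparrow} covers all $j\ge m+2$.

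The only ingredient needing slightly more attention is $T_{m+2}^{\downarrow}$, because of the extra ``maximality''-type clause that no vertex of $[v_{N+1}]\setminus\sigma$ attaches an $m$-face to an $(m-1)$-face of $\sigma$ under $(p_1^{\uparrow},\dots,p_m^{\uparrow})$. I would handle it exactly as the maximality requirement built into $\eta_\sigma^{(m+2,1,m)}$ in the derivation of \eqref{e:Var.T.m+2}: factor each indicator as the indicator of forming a non-trivial $m$-cycle times a $[0,1]$-valued maximality factor, bound the joint second moments of the $\eta^{\downarrow}$'s from above by those of the $m$-cycle indicators alone — which returns one to the estimates of the first paragraph — and bound the diagonal by $\E[T_{m+2}^{\downarrow}]\sim v_{N+1}^{\taual}/(m+2)!$, already established in \eqref{e:E.T.m+2.downarrow}. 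I expect the main point to check here is that the separation of interior faces (governed by $p^{\downarrow}$) from extension faces (governed by $p^{\uparrow}$) in the construction of $T_{m+2}^{\downarrow}$ keeps pairs with $|\sigma\cap\tau|\le q$ decoupled just as cleanly as for $S_{m+2}^{\uparrow}$, so that no covariance term survives from that regime and the final bound coincides with that of $\mathrm{Var}(T_{m+2})$; granting this, the remainder is a transcription of the proof of Lemma~\ref{l:variance.asym1}.
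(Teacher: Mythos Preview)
Your overall strategy is exactly the paper's: both lemmas are proved by the same decomposition into overlap strata, and the paper explicitly says so. The treatment of $S_j^\uparrow$ is fine, and your observation that $v_{N+1}/v_N\to1$ lets you swap $n$ for $v_{N+1}$ throughout is all that is needed there.

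There is, however, a small but genuine inaccuracy in your handling of $T_{m+2}^\downarrow$. Your expectation that pairs with $|\sigma\cap\tau|\le q$ ``decouple just as cleanly as for $S_{m+2}^\uparrow$'' is false for $\ell=q-1$ and $\ell=q$: the maximality clauses for $\sigma$ and $\tau$ \emph{do} share random words. Concretely, if $|\sigma\cap\tau|=q-1$, then for any $v\in\tau\setminus\sigma$ and $w\in\sigma\setminus\tau$ the $q$-face on $\{v,w\}\cup(\sigma\cap\tau)$ appears in the extension condition for both $\sigma$ and $\tau$, so the covariance is not zero. Your fallback --- bounding $\mathrm{Cov}(\eta_\sigma^\downarrow,\eta_\tau^\downarrow)$ by $\E[\lambda_\sigma\lambda_\tau]$ --- is too crude here: for $\ell\le q$ this gives a term of order $v_N^{2(\taual)-\ell}$, which for small $\ell$ dominates the target bound. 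The correct route is the one encoded in \eqref{e:mT.ell}: only $\ell\le q-2$ gives zero covariance, while the strata $\ell=q-1$ and $\ell=q$ contribute nonzero but harmless terms of order $v_N^{2(\taual)-\tau_q}(v_N^{1-\psi_m})^2$ and $v_N^{2(\taual)-\tau_q}(v_N^{1-\psi_m})$ respectively, both absorbed into the final bound since $\psi_m>1$. Once you replace your decoupling claim by this finer case analysis (which you in fact already cite via \eqref{e:Var.T.m+2}), the rest of your sketch goes through.
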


\begin{proof}[Proof of Lemma \ref{l:variance.asym1}] We begin with the proof of \eqref{e:Var.S.m+2}. \\
\textit{Proof of \eqref{e:Var.S.m+2}}: For ease of notation, we drop the superscript from $\xi_\sigma^{(m+2,m)}$ and write that 
\begin{align}
\begin{split}  \label{e:decomp.var.S.m+2}
\text{Var}(S_{m+2})  
&=\sum_{\substack{\sigma \subset [n], \\ |\sigma|=m+2}}\sum_{\substack{\tau \subset [n], \\ |\tau|=m+2}}  \big\{ \E[\xi_\sigma \xi_\tau] - \E[\xi_\sigma]\E[\xi_\tau] \big\}    \\
&=\sum_{\ell=0}^{m+2} \sum_{\substack{\sigma \subset [n], \\ |\sigma|=m+2}}\sum_{\substack{\tau \subset [n], \\ |\tau|=m+2}}  \big\{ \E[\xi_\sigma \xi_\tau] - \E[\xi_\sigma]\E[\xi_\tau] \big\} \, \one \{ |\sigma\cap \tau|=\ell \}  \\
&= \sum_{\ell=0}^{m+2} \binom{n}{m+2}\binom{m+2}{\ell} \binom{n-m-2}{m+2-\ell} \\
&\hskip 1in \big\{ \E[\xi_{\sigma_{m+2}} \xi_{\tau_{m+2}}] - \E[\xi_{\sigma_{m+2}}]\E[\xi_{\tau_{m+2}}] \big\} \, \one \{ |\sigma_{m+2}\cap \tau_{m+2}|=\ell \}. 
\end{split}
\end{align}
For every $\sigma\subset [n]$ with $|\sigma|=m+2$, 
\begin{equation}  \label{e:1st.xi.sigma}
\E[\xi_\sigma] = \prod_{i=q}^m p_i^{\binom{m+2}{i+1}} = n^{-\sum_{i=q}^m \binom{m+2}{i+1}\al_i}. 
\end{equation}
Let $\sigma,\tau\subset [n]$ with $|\sigma|=|\tau|=m+2$. 
If $\ell=|\sigma\cap \tau|\in \{ 0,\dots,q \}$, 
then $\xi_\sigma$ and $\xi_\tau$ are independent, because all the
$(\ell-1)$-faces exist in $X([n],\bp)$ with probability
$1$. Therefore, 
$\E[\xi_\sigma\xi_\tau] = \E[\xi_\sigma]\E[\xi_\tau]$. 
Next, if  $\ell\in \{ q+1,\dots,m+1 \}$, then $\sigma \cap \tau$ contains $\binom{\ell}{i+1}$ faces of dimension $i$ for each $q\le i \le \ell-1$; hence, 
\begin{equation}  \label{e:2nd.xi.sigma.xi.tau}
\E[\xi_\sigma\xi_\tau]=\prod_{i=q}^m p_i^{\binom{m+2}{i+1}} \times \prod_{i=q}^m p_i^{\binom{m+2}{i+1}-\binom{\ell}{i+1}} = n^{-2\sum_{i=q}^m \binom{m+2}{i+1}\al_i +\sum_{i=q}^{\ell-1}\binom{\ell}{i+1}\al_i}. 
\end{equation}
In this case, by \eqref{e:1st.xi.sigma} and \eqref{e:2nd.xi.sigma.xi.tau}, as $n\to\infty$, 
\begin{align*}
&\binom{n}{m+2}\binom{m+2}{\ell} \binom{n-m-2}{m+2-\ell} \big\{ \E[\xi_\sigma \xi_\tau] - \E[\xi_\sigma]\E[\xi_\tau] \big\} \\ 
&\sim \frac{n^{2(m+2)-\ell}}{\ell! \big( (m+2-\ell)! \big)^2}\,\Big( n^{-2\sum_{i=q}^m \binom{m+2}{i+1}\al_i +\sum_{i=q}^{\ell-1}\binom{\ell}{i+1}\al_i} - n^{-2\sum_{i=q}^m \binom{m+2}{i+1}\al_i }  \Big)\\
&\sim \frac{n^{2(\taual)-\tau_{\ell-1}}}{\ell! \big( (m+2-\ell)! \big)^2}.
\end{align*}
Finally, when $\ell=m+2$, we have as $n\to\infty$, 
\begin{align*}
&\binom{n}{m+2}\binom{m+2}{m+2} \binom{n-m-2}{0} \big\{ \E[\xi_\sigma \xi_\tau] - \E[\xi_\sigma]\E[\xi_\tau] \big\} \\ 
&\sim \frac{n^{m+2}}{(m+2)!}\, \big\{ \E[\xi_\sigma]- \big( \E[\xi_\sigma] \big)^2 \big\}\sim \frac{n^\taual}{(m+2)!}. 
\end{align*}
Substituting these asymptotic results back into \eqref{e:decomp.var.S.m+2}, 
\begin{align*}
\text{Var}(S_{m+2}) &\sim \sum_{\ell=q+1}^{m+1}\frac{n^{2(\taual)-\tau_{\ell-1}}}{\ell! \big( (m+2-\ell)! \big)^2} + \frac{n^\taual}{(m+2)!}  \\
&\sim \left( \frac{n^{2(\taual)-\tau_q}}{(q+1)!\big( (m-q+1)! \big)^2} \vee \frac{n^{2(\taual)-\tau_m}}{(m+1)!}\right) + \frac{n^\taual}{(m+2)!}, \ \ \ n\to\infty.
\end{align*}
Since $\tau_m>\taual$ (see \eqref{e:taum.and.taum+1}),  this is the
asymptotics claimed for Var$(S_{m+2})$ in \eqref{e:Var.S.m+2}. 
One can derive variance asymptotics for $V_{m+2}$ in a nearly
identical  manner. 
\medskip

\noindent \textit{Proof of \eqref{e:Var.S.j}}: Removing again the
superscript from $\xi_\sigma^{(j,m)}$, we have  
$$
\text{Var}(S_j) = \sum_{\ell=0}^j \binom{n}{j} \binom{j}{\ell} \binom{n-j}{j-\ell} \big\{ \E[\xi_{\sigma_j}\xi_{\tau_j}] - \E[\xi_{\sigma_j}]\E[\xi_{\tau_j}] \big\}\, \one \{ |\sigma_j \cap \tau_j|=\ell \}. 
$$
Let $\sigma,\tau\subset [n]$ with $|\sigma|=|\tau|=j$. 
 As before, if $\ell=|\sigma\cap \tau|\in\{ 0,\dots,q \}$, then  $\E[\xi_\sigma\xi_\tau] = \E[\xi_\sigma]\E[\xi_\tau]$. 
Next, if $\ell\in \{q+1,\dots,m+1\}$, then
\begin{align}
\begin{split}  \label{e:decomp.var.S.j.l.in.0.q}
\binom{n}{j} \binom{j}{\ell} \binom{n-j}{j-\ell} \big\{ \E[\xi_\sigma\xi_\tau] - \E[\xi_\sigma]\E[\xi_\tau] \big\} &\le n^{2j-\ell} \E[\xi_\sigma \xi_\tau] \\
&= n^{2j-\ell} \sum_{K: |K|=j} \sum_{K': |K'|=j} \P \big(A_\ell(K,K')\big), 
\end{split}
\end{align}
where the sum is taken over all $m$-strongly connected complexes on
$j$ vertices, with at least one trivial or non-trivial $m$-cycle, up
to an isomorphism class, and 
\begin{align}  \label{e:def.AKK'}
A_\ell(K,K') := \big\{ &X(\sigma,\bp)  \  \text{contains a
  subcomplex isomorphic to $K$,} \\
\notag   &X(\tau,\bp)  \  \text{contains a
  subcomplex isomorphic to $K'$}\big\}. 
\end{align}
 In order to bound the sum in \eqref{e:decomp.var.S.j.l.in.0.q},
 observe that $\P\big(A_\ell(K,K')\big)$ is maximized when all of the
 $\ell$ common points of $\sigma$ and $\tau$ 
are taken from a single $(\ell-1)$-face. Then, according to Lemma \ref{l:estimate.st.conn}, 
\begin{align*}
\P\big(A_\ell(K,K') \big) &\le C_j \Big\{ \prod_{i=q}^m p_i^{\binom{m+2}{i+1}} \Big( \prod_{i=q}^m p_i^{\binom{m}{i}} \Big)^{j-m-2} \Big\}^2\prod_{i=q}^{\ell-1}p_i^{-\binom{\ell}{i+1}} \\
&=C_j n^{-2\sum_{i=q}^m \binom{m+2}{i+1}\al_i +\sum_{i=q}^{\ell-1}\binom{\ell}{i+1}\al_i} (n^{-\psi_m})^{2(j-m-2)}. 
\end{align*}
Here, the factor $\prod_{i=q}^{\ell-1}p_i^{-\binom{\ell}{i+1}}$ is
added due to the  double count of the common
$(\ell-1)$-face. Substituting this bound in
\eqref{e:decomp.var.S.j.l.in.0.q}, 
$$
n^{2j-\ell} \sum_{K: |K|=j} \sum_{K': |K'|=j} \P \big(A_\ell(K,K')\big) \le C_j n^{2(\taual)-\tau_{\ell-1}} (n^{1-\psi_m})^{2(j-m-2)}. 
$$

We next consider the case $\ell\in \{ m+2,\dots,j-1 \}$.  Once again,
we bound the final expression in \eqref{e:decomp.var.S.j.l.in.0.q}. 
In this case, the maximum of $\P \big(A_\ell(K,K')\big)$ is attained
when $\sigma$ and $\tau$ have $\ell-m$ faces of dimension $m$ in
common, such that these $m$-faces are strongly connected, in a way
that each of the $m$-faces shares $m$ points with at least one of the
remaining  $m$-faces. Taking this  into consideration, we  conclude that 
\begin{align}  
\begin{split} \label{e:many.overlapping}
\P \big(A_\ell(K,K')\big) &\le C_j \Big\{ \prod_{i=q}^m p_i^{\binom{m+2}{i+1}} \Big( \prod_{i=q}^m p_i^{\binom{m}{i}} \Big)^{j-m-2} \Big\}^2 \prod_{i=q}^m p_i^{-\binom{m+1}{i+1}} \Big( \prod_{i=q}^m p_i^{-\binom{m}{i}} \Big)^{\ell-m-1} \\
&= C_j n^{-2\sum_{i=q}^m \binom{m+2}{i+1}\al_i + \sum_{i=q}^m \binom{m+1}{i+1}\al_i } (n^{-\psi_m})^{2j-m-\ell-3}. 
\end{split}
\end{align}
We refer the reader to  the proof of \cite[Theorem
4.2]{owada:samorodnitsky:thoppe:2021} for  a  more detailed derivation
of  \eqref{e:many.overlapping}. Using this estimate,  we see  that 
$$
n^{2j-\ell} \sum_{K: |K|=j} \sum_{K': |K'|=j} \P \big(A_\ell(K,K')\big)\le C_j n^{2(\taual)-\tau_m} (n^{1-\psi_m})^{2j-m-\ell-3}. 
$$

Finally, for  $\ell=j$, a direct application of Lemma
\ref{l:estimate.st.conn} gives us 
\begin{align*}
&\binom{n}{j} \binom{j}{j} \binom{n-j}{0} \big\{ \E[\xi_\sigma] - \big(\E[\xi_\sigma]\big)^2 \big\}  \\ 
\le& n^j \sum_{K: |K|=j} \P\big( X(\sigma,\bp)  \ \text{contains
     a subcomplex isomorphic to}\ K \big)\\
\le& C_j n^j \prod_{i=q}^m p_i^{\binom{m+2}{i+1}} \Big( \prod_{i=q}^m p_i^{\binom{m}{i}} \Big)^{j-m-2} 
=C_j n^\taual (n^{1-\psi_m})^{j-m-2}. 
\end{align*}
Putting all of these results together, we obtain 
\begin{align*}
\text{Var}(S_j) &\le C_j \Big\{  \bigvee_{\ell=q+1}^{m+1} n^{2(\taual)-\tau_{\ell-1}} (n^{1-\psi_m})^{2(j-m-2)} \\
&\qquad \qquad \vee \bigvee_{\ell=m+2}^{j-1} n^{2(\taual)-\tau_m} (n^{1-\psi_m})^{2j-m-\ell-3}  \vee n^{\taual}(n^{1-\psi_m})^{j-m-2} \Big\}  \\
&\le C_j \big\{ n^{2(\taual)-\tau_q} \vee n^\taual \big\}. 
\end{align*}
The last inequality follows from \eqref{e:taum.and.taum+1} and $\psi_m>1$. 
\medskip


\noindent \textit{Proof of \eqref{e:Var.T.m+2} and \eqref{e:Var.T.jr}}: For $j\ge m+2$ and $r\ge1$,  dropping the superscript from $\eta_\sigma^{(j,r,m)}$ as before,  we have that 
$$
\text{Var}(T_{j,r}) = \sum_{\ell=0}^j \binom{n}{j} \binom{j}{\ell} \binom{n-j}{j-\ell} \big\{ \E[\eta_{\sigma_j}\eta_{\tau_j}] - \E[\eta_{\sigma_j}]\E[\eta_{\tau_j}] \big\}\, \one \{ |\sigma_j \cap \tau_j|=\ell \} =: \sum_{\ell=0}^j \mT_{\ell,j}. 
$$ 
The following estimates were established in  the proof of \cite[Theorem
4.2]{owada:samorodnitsky:thoppe:2021}: 
\begin{equation}  \label{e:mT.ell}
\mT_{\ell,j} = \begin{cases}
0 & \text{ if } \ell\in\{ 0,\dots,q-2 \}, \\[3pt]
\mathcal O\big( n^{2(\taual)-\tau_q} (n^{1-\psi_m})^{2(j-m)-2} \big) & \text{ if } \ell=q-1, \\[3pt]
\mathcal O\big( n^{2(\taual)-\tau_q} (n^{1-\psi_m})^{2(j-m)-3} \big) & \text{ if } \ell=q, \\[3pt]
\mathcal O\big( n^{2(\taual)-\tau_{\ell-1}} (n^{1-\psi_m})^{2(j-m)-4} \big) & \text{ if } \ell\in \{  q+1,\dots,m+1\}, \\[3pt]
\mathcal O\big( n^{2(\taual)-\tau_m} (n^{1-\psi_m})^{2j-m-\ell-3} \big) & \text{ if } \ell\in \{ m+2,\dots,j-1\}, \\[3pt]
\mathcal O\big( n^{\taual} (n^{1-\psi_m})^{j-m-2} \big) & \text{ if } \ell=j, \\[3pt]
\end{cases}
\end{equation}
where $\beta_n = \mathcal O(\gamma_n)$ means that there exists a $j$-dependent $C_j>0$ such that
$\beta_n/\gamma_n \le C_j$ for all $n\ge1$. 
Recalling \eqref{e:taum.and.taum+1}, it follows from 
\eqref{e:mT.ell}  that $\text{Var}(T_{j,r}) \le C_j (a_n \vee b_n)$, 
where 
\begin{align*}
a_n &:= \bigvee_{\ell=q+1}^{m+1} n^{2(\taual)-\tau_{\ell-1}} (n^{1-\psi_m})^{2(j-m)-4}, \\
b_n &:= n^\taual (n^{1-\psi_m})^{j-m-2}. 
\end{align*}
If $j=m+2$, then   by \eqref{e:taum.and.taum+1}, $a_n \vee b_n =
n^{2(\taual)-\tau_q} \vee n^\taual$, proving \eqref{e:Var.T.m+2}.
If $j\ge m+3$ and $r\ge1$, then 
\begin{align*}
a_n &\le \big\{  n^{2(\taual)-\tau_q}  \vee n^{2(\taual)-\tau_m} \big\}\, n^{1-\psi_m}, \\
 b_n &\le n^{\taual+1-\psi_m}, 
\end{align*}
so that 
$$
a_n   \vee b_n \le \big\{ n^{2(\taual)-\tau_q} \vee n^\taual \big\}\,
n^{1-\psi_m}, 
$$
and \eqref{e:Var.T.jr} follows. 
\medskip

\noindent \textit{Proof of \eqref{e:Var.V.m+2-T.m+2}}: We use the same 
style of  notation as in \eqref{e:decomp.var.S.m+2}. We have 
\begin{align*}
\text{Var}(V_{m+2}-T_{m+2}) &=\sum_{\ell=0}^{m+2}\binom{n}{m+2}\binom{m+2}{\ell} \binom{n-m-2}{m+2-\ell}  \\
&\times \Big\{  \E\big[ (\lambda_{\sigma_{m+2}}
                                                                                                               -\eta_{\sigma_{m+2}})(\lambda_{\tau_{m+2}}-\eta_{\tau_{m+2}}) \big] - \E[\lambda_{\sigma_{m+2}}-\eta_{\sigma_{m+2}}]\E[\lambda_{\tau_{m+2}}-\eta_{\tau_{m+2}}] \Big\}  \\
  &\hskip 3.5in \times \one \{ |\sigma_{m+2} \cap \tau_{m+2}|=\ell \} \\
&=\sum_{\ell=0}^{m+2}\binom{n}{m+2}\binom{m+2}{\ell}
  \binom{n-m-2}{m+2-\ell}          (A_n-2B_n +C_n) \one \{ |\sigma_{m+2} \cap \tau_{m+2}|=\ell \} 
\\
&:= \sum_{\ell=0}^{m+2} \U_\ell, 
\end{align*}
where 
\begin{align*}
A_n& :=  \E[\lambda_{\sigma_{m+2}}\lambda_{\tau_{m+2}}]
     -\E[\lambda_{\sigma_{m+2}}]\E[\lambda_{\tau_{m+2}}],  \ \ \ B_n :=
      \E[\lambda_{\sigma_{m+2}}\eta_{\tau_{m+2}}]
     -\E[\lambda_{\sigma_{m+2}}]\E[\eta_{\tau_{m+2}}], \\
      C_n& := \E[\eta_{\sigma_{m+2}}\eta_{\tau_{m+2}}]
     -\E[\eta_{\sigma_{m+2}}]\E[\eta_{\tau_{m+2}}]. 
\end{align*}
Let $K$ be the  non-trivial $m$-cycle on $m+2$ points. For $\sigma \subset [n]$ with $|\sigma |=m+2$, define 
$$
A(K) := \big\{  X(\sigma,\bp)  \text{ is isomorphic to } K  \big\}, 
$$
so that 
\begin{equation}  \label{e:def.qK}
q_K:= \P(A(K)) = \prod_{i=q}^m p_i^{\binom{m+2}{i+1}}(1-p_{m+1}). 
\end{equation}
Let $\sigma,\tau \subset [n]$ with $|\sigma |=|\tau|=m+2$. 
We break our discussion into four different cases, depending on the
value of $\ell=|\sigma \cap \tau|$. We continue using the notation
\eqref{e:def.AKK'}.  
\medskip

\noindent \textit{\underline{Case I}}: $\ell \in \{0,\dots, q-1\}$. \\
Since all faces of dimension smaller than $q$ exist in $X([n],\bp)$ with probability
$1$, we have $\E[\lambda_\sigma \lambda_\tau] = q_K^2 =
\E[\lambda_\sigma]\E[\lambda_\tau]$,
so  that $A_n=0$. Moreover, 
$$
\E[\lambda_\sigma\eta_\tau] = \P\big( A_\ell(K,K) \big)\P\big(\tau \text{ is maximal } | \,A_\ell(K,K)\big). 
$$
Here, it is clear  that $\P\big( A_\ell(K,K) \big)=q_K^2$.
Furthermore, the event $\{\tau \text{ is maximal}\}$ is, clearly,
independent of the restriction of the complex to $\sigma$. It thus follows 
that $B_n=0$.   We conclude by \eqref{e:mT.ell} with $j=m+2$ that 
\begin{equation}  \label{e:U.ell.0.q-1}
\U_\ell = \mT_{\ell,m+2} = \begin{cases}
0 & \text{ if } \ell\in\{ 0,\dots,q-2 \}, \\[3pt]
\mathcal O \big( n^{2(\taual)-\tau_q +2(1-\psi_m)} \big) & \text{ if }
\ell=q-1. 
\end{cases}
\end{equation}

\medskip

\noindent \textit{\underline{Case II}}: $\ell =q$. \\
Since all the $(q-1)$-faces exist with probability $1$, we still have
$\E[\lambda_\sigma \lambda_\tau] =
\E[\lambda_\sigma]\E[\lambda_\tau]$, and, hence,  $A_n=0$.  Next,   
$$
\E[\lambda_\sigma \eta_\tau] = q_K^2 \P \big( \tau \text{ is maximal } |\, A_q(K,K) \big) \le q_K^2 \P\Big( \bigcap_{v\in (\sigma \cap \tau)^c} B_v^c  \,\Big|\, A_q(K,K) \Big), 
$$
where  $B_v$ is the event  that $v$ forms an $m$-face with one of the
$(m-1)$-faces in $\tau$. Using independence and the argument   in
\eqref{e:E.T.m+2.downarrow}, we have 
\begin{align*}
\P\Big( \bigcap_{v\in (\sigma \cap \tau)^c} B_v^c \, \Big|\, A_q(K,K) \Big) &= \prod_{v\in (\sigma \cap \tau)^c} \Big( 1-\P\big(B_v\, |\, A_q(K,K)  \big) \Big) \\
&=\Big( 1-\binom{m+2}{m}n^{-\psi_m} +u_n  \Big)^{n-2(m+2)+q}, 
\end{align*}
where $u_n = o(n^{-\psi_m})$. Therefore, 
\begin{align*}
B_n&\le q_K^2 \Big( 1-\binom{m+2}{m}n^{-\psi_m} +u_n  \Big)^{n-2(m+2)+q} \hspace{-10pt}- q_K^2 \Big( 1-\binom{m+2}{m}n^{-\psi_m} +u_n  \Big)^{n-m-2}  \le C^* q_K^2 n^{-\psi_m}, 
\end{align*}
and, by \eqref{e:def.qK},
\begin{align}
\begin{split}  \label{e:Cn.bound}
\binom{n}{m+2}\binom{m+2}{q}\binom{n-m-2}{m+2-q} B_n &\le C^* n^{2(m+2)-q} q_K^2 n^{-\psi_m}  \le C^* n^{2(\taual)-\tau_q+1-\psi_m}. 
\end{split}
\end{align}
Finally, by \eqref{e:mT.ell}, 
\begin{equation}  \label{e:An.bound}
\binom{n}{m+2}\binom{m+2}{q}\binom{n-m-2}{m+2-q} C_n  =\mT_{q,m+2} = \mathcal O \big( n^{2(\taual)-\tau_q+1-\psi_m} \big). 
\end{equation}
Combining \eqref{e:Cn.bound} and \eqref{e:An.bound} gives us 
\begin{equation}  \label{e:U.ell.q}
\U_q \le C^* n^{2(\taual)-\tau_q+1-\psi_m}. 
\end{equation}

\noindent \textit{\underline{Case III}}: $\ell\in \{ q+1,\dots,m+1 \}$. \\
It follows from $\lambda_\sigma \ge \eta_\sigma$ and $\lambda_\tau \ge \eta_\tau$ that 
\begin{align*}
\U_\ell &\le n^{2(m+2)-\ell} \E \big[ (\lambda_{\tau} -\eta_{\tau})\lambda_{\sigma} \big] \\ 
&=n^{2(m+2)-\ell}\P\big( A_\ell(K,K) \big)\P\big( \tau \text{ is not maximal }\big|\, A_\ell(K,K) \big). 
\end{align*}
Since there are $\ell$ common points between $\sigma$ and $\tau$, 
\begin{align*}
\P\big( A_\ell(K,K) \big) &=\prod_{i=q}^m p_i^{\binom{m+2}{i+1}} (1-p_{m+1}) \times \prod_{i=q}^m p_i^{\binom{m+2}{i+1}-\binom{\ell}{i+1}} (1-p_{m+1}) \le \prod_{i=q}^m p_i^{2\binom{m+2}{i+1}-\binom{\ell}{i+1}}.
\end{align*}
Suppose first that $\ell\in \{q+1,\dots,m-1\}$. 
By the union bound, the probability that $\tau$ is not  maximal,
conditioned on $A_\ell(K,K)$, is bounded by $\binom{m+2}{m}$ times the
conditional probability that there exists at least one $m$-face
between a fixed $(m-1)$-face in $\tau$ and any  points outside of
$\tau$. The conditional probability of forming an $m$-face from an
$(m-1)$-face in $\tau$ and a point in $[n]\setminus (\tau\cup \sigma)$
is given by $\prod_{i=q}^m p_i^{\binom{m}{i}}=n^{-\psi_m}$, and there are 
$n-2(m+2)+\ell$ such points. On the other hand, if the candidate
point is in $\sigma \setminus \tau$, it suffices to consider the $(m-1)$-face in $\tau$ containing all $\ell$ common points
between $\sigma$ and $\tau$. Then the
above conditional probability of forming an $m$-face is
$$
\prod_{i=q}^m p_i^{\binom{m}{i}} \left( \prod_{i=q}^\ell p_i^{\binom{\ell}{i}}\right)^{-1}
= n^{\psi_\ell-\psi_m}, 
$$
which  is further bounded by $n^{(1\vee \psi_{m-1})-\psi_m}$.

On the other hand, if $\ell\in\{m,m+1\}$, then 
  $ \P\big( \tau \text{ is not maximal }\big|\, A_\ell(K,K) \big) =1$.
We thus  conclude that 
\begin{equation}  \label{e:U.ell.q+1.m+1}
\U_\ell \le C^* n^{2(m+2)-\ell} \prod_{i=q}^m
p_i^{2\binom{m+2}{i+1}-\binom{\ell}{i+1}} n^{(1\vee\psi_{m-1})-\psi_m}
= C^* n^{2(\taual)-\tau_{\ell-1}+(1\vee\psi_{m-1})-\psi_m}
\end{equation}
for $\ell\in\{q+1,\ldots, m-1\}$ and
\begin{equation}  \label{e:U.ell.m.m+1}
  \U_\ell \leq C^* n^{2(\taual)-\tau_{\ell-1}} \le C^* n^{2(\taual)-\tau_{m}}
  \end{equation}
for $\ell\in\{m,m+1\}$. 

\noindent \textit{\underline{Case IV}}: $\ell=m+2$. \\
In this case $\sigma=\tau$ and, as above, 
\begin{align}
\begin{split}  \label{e:U.ell.m+2}
\U_{m+2} &\le n^{m+2} \E[\lambda_\tau-\eta_\tau] =n^{m+2} q_K \P ( \tau \text{ is not maximal } |\, A(K) ) \\
&\le C^* n^{m+2} \prod_{i=q}^m p_i^{\binom{m+2}{i+1}} n^{1-\psi_m} =C^* n^{\taual +1 -\psi_m}. 
\end{split}
\end{align}
\medskip
Combining all the results in \eqref{e:U.ell.0.q-1}, \eqref{e:U.ell.q},
\eqref{e:U.ell.q+1.m+1}, \eqref{e:U.ell.m.m+1} 
and \eqref{e:U.ell.m+2}, we  conclude  that   
\begin{align*}
&\text{Var}(V_{m+2}-T_{m+2}) \le C^* \Big(  n^{2(\taual)-\tau_q+2(1-\psi_m)} \vee n^{2(\taual)-\tau_q+1-\psi_m} \\
&\hskip 1in \vee \bigvee_{\ell=q+1}^{m-1} n^{2(\taual)-\tau_{\ell-1}+(1\vee \psi_{m-1})-\psi_m} 
\vee n^{2(\taual)-\tau_m}
 \vee n^{\taual +1-\psi_m} \Big) \\
&\hskip 1in \le C^* \big( n^{2(\taual)-\tau_q} \vee n^\taual \big) \,
n^{(1\vee \psi_{m-1})-\psi_m}.  
\end{align*}
\end{proof}


\begin{thebibliography}{10}

\bibitem{ballmann:swiatkowski:1997}
W.~Ballmann and J.~\'Swi\c{a}tkowski.
\newblock On {$L^2$}-cohomology and property ({T}) for automorphism groups of
  polyhedral cell complexes.
\newblock {\em Geometric and Functional Analysis}, 7:615--645, 1997.

\bibitem{barbour:karonski:rucinski:1989}
A.~Barbour, M.~Karo\'nski, and A.~Ruci\'nski.
\newblock A central limit theorem for decomposable random variables with
  applications to random graphs.
\newblock {\em Journal of Combinatorial Theory, Series B}, 47:125--145, 1989.

\bibitem{costa:farber:2016}
A.~Costa and M.~Farber.
\newblock Large random simplicial complexes, {I}.
\newblock {\em Journal of Topology and Analysis}, 8, 2016.

\bibitem{costa:farber:2017}
A.~Costa and M.~Farber.
\newblock Large random simplicial complexes, {I}{I}{I} the critical dimension.
\newblock {\em Journal of Knot Theory and Its Ramifications}, 26, 2017.

\bibitem{fowler:2019}
C.~F. Fowler.
\newblock Homology of multi-parameter random simplicial complexes.
\newblock {\em Discrete \& Computational Geometry}, 62:87--127, 2019.

\bibitem{fraiman:mukherjee:thoppe:2023}
N.~Fraiman, S.~Mukherjee, and G.~Thoppe.
\newblock The shadow knows: empirical distributions of minimum spanning acycles
  and persistence diagrams of random complexes.
\newblock To appear in \textit{Discrete Analysis}, 2023.

\bibitem{hoffman:kahle:paquette:2021}
C.~Hoffman, M.~Kahle, and E.~Paquette.
\newblock Spectral gaps of random graphs and applications.
\newblock {\em International Mathematics Research Notices}, 2021:8353--8404,
  2021.

\bibitem{janson:warnke:2016}
S.~Janson and L.~Warnke.
\newblock The lower tail: {P}oisson approximation revisited.
\newblock {\em Random Structures \& Algorithms}, 48:219--246, 2016.

\bibitem{kahle:2009}
M.~Kahle.
\newblock Topology of random clique complexes.
\newblock {\em Discrete Mathematics}, 309:1658--1671, 2009.

\bibitem{kahle:meckes:2013}
M.~Kahle and E.~Meckes.
\newblock Limit theorems for {B}etti numbers of random simplicial complexes.
\newblock {\em Homology, Homotopy and Applications}, 15:343--374, 2013.

\bibitem{kanazawa:2022}
S.~Kanazawa.
\newblock Law of large numbers for betti numbers of homogeneous and spatially
  independent random simplicial complexes.
\newblock {\em Random Structures \& Algorithms}, 60:68--105, 2022.

\bibitem{linial:meshulam:2006}
N.~Linial and R.~Meshulam.
\newblock Homological connectivity of random 2-complexes.
\newblock {\em Combinatorica}, 26:475--487, 2006.

\bibitem{meester:roy:1996}
R.~Meester and R.~Roy.
\newblock {\em Continuum Percolation}.
\newblock Cambridge University Press, 1996.

\bibitem{meshulam:wallach:2009}
R.~Meshulam and N.~Wallach.
\newblock Homological connectivity of random $k$-dimensional complexes.
\newblock {\em Random Structures \& Algorithms}, 34:408--417, 2009.

\bibitem{owada:2022}
T.~Owada.
\newblock Convergence of persistence diagram in the sparse regime.
\newblock {\em The Annals of Applied Probability}, 32:4706--4736, 2022.

\bibitem{owada:samorodnitsky:thoppe:2021}
T.~Owada, G.~Samorodnitsky, and G.~Thoppe.
\newblock Limit theorems for topological invariants of the dynamic
  multi-parameter simplicial complex.
\newblock {\em Stochastic Processes and Their Applications}, 138:56--95, 2021.

\bibitem{owada:wei:2022}
T.~Owada and Z.~Wei.
\newblock Functional strong law of large numbers for {B}etti numbers in the
  tail.
\newblock {\em Extremes}, 25:653--693, 2022.

\bibitem{penrose:2003}
M.~D. Penrose.
\newblock {\em Random Geometric Graphs, Oxford Studies in Probability 5}.
\newblock Oxford University Press, Oxford, 2003.

\bibitem{samorodnitsky:owada:2023}
G.~Samorodnitsky and T.~Owada.
\newblock Large deviations for subcomplex counts and {B}etti numbers in
  multi-parameter simplicial complexes.
\newblock To appear in \textit{Random Structures \& Algorithms}, 2023.

\bibitem{skraba:thoppe:yogeshwaran:2020}
P.~Skraba, G.~Thoppe, and D.~Yogeshwaran.
\newblock Randomly weighted $d$-complexes: minimal spanning acycles and
  persistence diagrams.
\newblock {\em The Electronic Journal of Combinatorics}, 27, 2020.

\bibitem{thoppe:yogeshwaran:adler:2016}
G.~C. Thoppe, D.~Yogeshwaran, and R.~J. Adler.
\newblock On the evolution of topology in dynamic clique complexes.
\newblock {\em Advances in Applied Probability}, 48:989--1014, 2016.

\end{thebibliography}

\end{document}